
\documentclass{article}
\usepackage{amsmath}
\usepackage{amsfonts}
\usepackage{amssymb}
\usepackage{graphicx}
\usepackage{fancyhdr}%
\usepackage{xcolor}%
\usepackage[titletoc,title]{appendix}
\usepackage{combelow}
\usepackage{url}

\setcounter{MaxMatrixCols}{30}
\providecommand{\U}[1]{\protect\rule{.1in}{.1in}}
\numberwithin{equation}{section}
\newtheorem{theorem}{Theorem}[section]

\newtheorem{definition}[theorem]{Definition}

\newtheorem{lemma}[theorem]{Lemma}

\newtheorem{proposition}[theorem]{Proposition}
\newtheorem{remark}[theorem]{Remark}

\newenvironment{proof}[1][Proof]{\noindent\textbf{#1.} }{\ \rule{0.5em}{0.5em}}
\begin{document}

\title{On the Trace of $\dot W_{a}^{m+1,1}(\mathbb{R}_{+}^{n+1})$}
\author{Giovanni Leoni\\Department of Mathematical Sciences, \\Carnegie Mellon University, \\Pittsburgh PA 15213-3890, USA
\and Daniel Spector\\Department of Mathematics,\\National Taiwan Normal University, \\Taipei City, Taiwan 116, R.O.C.\\
and \\
National Center for Theoretical Sciences\\No. 1 Sec. 4 Roosevelt Rd., National Taiwan
University\\Taipei, 106, Taiwan}
\maketitle

\begin{abstract}
In this paper we prove extension results for functions in Besov spaces.  Our results are new in the homogeneous setting, while our technique applies equally in the inhomogeneous setting to obtain new proofs of classical results.  While our results include $p>1$, of principle interest is the case $p=1$, where we show that
\begin{equation*}
\int_{\mathbb{R}_{+}^{n+1}}t^{a}|\nabla^{m+1}u(x,t)|\;dtdx\lesssim\left\vert
f\right\vert _{B^{m-a,1}(\mathbb{R}^{n})}
\end{equation*}
for all $f \in \dot{B}^{m-a,1}(\mathbb{R}^{n})$ (the homogeneous Besov space) where $u$ is a suitably scaled heat extension of $f$.
\end{abstract}

\section{Introduction}\label{intro}

In the classical paper \cite{gagliardo1957}, Gagliardo proved that when
$1<p<\infty$, the trace space of $W^{1,p}(\Omega)$ is the fractional Sobolev
space $W^{1-1/p,p}(\partial\Omega)$ (see \cite{leoni-book-fractional}). Here,
$\Omega$ is an open bounded set of $\mathbb{R}^{n+1}$ with smooth boundary. An induction argument gives that the trace space of
$W^{m+1,p}(\Omega)$ is $W^{m+1-1/p,p}(\partial\Omega)$ for $m\in\mathbb{N}$
and $1<p<\infty$.

The history is somewhat more involved when $p=1$. In the first order case, Gagliardo proved in the same paper that the trace space of $W^{1,1}(\Omega)$ is $L^{1}(\partial\Omega)$ (see also \cite{mironescu2015} or \cite[Theorem 18.13]{leoni-book-sobolev} for a simpler proof due to Mironescu).  However, in the higher order case the trace of $W^{m+1,1}(\Omega)$ for
$m\in\mathbb{N}$ is not $W^{m,1}(\partial\Omega)$. Indeed, Uspenski\u{\i}
\cite{uspenskii1970} considered the homogeneous weighted Sobolev spaces
$\dot{W}_{a}^{m+1,p}(\mathbb{R}_{+}^{n+1})$, defined as the space of all
functions $u\in W_{\operatorname*{loc}}^{m+1,p}(\mathbb{R}_{+}^{n+1})$ such
that%
\[
|u|_{W_{a}^{m+1,p}(\mathbb{R}_{+}^{n+1})}:=\left(
\int_{\mathbb{R}^{n+1}_{+}}t^{a}|\nabla^{m+1}u(x,t)|^{p}dxdt\right)  ^{1/p}<\infty,
\]
where 
$$(x,t)\in\mathbb{R}^{n}\times (0,\infty)=:\mathbb{R}^{n+1}_{+},$$
$m\in\mathbb{N}_{0}$, $a>-1$, and $1<p<\infty$, and where we use
$\nabla^{m+1}u:=\nabla^{m}(\nabla u)$ to denote the inductively defined higher
order gradient. He proved that when $a-p(m+1)+1<0$, the trace space of the \emph{inhomogeneous} Sobolev space $\dot
{W}_{a}^{m+1,p}(\mathbb{R}_{+}^{n+1})\cap L^{p}(\mathbb{R}_{+}^{n+1})$ is
given by the Besov space $B^{m+1-(a+1)/p,p}(\mathbb{R}^{n})$, that is,
\begin{equation}
\operatorname*{Tr}(\dot{W}_{a}^{m+1,p}(\mathbb{R}_{+}^{n+1})\cap
L^{p}(\mathbb{R}_{+}^{n+1}))=B^{m+1-(a+1)/p,p}(\mathbb{R}^{n}%
).\label{trace space}%
\end{equation} 


As noted in the literature (see, e.g., \cite[p.295]{burenkov-book1998}), \cite[p.515]{mazya-book2011}, \cite{mironescu-russ-2015}), while Uspenski\u{\i}'s result is only stated for $1<p<\infty$, his proof extends\footnote{The reader should note here there is a slight inaccuracy in the assertion of how to handle the estimate for $r-1$ odd in the case division on p.~137, but ultimately this case is not needed for the demonstration of his Theorem 3.} without modification to the case $p=1$.  In particular, Uspenski\u{\i}'s trace theorem \cite[Theorem 2]{uspenskii1970} gives 
\begin{equation}
\operatorname*{Tr}(\dot{W}_{a}^{m+1,1}(\mathbb{R}_{+}^{n+1})\cap
L^{1}(\mathbb{R}_{+}^{n+1}))\subseteq
B^{m-a,1}(\mathbb{R}^{n})\label{inclusion easy}%
\end{equation}
(see also \cite{demengel-1984}, \cite[Theorem 18.57]{leoni-book-sobolev} and
\cite{mironescu-russ-2015}), while his lifting theorem \cite[Theorem 3]{uspenskii1970} shows
\begin{equation}
B^{m-a,1}(\mathbb{R}^{n}) \subseteq \operatorname*{Tr}(\dot{W}_{a}^{m+1,1}(\mathbb{R}_{+}^{n+1})\cap
L^{1}(\mathbb{R}_{+}^{n+1})).
\label{inclusion hard}%
\end{equation}
Both directions of Uspenski\u{\i}'s argument are a little tricky, though their presentation has been streamlined by Maz'ya in \cite[Theorem 1 in Section 10.1]{mazya-book2011}.  As was observed by Mironescu and Russ \cite{mironescu-russ-2015}, the lifting argument in \cite{mazya-book2011} is missing the estimate for the cross term (second order mixed derivatives in the trace and normal variable). 
 This is a natural motivation for their work \cite{mironescu-russ-2015}, where utilizing Littlewood--Paley theory, they give a
simple proof of the equality \eqref{trace space} that includes the
case $p=1$, see \cite[Theorems 1.1 and 1.2]{mironescu-russ-2015}.
Accordingly, their paper makes use of the Littlewood--Paley characterization
of Besov spaces (see \cite[Theorem 17.77]{leoni-book-sobolev}). 

In this paper, we are interested in the question of extension of functions in \emph{homogeneous} Besov spaces, which arise naturally in the study of PDE on unbounded domains.  Our approach is simple and has the benefit of being clearly well-defined in both the inhomogeneous and homogeneous setting.  The main new idea in this paper is to replace Uspenski\u{\i}'s use of the harmonic extension of $f$ with
\begin{equation}
u(x,t):=(W_{t}\ast f)(x)=\int_{\mathbb{R}^{n}}W_{t}(x-y)f(y)\,dy,
\label{function u}%
\end{equation}
where $W$ is the Gaussian function%
\begin{equation}
W(x):=\frac{\exp(-|x|^{2}/4)}{(4\pi)^{n/2}},\quad W_{t}(x):=\frac{1}{t^{n}%
}W(xt^{-1})=\frac{\exp(-|x|^{2}/(4t^2))}{(4\pi t^{2})^{n/2}}.
\label{gaussian}%
\end{equation}
While this rescaled Gauss-Weierstrauss extension has been utilized previously in the literature (see, e.g. Taibleson \cite[p.~458]{taibleson1964}), it is by no means obvious that it gives a relatively simple resolution of the extension question.  A first clear gain is that the convolution \eqref{function u} is always well-defined for functions $f \in \dot{B}^{m-a,1}(\mathbb{R}^{n})$, in contrast to the harmonic extension utilized by Uspenski\u{\i}.  The main point of interest is a simplification in the estimates which comes from a different underlying PDE.  In particular, as $p(x,t)=W_{\sqrt{t}}(x)$ is the heat kernel, $\frac{\partial p}{\partial t}=\Delta p$. Hence, using the chain rule, or a
direct computation, we have that
\begin{equation}
\frac{\partial W_{t}}{\partial t}=2t\Delta W_{t}, \label{heat equation}%
\end{equation}
so that $u$ satisfies the degenerate parabolic initial value problem%
\[
\left\{
\begin{array}
[c]{cc}%
\frac{\partial u}{\partial t}=2t\Delta u & \text{in }\mathbb{R}_{+}^{n+1},\\
u(x,0)=f(x) & \text{in }\mathbb{R}^{n}.
\end{array}
\right.
\]
We will see shortly the usefulness of this relation.  We first state the main result of this paper in
\begin{theorem}
\label{theorem higher order}Let $m\in\mathbb{N}_{0}$ and $-1<a<m$. Suppose
that $f\in \dot{B}^{m-a,1}(\mathbb{R}^{n})$ and let $u$ be given by
\eqref{function u}. Then
\begin{equation}
\int_{\mathbb{R}_{+}^{n+1}}t^{a}|\nabla^{m+1}u(x,t)|\;dtdx\lesssim\left\vert
f\right\vert _{B^{m-a,1}(\mathbb{R}^{n})}.\label{main estimate}%
\end{equation}
\end{theorem}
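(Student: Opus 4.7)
The overall strategy is to trade time derivatives of $u$ for spatial ones via the parabolic identity $\partial_t u = 2t\Delta u$, and then to control each pure spatial--derivative integral by the Besov seminorm using the vanishing moments of $\nabla_x^N W_t$ together with the finite-difference characterization of $\dot{B}^{s,1}$.

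More concretely, since $u(\cdot,t)\in C^\infty(\mathbb{R}^n)$ for $t>0$, every component of $\nabla^{m+1}u$ is a mixed derivative $\partial_t^{k}\nabla_x^{\alpha}u$ with $k+|\alpha|=m+1$. An induction on $k$ using \eqref{heat equation} produces the expansion
\[
\partial_t^{k}u \;=\; \sum_{j=\lceil k/2\rceil}^{k} c_{j,k}\, t^{2j-k}\, \Delta^{j}u,
\]
in which every power of $t$ is nonnegative (would-be negative powers cancel thanks to a vanishing factor $2j-k=0$ that appears at each inductive step). Setting $N:=2j+|\alpha|$ and $b:=a+2j-k$, one checks that $N-1-b=m-a$ is invariant, that $b\ge a>-1$, and that $b<N-1$ iff $a<m$. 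Hence \eqref{main estimate} reduces to proving
\[
\int_{\mathbb{R}^{n+1}_{+}} t^{b}\,|\nabla_{x}^{N}u(x,t)|\,dx\,dt \;\lesssim\; |f|_{\dot{B}^{s,1}(\mathbb{R}^n)}
\]
for every integer $N\ge 1$ and every $b$ with $-1<b<N-1$, where $s:=N-1-b$.

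For this pure--spatial estimate, I would write $\nabla_{x}^{N}u(x,t)=(\nabla_{x}^{N}W_{t})\ast f(x)$. Repeated integration by parts, which is legitimate because $W_{t}$ is Schwartz, yields the moment conditions $\int (\nabla_{x}^{N}W_{t})(y)\,y^{\beta}\,dy=0$ for all $|\beta|\le N-1$. Combined with the $y\leftrightarrow -y$ symmetry of $W$, these cancellations allow one to rewrite the convolution as an integral against an $N$-th order symmetric finite difference of $f$,
\[
(\nabla_{x}^{N}W_{t})\ast f(x) \;=\; \int_{\mathbb{R}^n} G_{N}(y,t)\,\Delta_{y}^{N}f(x)\,dy, \qquad G_{N}(y,t)=t^{-n-N}\,\Phi_{N}(y/t),
\]
with $\Phi_{N}$ Schwartz. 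Obtaining this representation is the main obstacle: for even $N$ it falls out of symmetrizing $y\mapsto -y$ and using $\int\nabla^{N}W_{t}=0$, but for odd $N$ one has to combine the odd parity of $\nabla^{N}W_{t}$ with a Taylor--remainder argument that trades the missing parity for the next order of moment cancellation.

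Granted the representation, the conclusion is a Fubini--scaling computation. Taking $L^{1}_{x}$ and swapping integrals gives
\[
\int_{0}^{\infty} t^{b}\,\|\nabla_{x}^{N}u(\cdot,t)\|_{L^{1}}\,dt \;\le\; \int_{\mathbb{R}^n}\|\Delta_{y}^{N}f\|_{L^{1}} \Bigl(\int_{0}^{\infty} t^{b}\,|G_{N}(y,t)|\,dt\Bigr) dy,
\]
and the substitution $r=|y|/t$, together with the Schwartz decay of $\Phi_{N}$, evaluates the inner $t$--integral as a constant multiple of $|y|^{b-n-N+1}=|y|^{-n-s}$. The proof then concludes by invoking the classical finite-difference characterization $|f|_{\dot{B}^{s,1}(\mathbb{R}^n)}\sim \int_{\mathbb{R}^n} |y|^{-n-s}\,\|\Delta_{y}^{N}f\|_{L^{1}(\mathbb{R}^n)}\,dy$, which is valid for any integer $N>s$.
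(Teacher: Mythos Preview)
Your first reduction---expanding $\partial_t^{k}u=\sum_j c_{j,k}\,t^{2j-k}\Delta^{j}u$ via $\partial_t W_t=2t\Delta W_t$---matches the paper's mechanism exactly, and your bookkeeping $s=N-1-b=m-a$, $b>-1$, $b<N-1$ is correct.

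The gap is in the second half. You propose to write $(\nabla_x^{N}W_t)\ast f(x)=\int G_N(y,t)\,\Delta_y^{N}f(x)\,dy$ with $G_N$ Schwartz in $y/t$, and then to invoke the characterization $|f|_{\dot B^{s,1}}\sim\int|y|^{-n-s}\|\Delta_y^{N}f\|_{L^1}\,dy$ for arbitrary $N>s$. Both steps are genuine obstacles. The pointwise representation is easy only for $N=1$ (subtract $f(x)$) and $N=2$ (evenness plus zero mean, Uspenski\u{\i}'s trick); for $N\ge 3$ the phrase ``Taylor remainder trades parity for moment cancellation'' does not produce a formula of the claimed shape, and you correctly flag this as the main obstacle without resolving it. The higher-order difference characterization of $\dot B^{s,1}$ is a Marchaud-type theorem of comparable depth to what you are proving, so quoting it is circular in spirit.

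The paper avoids both issues. Rather than seek a high-order difference of $f$, it uses the semi-group identity $W_t=W_{t/\sqrt 2}\ast W_{t/\sqrt 2}$ to factor $\partial^{\gamma}W_t=\partial^{\gamma'}W_{t/\sqrt 2}\ast\partial_{x_ix_j}^{2}W_{t/\sqrt 2}$ and places $k-1$ (respectively $k$, when $s\notin\mathbb N$) derivatives directly onto $f$. The remaining convolution of $\partial_{x_ix_j}^{2}W_{t/\sqrt 2}$ against $\partial^{\alpha}f$ is even with zero mean, so Uspenski\u{\i}'s trick yields $\Delta_h^{2}(\partial^{\alpha}f)$ (respectively $\Delta_h(\partial^{\alpha}f)$), which is \emph{exactly} the quantity in the paper's definition of $|f|_{\dot B^{m-a,1}}$. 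No norm equivalence is needed.

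Finally, you do not address why $W_t\ast f$ is defined for $f$ in the \emph{homogeneous} space $\dot B^{m-a,1}(\mathbb R^n)$, where $f$ is only in $L^1_{\mathrm{loc}}$. The paper handles this separately by decomposing $f$ as an $L^p$ function plus a function of polynomial growth; this is in fact the reason the Gaussian is used rather than the Poisson kernel.
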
 
Note that when $a=0$ and $p=1$, this provides a lifting for $\dot{B}^{m,1}(\mathbb{R}^{n})$ into $\dot{W
}^{m+1,1}(\mathbb{R}^{n+1}_{+})$.

When $a$ is an integer, the idea of the proof of Theorem
\ref{theorem higher order} is based on Uspenski\u{\i}'s trick of introducing
second order differences and his use of repeated harmonic extension.   In particular, replacing the Poisson kernel with the Gauss-Weierstrauss kernel, Uspenski\u{\i}'s ansatz \cite[p.~137-138]{uspenskii1970}, reads 
\begin{align}
\frac{\partial^{2}u}{\partial x_{i}\partial x_{j}}(x,t)=\frac{1}{2}%
t^{-n-2}\int_{\mathbb{R}^{n}}\frac{\partial^{2}W_1}{\partial x_{i}\partial
x_{j}}(ht^{-1})[f(x+h)+f(x-h)-2f(x)]\,dh,\label{usp-prime}
\end{align}
for $i,j=1,\ldots, n$.  This relies on the fact that $\frac{\partial
^{2}W_1}{\partial x_{i}\partial x_j}$ is even and has mean zero, and allows one to introduce the appropriate quantity on the right hand side when one has exactly two pure second order derivatives.  The point is then that estimates for the entries of the tensor
\[
t^{a}\nabla^{m+1}u
\]
can be reduced to this case through the use of the identity \eqref{heat equation} and the semi-group property of the Gaussian.  The former allows one to directly trade derivatives in $t$ for derivatives in the trace variable, up to a polynomial in the normal variable that is harmless, which in combination with the latter reduces the question of estimate for $t^{a}\nabla^{m+1}u$ to estimates for linear combinations of
\[
t^{a+l}\frac{\partial^{\gamma^{\prime}}W_{t/\sqrt{2}}}{\partial x^{\gamma^{\prime}}%
}\ast\frac{\partial^{2}W_{t/\sqrt{2}}}{\partial x_{i}\partial x_{j}}\ast
\frac{\partial^{\alpha}f}{\partial x^{\alpha}}%
\]
for some $l\in\{0,\ldots,m+1\}$, and where the multi-indices $\alpha,\gamma
\in\mathbb{N}_{0}^{n}$ satisfy $|\alpha|=m-k-1$ and $|\gamma^{\prime}|=l-k$, with $k$ the integer part of $m-a$.  The two pure second order derivatives in the trace variable are then amenable to the analog of Uspenski\u{\i}'s ansatz \eqref{usp-prime}, and the estimate follows, where one uses rapid decay of the Gaussian to ensure convergence of several rescaled integrals in the estimates.

In the proof of Theorem \ref{theorem higher order}, we will show that a function $f\in \dot{B}^{m-a,1}(\mathbb{R}^{n})$ has at most polynomial growth, so that in particular \eqref{function u} is well-defined.  This is no longer the case if we replace $W_t$ with the Poisson kernel, as in Uspenski\u{\i}'s paper.  This is not an issue for the extensions utilized by Mironescu and Russ \cite[Equation (4.1) on p.~362]{mironescu-russ-2015}, however, to use their work, in addition to subtraction of a suitable polynomial for the applicability of the fundamental theorem of calculus in Lemma 4.1 on p.~362, one should establish a density result for $C^\infty_c(\mathbb{R}^n)$ in the homogeneous spaces, a question which is itself non-trivial, see e.g. \cite[p.~Theorem 6.107 on p.~251]{leoni-book-fractional}.  


One has an analog of Theorem \ref{theorem higher order} for $1<p<\infty$ by a similar argument.  
%

\begin{theorem}
\label{theorem p}Let $m\in\mathbb{N}_{0}$, $1\leq p<\infty$, and
$-1<a<p(m+1)-1$. Suppose that $f\in \dot{B}^{m+1-(a+1)/p,p}(\mathbb{R}^{n})$
and let $u$ be given by \eqref{function u}. Then
\[
\int_{\mathbb{R}_{+}^{n+1}}t^{a}|\nabla^{m+1}u(x,t)|^{p}dxdt\lesssim\left\vert
f\right\vert _{B^{m+1-(a+1)/p,p}(\mathbb{R}^{n})}^{p}.
\]

\end{theorem}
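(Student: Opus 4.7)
The proof mirrors that of Theorem \ref{theorem higher order}, incorporating Minkowski's integral inequality to handle the $L^p$ norm in the spatial variable. As a preliminary, I would verify that any $f \in \dot{B}^{m+1-(a+1)/p,p}(\mathbb{R}^n)$ has at most polynomial growth, so that the Gaussian convolution \eqref{function u} is well-defined; this should proceed along the lines already advertised for Theorem \ref{theorem higher order}, replacing the $L^1$ control on differences of $f$ by the corresponding $L^p$ control.

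The structural reduction is unchanged from the $p=1$ argument. The heat equation identity \eqref{heat equation} converts $t$-derivatives in $\nabla^{m+1}u$ into $x$-derivatives at the cost of a polynomial factor in $t$, and the Gaussian semi-group property $W_t = W_{t/\sqrt{2}} \ast W_{t/\sqrt{2}}$ allows these $x$-derivatives to be split between two copies of $W_{t/\sqrt{2}}$ and a derivative $\partial^\alpha f$ of $f$. Placing exactly two derivatives on the middle kernel produces, as in the introduction, terms of the form
\begin{equation*}
t^{l}\, \frac{\partial^{\gamma'} W_{t/\sqrt{2}}}{\partial x^{\gamma'}} \ast \frac{\partial^2 W_{t/\sqrt{2}}}{\partial x_i \partial x_j} \ast \frac{\partial^\alpha f}{\partial x^\alpha},
\end{equation*}
to which one applies the analog of Uspenski\u{\i}'s ansatz \eqref{usp-prime}, rewriting the middle convolution factor as an integral against the second-order central difference $\Delta^2_h \partial^\alpha f(x) := \partial^\alpha f(x+h) + \partial^\alpha f(x-h) - 2\partial^\alpha f(x)$.

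The new work for general $p$ lies in controlling the resulting expression in $L^p(t^a\,dt\,dx)$. A first application of Minkowski's integral inequality in the difference variable $h$, together with Young's convolution inequality applied to the auxiliary Gaussian factor (using $\|\partial^{\gamma'} W_{t/\sqrt{2}}\|_{L^1} \lesssim t^{-|\gamma'|}$), yields the pointwise-in-$t$ bound
\begin{equation*}
\|\textrm{term}(\cdot,t)\|_{L^p_x} \lesssim t^{l-|\gamma'|-n-2} \int_{\mathbb{R}^n} \left|\frac{\partial^2 W_1}{\partial x_i \partial x_j}(h\sqrt{2}/t)\right|\, \|\Delta^2_h \partial^\alpha f\|_{L^p_x}\, dh.
\end{equation*}
Raising to the $p$-th power, multiplying by $t^a$, and integrating in $t$, a second application of Minkowski's integral inequality moves the $L^p_t$ norm inside the $h$-integral; the substitution $h = (t/\sqrt{2})z$ followed by spherical coordinates in $z$ and the radial change $t \leftrightarrow |h|$ then reduces the estimate to a weighted integral of $\|\Delta^2_h \partial^\alpha f\|_{L^p_x}^p$ against $dh$, the uniformity in direction of the Gaussian's radial moments being what makes the spherical averaging pass through. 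I expect the main obstacle to be the careful bookkeeping across the two Minkowski steps and the two changes of variables, which must arrange the combined exponent on $|h|$ to be exactly $-n - p(s - |\alpha|)$ with $s = m+1-(a+1)/p$, so that the resulting integral is the second-difference characterization of $|\partial^\alpha f|_{B^{s-|\alpha|,p}(\mathbb{R}^n)}^p$ and is therefore dominated by $|f|_{B^{s,p}(\mathbb{R}^n)}^p$. The hypothesis $-1<a<p(m+1)-1$ is precisely what is required for the rescaled $t$-integrals to converge at both $0$ and infinity.
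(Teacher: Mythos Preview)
Your structural reduction is correct and matches the paper, but the double Minkowski argument has a genuine gap: it does not recover the $B^{s,p}_p$ seminorm, only the stronger $B^{s,p}_1$ seminorm. After your first Minkowski step you have
\[
\|\text{term}(\cdot,t)\|_{L^p_x}\lesssim \int_{\mathbb{R}^n} K(t,h)\,\|\Delta_h^2\partial^\alpha f\|_{L^p_x}\,dh,
\]
and a second Minkowski in $t$ then gives an upper bound of the form $\int_{\mathbb{R}^n}\tilde K(h)\,\|\Delta_h^2\partial^\alpha f\|_{L^p_x}\,dh$ with $\tilde K(h)\sim |h|^{-n-1}$ in the integer case. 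This is $|\partial^\alpha f|_{B^{1,p}_1}$, not the quantity $\bigl(\int |h|^{-n-p}\|\Delta_h^2\partial^\alpha f\|_{L^p_x}^p\,dh\bigr)^{1/p}=|\partial^\alpha f|_{B^{1,p}}$ that appears in the statement. Since $B^{s,p}_1\subsetneq B^{s,p}_p$ for $p>1$, your bound requires a strictly stronger hypothesis on $f$ than the theorem allows. Your claim that the output is ``a weighted integral of $\|\Delta^2_h\partial^\alpha f\|_{L^p_x}^p$ against $dh$'' is exactly where this goes wrong: Minkowski never places the $p$-th power \emph{inside} the $h$-integral.

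The paper repairs this by replacing your first Minkowski step with H\"older's inequality on the inner convolution. Writing $|\partial^2 W_{t/\sqrt{2}}(h)|=|\partial^2 W_{t/\sqrt{2}}(h)|^{1/p'}|\partial^2 W_{t/\sqrt{2}}(h)|^{1/p}$ and applying H\"older in $h$ yields
\[
|g(x)|^p\lesssim t^{-2(p-1)}\int_{\mathbb{R}^n}\Bigl|\tfrac{\partial^2 W_{t/\sqrt{2}}}{\partial x_i\partial x_j}(h)\Bigr|\,\bigl|\Delta_h^2\partial^\alpha f(x-h)\bigr|^p\,dh,
\]
which keeps the $p$-th power of the difference under the $h$-integral. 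After the outer Young step (which you already have) and integration in $(x,t)$, the $t$-integral of $t^{a+lp-|\gamma'|p-2(p-1)}|\partial^2 W_{t/\sqrt{2}}(h)|$ produces precisely $|h|^{-n-p}$, and Tonelli gives the $B^{1,p}$ seminorm directly. The non-integer case is handled the same way. Your well-definedness remark for $W_t\ast f$ via polynomial growth is correct and is how the paper passes from the inhomogeneous to the homogeneous space.
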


Next, we turn our attention to the inhomogeneous case. For $1<p<\infty$, Triebel in \cite[Theorem 2.9.1 on p. 214]{triebel-book1995} considered the (inhomogeneous) weighted
Sobolev space $W_{a}^{m+1,p}(\mathbb{R}_{+}^{n+1})$ defined as the space of
all functions $u\in W_{\operatorname*{loc}}^{m+1,p}(\mathbb{R}_{+}^{n+1})$
such that
\[
\Vert u\Vert_{W_{a}^{m+1,p}(\mathbb{R}_{+}^{n+1})}:=\left(  \int_{\mathbb{R}_{+}^{n+1}}t^{a}|u(x,t)|^{p}dxdt\right)  ^{1/p}+\sum_{j=1}%
^{m+1}|u|_{W_{a}^{j,p}(\mathbb{R}_{+}^{n+1})}<\infty.
\]
He proved that for $-1/p<a<m+1-1/p$, the mapping%
\[
u\mapsto\left(  u(x,0),\frac{\partial u}{\partial t}(x,0),\ldots
,\frac{\partial^{l}u}{\partial t^{l}}(x,0)\right)
\]
is a retraction from $W_{a}^{m+1,p}(\mathbb{R}_{+}^{n+1})$ onto $\prod
_{j=0}^{l}B^{m+1-j-a-1/p}(\mathbb{R}^{n})$. Here $l=\left\lceil
m-a+1/p\right\rceil $, where $\left\lceil s\right\rceil $ is the floor of $s$. The lifting makes use of the harmonic extension \eqref{function F harmonic}.

Triebel also showed that if $a\geq m+1-1/p$, then $$W_{a}^{m+1,p}%
(\mathbb{R}_{+}^{n+1})=W_{0,a}^{m+1,p}(\mathbb{R}_{+}^{n+1}),$$ where
$W_{0,a}^{m+1,p}(\mathbb{R}_{+}^{n+1})$ is the completion of $C_{c}^{\infty
}(\mathbb{R}_{+}^{n+1})$ with respect to the norm in $W_{a}^{m+1,p}%
(\mathbb{R}_{+}^{n+1})$. In particular, this implies that, in this case, the
trace operator cannot be continuous since we can approximate a smooth function
in $W_{a}^{m+1,p}(\mathbb{R}_{+}^{n+1})$ with non zero trace with a sequence
of functions in $C_{c}^{\infty}(\mathbb{R}_{+}^{n+1})$. See also the paper
\cite{grisvard1963} of Grisvard for the case $m=0$. 

We carry out this program in case $p=1$ in the following three theorems.

\begin{theorem}
\label{theorem non-homogeneous} Let $m\in\mathbb{N}_{0}$ and $-1<a\leq m$. If
$a<m$, then for every $f\in B^{m-a,1}(\mathbb{R}^{n})$, there exists $F\in
W_{a}^{m+1,1}(\mathbb{R}_{+}^{n+1})$ such that $\operatorname*{Tr}(F)=f$ and%
\[
\Vert F\Vert_{W_{a}^{m+1,1}(\mathbb{R}_{+}^{n+1})}\lesssim\Vert f\Vert
_{B^{m-a,1}(\mathbb{R}^{n})}.
\]
On the other hand, if $a=m$, then for every $f\in L^{1}(\mathbb{R}^{n})$ there
exists $F\in W_{m}^{m+1,1}(\mathbb{R}_{+}^{n+1})$ such that
$\operatorname*{Tr}(F)=f$ and%
\begin{equation}
\Vert F\Vert_{W_{m}^{m+1,1}(\mathbb{R}_{+}^{n+1})}\lesssim\Vert f\Vert
_{L^{1}(\mathbb{R}^{n})}.
 \label{z1}   
\end{equation}

\end{theorem}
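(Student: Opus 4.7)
The plan is to set $F(x,t) := \eta(t)\,u(x,t)$, where $u$ is the Gauss--Weierstrass extension from \eqref{function u} and $\eta \in C^\infty_c([0,\infty))$ is a smooth cutoff with $\eta \equiv 1$ on $[0,1]$ and $\operatorname{supp}\eta \subset [0,2]$. Because $W_t$ is an approximate identity, $u(\cdot,t) \to f$ in $L^1(\mathbb{R}^n)$ as $t \to 0^+$, so $\operatorname{Tr}(F) = f$. The key simplification is that since $F$ is compactly supported in $t$, the one-dimensional weighted Hardy inequality
\begin{equation*}
\int_0^\infty t^a|G(x,t)|\,dt \leq \frac{1}{a+1}\int_0^\infty t^{a+1}|\partial_t G(x,t)|\,dt,
\end{equation*}
valid for $a > -1$ and $G(\cdot,\infty) = 0$, applied iteratively to each component of $\nabla^j F$ and combined with $t^k \leq 2^k$ on $\operatorname{supp}F$, reduces every summand of $\|F\|_{W_a^{m+1,1}}$ --- namely $\int t^a|F|\,dxdt$ and each $|F|_{W_a^{j,1}}$ with $1 \leq j \leq m$ --- to the top-order seminorm $|F|_{W_a^{m+1,1}}$. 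It therefore suffices to bound $|F|_{W_a^{m+1,1}}$.

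By the Leibniz rule, $\nabla^{m+1}F$ splits as $\eta(t)\,\nabla^{m+1}u$ plus error pieces of the form $\eta^{(k)}(t)\,D^{m+1-k}u$ with $1 \leq k \leq m+1$, where $D^{m+1-k}u$ is a mixed derivative of $u$ of total order $m+1-k$. For the principal piece, Theorem~\ref{theorem higher order} applied to $f \in B^{m-a,1}(\mathbb{R}^n) \hookrightarrow \dot B^{m-a,1}(\mathbb{R}^n)$ yields, for $-1 < a < m$,
\begin{equation*}
\int_{\mathbb{R}_+^{n+1}} t^a\,\eta(t)\,|\nabla^{m+1}u|\,dxdt \leq \int_{\mathbb{R}_+^{n+1}} t^a\,|\nabla^{m+1}u|\,dxdt \lesssim |f|_{B^{m-a,1}(\mathbb{R}^n)}.
\end{equation*}
Each error piece is supported where $t \in [1,2]$; there I would use the heat-equation identity \eqref{heat equation} to turn $\partial_t$-derivatives of $u$ into powers of $t$ times spatial Laplacians of $u$, and combine with $\|\nabla_x^r W_t\|_{L^1} \lesssim t^{-r}$ from Young's inequality to obtain $\|D^{m+1-k}u(\cdot,t)\|_{L^1} \lesssim \|f\|_{L^1}$ uniformly on $[1,2]$. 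Integration against $t^a|\eta^{(k)}|$ there then contributes $\lesssim \|f\|_{L^1} \leq \|f\|_{B^{m-a,1}}$, completing the estimate in the case $a<m$.

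The borderline case $a = m$ is the main obstacle and requires a genuinely different extension. The ansatz $F = \eta u$ fails here: for rough $f \in L^1$ one has $\|\nabla^{m+1}u(\cdot,t)\|_{L^1} \sim t^{-(m+1)}\|f\|_{L^1}$, producing a logarithmic divergence $\int_0^2 t^m\cdot t^{-(m+1)}\,dt = +\infty$ that $\|f\|_{L^1}$ cannot absorb. A Gagliardo/Mironescu-style dyadic patching is the natural substitute: approximate $f$ by smoothings $\{f_k\}$ at suitably chosen scales (for instance $f_k := W_{2^{-k}}*f$), patch via a smooth partition of unity $\{\phi_k\}$ in $t$ with $\operatorname{supp}\phi_k \subset [2^{-k-1},2^{-k+1}]$, and set $F(x,t) := \eta(t)\sum_k \phi_k(t)f_k(x)$. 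Since $\sum_k\phi_k'\equiv 0$, time-derivatives of $F$ land on the differences $f_k - f_{k+1}$, exposing telescoping cancellations akin to those in the classical $W^{1,1}$ trace theorem. The hardest part will be to balance the dyadic scales against the weight $t^m \sim 2^{-km}$, the growths $|\phi_k^{(j)}|\sim 2^{kj}$ and $\|\nabla^{m+1-j}f_k\|_{L^1} \lesssim 2^{k(m+1-j)}\|f\|_{L^1}$, and the telescoping of $\{f_k\}$ so that the $k$-sum is controlled by $\|f\|_{L^1}$ rather than diverging.
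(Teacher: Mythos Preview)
Your treatment of the case $a<m$ is correct and in some ways cleaner than the paper's. Both use the same ansatz $F=\eta(t)\,u(x,t)$ with $u=W_t\ast f$, and both invoke Theorem~\ref{theorem higher order} for the principal term $\eta\,\nabla^{m+1}u$. Where you differ: you dispose of the lower-order seminorms $|F|_{W_a^{j,1}}$ for $0\le j\le m$ by iterating Hardy's inequality and then using $t^{m+1-j}\le 2^{m+1-j}$ on $\operatorname{supp}F$, which reduces everything to the top order in one stroke; the paper instead treats each derivative order directly (and omits the details). For the Leibniz error terms $\eta^{(k)}\,\partial_x^{\beta}\partial_t^{l-k}u$ with $k\ge 1$, you simply observe they are supported on $[1,2]$ and bound $\|\partial_x^{\beta}\partial_t^{l-k}u(\cdot,t)\|_{L^1}$ uniformly there via Young's inequality; the paper instead converts the $t$-derivatives via \eqref{heat equation}, splits into cases according to the resulting spatial order, and uses the Gagliardo--Nirenberg interpolation inequality together with a second application of \eqref{main estimate} with shifted parameters and Proposition~\ref{proposition embedding}. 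Your route is shorter; the paper's has the minor advantage of not invoking Hardy, but at the cost of more bookkeeping.

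Your proposal for $a=m$, however, has a genuine gap. The dyadic patching $F=\eta\sum_k\phi_k f_k$ with $f_k=W_{2^{-k}}\ast f$ is a \emph{linear} map $f\mapsto F$, and the explicit count you write down already shows it cannot close: for the pure spatial term $|\beta|=m+1$, $l=0$ there is no $\phi_k'$ and hence no telescoping, and the $k$th dyadic shell contributes
\[
\int_{\operatorname{supp}\phi_k} t^m\,\|\nabla_x^{m+1}f_k\|_{L^1}\,dt \;\sim\; 2^{-k(m+1)}\cdot 2^{k(m+1)}\|f\|_{L^1}\;=\;\|f\|_{L^1},
\]
so the $k$-sum diverges. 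No rearrangement of $\{f_k\}$ helps here, because the obstruction is exactly that $\|\nabla_x^{m+1}(W_{2^{-k}}\ast f)\|_{L^1}$ genuinely scales like $2^{k(m+1)}\|f\|_{L^1}$ for rough $f$. (This is consistent with the fact, due to Peetre in the case $m=0$, that the trace $W^{1,1}\to L^1$ admits no bounded linear right inverse.)

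The paper's resolution is both simpler and necessarily nonlinear. For $f\in C_c^\infty(\mathbb{R}^n)$ one sets $v_l(x,t):=f(x)\varphi(lt)$ with $\varphi$ a fixed cutoff; expanding $\nabla^{m+1}v_l$ by Leibniz and changing variables $r=lt$, every term involving a spatial derivative $\partial_x^\beta f$ with $|\beta|\ge 1$ carries a negative power of $l$ and hence tends to $0$ as $l\to\infty$, while the pure time-derivative term contributes a fixed constant times $\|f\|_{L^1}$. One then chooses $l=l(f)$ large enough that the former are dominated by $\|f\|_{L^1}$, and passes to general $f\in L^1$ by density. This is precisely Mironescu's argument for $m=0$; the paper observes it adapts verbatim to the weighted higher-order case.
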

The lifting in \eqref{z1} was obtained by Mironescu and Russ \cite[Proposition 1.14]{mironescu-russ-2015}.

The
following result is critical in reducing elliptic or parabolic boundary
value problems with inhomogeneous boundary conditions to homogeneous
ones (see, e.g., \cite[Theorem 4.2.2 on p.218]{necas-book2012} or \cite{mazya-mitrea-shaposhnikova-2010} for the case $p>1$). See also the recent work of
Gmeineder, Raita, and Van Schaftingen
\cite{gmeineder-raita-van-schaftingen2022} for an application to boundary
ellipticity. 

\begin{theorem}
\label{theorem trace higher}Let $m\in\mathbb{N}$ and $-1<a<m$. If
$a=k\in\mathbb{N}_{0}$, suppose that $f_{j}\in B^{m-k-j,1}(\mathbb{R}^{n})$ for
$j=0,\ldots,m-k-1$, and $f_{m-k}\in L^{1}(\mathbb{R}^{n})$. Then there exists
$F\in W_{a}^{m+1,1}(\mathbb{R}_{+}^{n+1})$ such that $\operatorname*{Tr}%
(\,F)=f_{0}$, $\operatorname*{Tr}(\,\frac{\partial^{j}F}{\partial t^{j}%
})=f_{j}$ for $j=1,\ldots,m-k-1$, and%
\begin{equation}
\Vert F\Vert_{W_{k}^{m+1,1}(\mathbb{R}_{+}^{n+1})}\lesssim\sum_{j=0}%
^{m-k-1}\Vert f_{j}\Vert_{B^{m-k-j,1}(\mathbb{R}^{n})}+\Vert f_{m-k}%
\Vert_{L^{1}(\mathbb{R}^{n})}.\label{trace estimate higher}%
\end{equation}
On the other hand, if $a\notin\mathbb{N}_{0}$, suppose that $f_{j}\in
B^{m-a-j,1}(\mathbb{R}^{n})$ for $j=0,\ldots,l$, where $l:=\lfloor m-a\rfloor
$. Then there exists $F\in W_{a}^{m+1,1}(\mathbb{R}_{+}^{n+1})$ such that
$\operatorname*{Tr}(\,F)=f_{0}$, $\operatorname*{Tr}(\,\frac{\partial^{j}%
F}{\partial t^{j}})=f_{j}$ for $j=1,\ldots,l$, and%
\[
\Vert F\Vert_{W_{a}^{m+1,1}(\mathbb{R}_{+}^{n+1})}\lesssim\sum_{j=0}^{l}\Vert
f_{j}\Vert_{B^{m-a-j,1}(\mathbb{R}^{n})}.
\]

\end{theorem}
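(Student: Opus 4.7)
\emph{The plan} is to construct $F$ as a weighted sum of heat extensions of suitably modified initial data. Writing $u_{g}(x,t):=(W_{t}\ast g)(x)$ and fixing a cutoff $\eta\in C^{\infty}_{c}([0,\infty))$ with $\eta\equiv 1$ on $[0,1]$ and $\mathrm{supp}\,\eta\subset[0,2]$, I would try the ansatz
\[
F(x,t):=\eta(t)\sum_{j=0}^{\ell}\frac{t^{j}}{j!}\,u_{g_{j}}(x,t),
\]
with $\ell=l$ in the non-integer case and $\ell=m-k$ in the integer case, and auxiliary data $g_{0},\dots,g_{\ell}$ to be chosen so that the prescribed traces are matched. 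When $\ell=0$ this reduces to Theorem~\ref{theorem non-homogeneous}; the new content is the book-keeping for the higher-order traces and their seminorm estimates.

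\emph{Selection of the $g_{j}$.} Iterating \eqref{heat equation} yields
\[
\partial_{t}^{2i+1}u_{g}(x,0)=0,\qquad \partial_{t}^{2i}u_{g}(x,0)=\kappa_{i}\,\Delta^{i}g(x),
\]
for explicit constants $\kappa_{i}$. Combined with Leibniz applied to the ansatz (using $\eta\equiv 1$ near $0$), the condition $\partial_{t}^{k}F(\cdot,0)=f_{k}$ becomes the lower-triangular recursion
\[
g_{k}=f_{k}-\sum_{\substack{0\leq j<k\\k-j\ \text{even}}}\binom{k}{j}\kappa_{(k-j)/2}\,\Delta^{(k-j)/2}g_{j},
\]
which expresses each $g_{k}$ as a finite linear combination of $f_{0},\ldots,f_{k}$ and their even-order Laplacians. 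Because $\Delta$ lowers Besov smoothness by exactly $2$, induction gives $g_{k}\in B^{m-a-k,1}(\mathbb{R}^{n})$ (respectively $L^{1}(\mathbb{R}^{n})$ for $k=\ell$ in the integer case, via the embedding $B^{0,1}\hookrightarrow L^{1}$) with norm bounded by $\sum_{i\leq k}\|f_{i}\|$ in the respective spaces.

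\emph{Seminorm estimates.} Fix $|\alpha|+\beta=r\leq m+1$. Expanding $\nabla_{x}^{\alpha}\partial_{t}^{\beta}F$ by Leibniz, the contributions from $\eta^{(i)}$ with $i\geq 1$ are supported in $[1,2]$, where the heat extensions and all their derivatives are pointwise controlled by $\|g_{j}\|_{L^{1}}$, so those pieces are harmless. For the principal part, iterating \eqref{heat equation} gives
\[
\partial_{t}^{q}u_{g}=\sum_{\lceil q/2\rceil\leq s\leq q}c_{q,s}\,t^{2s-q}\,\Delta^{s}u_{g},
\]
which trades each time derivative for two horizontal ones at the cost of a power of $t$. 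After substitution, each summand takes the form $C\,t^{a'}|\nabla_{x}^{M'}u_{g_{j}}|$ with effective weight $a':=a+j+2s-\beta$ and effective derivative order $M':=|\alpha|+2s$, satisfying $M'-1-a'=r-1-a-j$. Three cases then arise. If $a'<M'-1$, Theorem~\ref{theorem higher order} bounds the integral by $|g_{j}|_{B^{r-1-a-j,1}}\leq\|g_{j}\|_{B^{m-a-j,1}}$ via the inhomogeneous embedding. If $a'=M'-1$, the endpoint $a=m$ of Theorem~\ref{theorem non-homogeneous} yields $\|g_{j}\|_{L^{1}}$. Finally, if $a'>M'-1$, the Gaussian bound $\|\nabla^{M'}W_{t}\|_{L^{1}}\lesssim t^{-M'}$ combined with $\|u_{g}(\cdot,t)\|_{L^{1}}\leq\|g\|_{L^{1}}$ yields $\int_{0}^{2}t^{a'-M'}\,dt\cdot\|g_{j}\|_{L^{1}}$, convergent in this regime and again controlled by $\|g_{j}\|_{B^{m-a-j,1}}$ (or by $\|f_{m-k}\|_{L^{1}}$ in the integer case).

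\emph{Main obstacle.} The hard part will be the integer case $a=k$ at the top index $j=m-k$, where $g_{m-k}$ lives only in $L^{1}(\mathbb{R}^{n})$: both the correction step (verifying that its Laplacian corrections still land in $L^{1}$, which rests on $B^{0,1}\hookrightarrow L^{1}$) and the corresponding borderline seminorm estimate (which requires the endpoint of Theorem~\ref{theorem non-homogeneous}) must be executed with care. A subsidiary technicality is the low-order seminorms $r<m+1$: the effective Besov index $r-1-a-j$ can be non-positive, so one must fall back on either the inhomogeneous embedding (when Theorem~\ref{theorem higher order} still applies) or on the direct Gaussian estimate (when it does not), with the trichotomy above providing a unified, clean treatment.
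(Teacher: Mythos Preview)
Your overall architecture is reasonable and differs from the paper's: you pre-solve a triangular system for corrected data $g_{j}$ and then use a single heat-based ansatz, whereas the paper builds the lifting one layer at a time, at each stage correcting the next trace via the forward trace inequality~\eqref{m7} and never needing to track how $\Delta^{i}$ acts on the $f_{j}$. That difference is largely stylistic, and for the non-integer case your scheme goes through.

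The genuine gap is exactly where you flag the ``main obstacle'': the top index $j=m-k$ in the integer case $a=k$. There $g_{m-k}\in L^{1}(\mathbb{R}^{n})$ only, and your ansatz uses $\eta(t)\tfrac{t^{m-k}}{(m-k)!}(W_{t}\ast g_{m-k})$. For the pure spatial derivative $\partial_{x}^{\beta}$ with $|\beta|=m+1$ this contributes $t^{m}|\nabla_{x}^{m+1}(W_{t}\ast g_{m-k})|$, i.e.\ precisely your borderline case $a'=M'-1$; the only bound available from the Gaussian is $\|\nabla_{x}^{m+1}W_{t}\|_{L^{1}}\lesssim t^{-(m+1)}$, which produces the divergent $\int_{0}^{2}t^{-1}\,dt$. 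Your proposed fix, invoking ``the endpoint $a=m$ of Theorem~\ref{theorem non-homogeneous}'', does not apply: that endpoint is \emph{not} proved for the heat extension $W_{t}\ast f$ but for the unrelated construction $f(x)\varphi(lt)$ with $l$ depending on $f$, and it gives no information about $\int t^{m}|\nabla^{m+1}(W_{t}\ast g)|$. In fact the $L^{1}$ lifting here is necessarily nonlinear (Peetre), so no uniform bound of the form $\int t^{m}|\nabla_{x}^{m+1}(W_{t}\ast g)|\lesssim\|g\|_{L^{1}}$ can hold. The paper handles this by splitting off the top index and invoking the separate Lemma~\ref{lemma normal L1}, whose building block is $g(x)\tfrac{t^{m-k}}{(m-k)!}\varphi(lt)$ rather than a heat extension; that lemma is the missing ingredient in your scheme.

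A minor point: the space $B^{0,1}$ is not defined here, so the claimed embedding $B^{0,1}\hookrightarrow L^{1}$ should be replaced by $B^{r,1}\hookrightarrow W^{r,1}$ for $r\in\mathbb{N}$ (via $B^{1,1}\hookrightarrow W^{1,1}$), which is what actually places $\Delta^{(k-j)/2}g_{j}$ in $L^{1}$ at the top step.
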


Finally, we discuss the case $a>m$. 

\begin{theorem}
\label{theorem a>m}Let $m\in\mathbb{N}_{0}$ and $a>m$. Then $W_{a}^{m+1,1}(\mathbb{R}_{+}^{n+1}%
)=W_{0,a}^{m+1,1}(\mathbb{R}_{+}^{n+1})$. 
\end{theorem}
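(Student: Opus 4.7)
Only the inclusion $W_a^{m+1,1}(\mathbb{R}_+^{n+1}) \subseteq W_{0,a}^{m+1,1}(\mathbb{R}_+^{n+1})$ needs proof. Given $u \in W_a^{m+1,1}(\mathbb{R}_+^{n+1})$, the plan is to approximate $u$ in the $W_a^{m+1,1}$-norm by functions in $C_c^\infty(\mathbb{R}_+^{n+1})$ via the classical scheme of truncation followed by mollification. Fix $\eta \in C^\infty(\mathbb{R})$ with $\eta = 0$ on $(-\infty, 1]$ and $\eta = 1$ on $[2,\infty)$, and $\phi \in C_c^\infty(\mathbb{R}^{n+1})$ with $\phi \equiv 1$ on $B_1$ and $\phi \equiv 0$ outside $B_2$; set $\eta_\epsilon(t) := \eta(t/\epsilon)$ and $\phi_R(x,t) := \phi((x,t)/R)$, and define $u_{\epsilon,R}(x,t) := \eta_\epsilon(t)\, \phi_R(x,t)\, u(x,t)$, which has compact support in $\mathbb{R}_+^{n+1}$. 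I will prove $u_{\epsilon, R} \to u$ in $W_a^{m+1,1}$ as $R \to \infty$ and then $\epsilon \to 0$. Since $t^a$ is bounded between positive constants on the compact support of $u_{\epsilon, R}$, a standard mollification at scale $\delta \ll \epsilon$ produces $C_c^\infty(\mathbb{R}_+^{n+1})$-approximations in the $W_a^{m+1,1}$-norm, after which a diagonal argument finishes the proof.

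The heart of the argument is the iterated weighted Hardy estimate: for every integer $1 \leq l \leq m+1$ and every multi-index $(\alpha, \beta) \in \mathbb{N}_0^n \times \mathbb{N}_0$ with $|\alpha| + \beta = m+1-l$,
\[
\int_0^T\!\!\int_{\mathbb{R}^n} t^{a-l}\, |\partial_x^\alpha \partial_t^\beta u(x,t)|\, dx\,dt \;\lesssim_T\; \int_{\mathbb{R}_+^{n+1}} t^a |\nabla^{m+1} u|\, dx\,dt + \sum_{j=0}^{l-1} \int_{\mathbb{R}^n} |\partial_x^\alpha \partial_t^{\beta+j} u(x,T)|\, dx
\]
for a suitable $T > 0$. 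This follows by expanding $\partial_x^\alpha \partial_t^\beta u(x,t)$ via the order-$l$ Taylor formula in $t$ centered at $T$ with integral remainder---valid for almost every $x$ because the weak $t$-derivatives of $u$ of order $\leq m$ are locally absolutely continuous---multiplying by $t^{a-l}$, integrating on $\mathbb{R}^n \times (0,T)$, and applying Fubini. The Beta-type integrals $\int_0^\tau t^{a-l}(\tau-t)^{j} dt$ that arise are finite precisely when $a - l > -1$; pushing the iteration all the way to $l = m+1$ imposes exactly the hypothesis $a > m$. The boundary terms at $t = T$ are handled as follows: the $W_a^{m+1,1}$-norm supplies $\int t^a |\partial_x^\alpha \partial_t^{\beta+j} u|\, dx\,dt < \infty$ for every multi-index of order $\leq m+1$, so Fubini yields $\int |\partial_x^\alpha \partial_t^{\beta+j} u(x, T)|\, dx < \infty$ for a.e.\ $T$, and any $T$ in the intersection of these countably many full-measure sets is admissible.

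Given the Hardy estimate, the Leibniz expansion of $\nabla^{m+1}(u - u_{\epsilon, R})$ decomposes into the principal term $(1 - \eta_\epsilon \phi_R) \nabla^{m+1} u$, which tends to $0$ in $L^1(t^a\, dx\,dt)$ by dominated convergence, plus error terms coming from derivatives of the cutoffs. A typical $\eta_\epsilon^{(l)}$-error ($l \geq 1$) is bounded, using the pointwise inequality $t^a \epsilon^{-l} \leq 2^l\, t^{a-l}$ on $\{\epsilon \leq t \leq 2\epsilon\}$, by
\[
\int_\epsilon^{2\epsilon}\!\!\int_{\mathbb{R}^n} t^a\, \epsilon^{-l}\, |\partial_x^\alpha \partial_t^\beta u|\, dx\,dt \;\leq\; 2^l \int_\epsilon^{2\epsilon}\!\!\int_{\mathbb{R}^n} t^{a-l}\, |\partial_x^\alpha \partial_t^\beta u|\, dx\,dt,
\]
with $|\alpha| + \beta = m + 1 - l$, and vanishes as $\epsilon \to 0$ by the Hardy estimate together with absolute continuity of the Lebesgue integral. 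The $\phi_R$-errors vanish as $R \to \infty$ by dominated convergence, directly invoking $t^a |\partial^\gamma u| \in L^1(\mathbb{R}_+^{n+1})$ for every $|\gamma| \leq m+1$ (built into the $W_a^{m+1,1}$-norm). The main obstacle is the Hardy estimate itself: it must be established directly from the pointwise fundamental theorem of calculus for Sobolev functions, without presupposing the density that one is trying to prove, and the threshold $a > m$ is used precisely at the step that strips off the final derivative of $u$.
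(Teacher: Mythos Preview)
Your approach is correct and genuinely different from the paper's. The paper, following Grisvard, reverses the order of operations: it \emph{first} reduces to the case $u\in C^\infty(\mathbb{R}_+^{n+1})$ with $u$ vanishing outside $B_n(0,r)\times(0,r)$ (by reflection across $\{t=0\}$, mollification, and a large-scale cutoff $\phi((x,t)/j)$), and \emph{then} applies the boundary cutoff $\varphi(jt)$. Because $u$ is now smooth with compact support, the Leibniz error terms are estimated trivially by
\[
j^{i}\int_{1/j}^{2/j} t^{a}\,dt \cdot \|\nabla^{m+1-i}u\|_{\infty}\,\mathcal{L}^{n}(B_n(0,r))
\;\lesssim\; j^{-(a+1-i)}\to 0
\]
for $1\le i\le m+1$, using $a>m$. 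No Hardy inequality is needed.

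Your route works directly with $u\in W_a^{m+1,1}$, cutting off at $t=0$ before any smoothing and paying for this with the weighted Hardy/Taylor estimate. This is more hands-on and makes the threshold $a>m$ appear transparently as the integrability condition $a-l>-1$ for the Beta integrals at $l=m+1$; it also avoids the reflection step. The paper's argument is shorter and more elementary once one accepts the preliminary density reduction, since the $L^\infty$ bound replaces your entire Hardy machinery. One minor point: you treat only the top-order piece $\nabla^{m+1}(u-u_{\epsilon,R})$ explicitly, but the full $W_a^{m+1,1}$-norm includes all orders $0\le k\le m+1$; the lower-order Leibniz errors are handled by the same Taylor/Hardy argument with $l\le k\le m+1$ (indeed under the weaker constraint $a>k-1$), so this is not a gap, just an omission.
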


This paper is organized as follows. In Section \ref{section preliminaries}, we discuss some basic properties of Besov spaces. 
In Section \ref{section traces}, 
we prove Theorems \ref{theorem higher order} and \ref{theorem p}. Section \ref{section non-homogeneous} deals with the inhomogeneous case: We prove Theorems \ref{theorem non-homogeneous}, \ref{theorem trace higher}, and \ref{theorem a>m}.  Finally, in Section \ref{section poisson}, we prove several extension results via harmonic extension.  Here we also show how Uspenski\u{\i}'s characterization of the trace of $W^{2,1}(\mathbb{R}^{n+1}_+)$ by harmonic extension yields a simple proof of the boundedness of the Riesz transforms on $B^{1,1}(\mathbb{R}^n)$, thus giving a short proof of the latter fact which is a standard consequence of Littlewood--Paley theory  (see, e.g., \cite{sawano2020} or \cite[Section
5.2.2]{triebel-book2010}).

\section{Preliminaries}
\label{section preliminaries}
In this section, we present some basic properties of Besov spaces and the Riesz
transform that we will use in the sequel. Throughout this paper, the
expression
\begin{equation}
\mathcal{A}\lesssim\mathcal{B}\text{\quad means }\mathcal{A}\leq C\mathcal{B}%
\nonumber
\end{equation}
for some constant $C>0$ that depends on the parameters quantified in the
statement of the result (usually $n$ and $p$), but not on the functions and
their domain of integration.

Given $a\in\mathbb{R}$ and $1\le p<\infty$, we denote by $L^p_a(\mathbb{R}^{n+1}_+)$
the space all measurable functions $f:\mathbb{R}^{n+1}_+\rightarrow\mathbb{R}$ such that
\begin{equation}
    \Vert f\Vert_{L^p_a(\mathbb{R}^{n+1}_+)}:=\left(\int_{\mathbb{R}^{n+1}_+}t^a|f(x,t)|^pdxdt\right)^{1/p}
    <\infty.\label{Lp weighted}
\end{equation}

Given a function $f\in W_{\operatorname*{loc}%
}^{m,p}(\Omega)$, where $\Omega\subseteq\mathbb{R}^{n+1}$, it will be convenient to have several different symbols to denote various derivatives beyond what we have introduced in Section \ref{intro}, for $k=1,\ldots, m$,
\begin{itemize}
    \item $\nabla^k f$ the inductively defined gradient jointly in $(x,t)$ of order $k$;
    \item $\frac{\partial^\alpha f}{\partial x^\alpha}$ the $k$th order partial derivative of $f$ in $x$ given by the multi-index $\alpha \in \mathbb{N}_0^n$ with $|\alpha|=k$.
\end{itemize}

  In particular, for $k=1,\ldots, m$, we also denote by
\begin{itemize}
    \item $\nabla_x^k f$ the inductively defined gradient in $x$ of order $k$;
    \item $\frac{\partial f}{\partial x_i}$ the first order partial derivative of $f$ with respect to the trace variable $x_i$; 
    \item $\frac{\partial^k f}{\partial t^k} $ the $k$th order partial derivative of $f$ with respect to the extension variable;
    \item $\partial^\alpha f$ the partial derivative of $f$ in $(x,t)$ given by the multi-index $\alpha \in \mathbb{N}_0^k\times \mathbb{N}$ with $|\alpha|=k$.
\end{itemize}
 
\begin{definition}
Given an open set $\Omega\subseteq\mathbb{R}^{n+1}$, $m\in\mathbb{N}$, and
$1\leq p\leq\infty$, we say that a function $f\in W_{\operatorname*{loc}%
}^{m,p}(\Omega)$ belongs to the \emph{homogeneous Sobolev space }$\dot{W}%
^{m,p}(\Omega)$ if $|\nabla^m f| \in L^{p}(\Omega)$.
\end{definition}

Given a function $f:\mathbb{R}^{n}\rightarrow\mathbb{R}$ and $x,h\in
\mathbb{R}^{n}$, we write%
\begin{equation}
\Delta_{h}f(x):=f(x+h)-f(x),\quad\Delta_{h}^{k+1}f(x):=\Delta_{h}^k(\Delta
_{h}f(x)). \label{difference}%
\end{equation}
Observe that 
$$ \Delta_{h}^{2}f(x)=f(x+2h)-2f(x+h)+f(x)$$
and
$$ \Delta_{h}^{2}f(x-h)=f(x+h)-2f(x)+f(x-h).$$

\begin{definition}
Given $1\le p,q<\infty$ and $0<s\le1$, we say that a function $f\in
L_{\operatorname*{loc}}^{p}(\mathbb{R}^{n})$ belongs to the \emph{homogeneous
Besov space} $\dot{B}^{s,p}_{q}(\mathbb{R}^{n})$ if%
\[
|f|_{B^{s,p}_{q}(\mathbb{R}^{n})}:=\left(  \int_{\mathbb{R}^{n}}\|
\Delta^{\lfloor s\rfloor+1}_{h}f\|_{L^{p}(\mathbb{R}^{n})}^{q} \frac
{dh}{|h|^{n+sq}}\right)  ^{1/q}<\infty,
\]
where $\lfloor s\rfloor$ is the integer part of $s$. The (inhomogeneous)
\emph{Besov space} $B^{s,p}_{q}(\mathbb{R}^{n})$ is the space of all functions
$f\in L^{p}(\mathbb{R}^{n})\cap\dot{B}^{s,p}_{q}(\mathbb{R}^{n})$ endowed with
norm%
\[
\Vert f\Vert_{B^{s,p}_{q}(\mathbb{R}^{n})}:=\Vert f\Vert_{L^{p}(\mathbb{R}%
^{n})}+|f|_{B^{s,p}_{q}(\mathbb{R}^{n})}.
\]

\end{definition}

When $q=p$, we write $\dot{B}^{s,p}(\mathbb{R}^{n})$ and $B^{s,p}%
(\mathbb{R}^{n})$ for $\dot{B}^{s,p}_{p}(\mathbb{R}^{n})$ and $B^{s,p}%
_{p}(\mathbb{R}^{n})$, respectively.

In what follows, we will use the equivalent seminorm for $\dot{B}%
^{1,1}(\mathbb{R}^{n})$:%
\[
|f|_{B^{1,1}(\mathbb{R}^{n})}^{\infty}:=\int_{0}^{\infty}\sup_{|h|\leq r}\|
\Delta^{2}_{h}f\|_{L^{1}(\mathbb{R}^{n})}\frac{dr}{r^{2}}%
\]
(see \cite[Proposition 17.17]{leoni-book-sobolev}).

\begin{definition}
Given $1\le p,q<\infty$ and $s>1$, we define the \emph{homogeneous Besov
space} $\dot{B}^{s,p}_{q}(\mathbb{R}^{n})$ as the space of all functions $f\in
W_{\operatorname*{loc}}^{\ell,p}(\mathbb{R}^{n})$ such that $\frac
{\partial^{\alpha}f}{\partial x^{\alpha}}\in\dot{B}^{s-\ell,p}_{q}%
(\mathbb{R}^{m})$ for all multi-indices $\alpha\in\mathbb{N}_{0}^{n}$ with
$|\alpha|=\ell$, where $\ell=\max\{m\in\mathbb{N}:\,m<s\}$. The
(inhomogeneous) \emph{Besov space} $B^{m,p}_{q}(\mathbb{R}^{n})$ is the
space of all functions $f\in W^{\ell,p}(\mathbb{R}^{n})\cap\dot{B}^{s,p}%
_{q}(\mathbb{R}^{n})$ endowed with norm%
\[
\Vert f\Vert_{B^{s,p}_{q}(\mathbb{R}^{n})}:=\Vert f\Vert_{W^{\ell
,p}(\mathbb{R}^{n})}+\sum_{|\alpha|=m-\ell}\left|  \frac{\partial^{\alpha}%
f}{\partial x^{\alpha}}\right|  _{B^{s-\ell,p}_{q}(\mathbb{R}^{n})}.
\]

\end{definition}

Note that $\ell=\lfloor s\rfloor$ if $s\notin\mathbb{N}$ and $\ell=\lfloor
s\rfloor-1$ if $s\in\mathbb{N}$. As before, when $q=p$, we write $\dot
{B}^{s,p}(\mathbb{R}^{n})$ and $B^{s,p}(\mathbb{R}^{n})$ for $\dot{B}%
^{s,p}_{p}(\mathbb{R}^{n})$ and $B^{s,p}_{p}(\mathbb{R}^{n})$, respectively.

It is important to remark that when $s\notin \mathbb{N}$, the Besov spaces $B^{s,p}(\mathbb{R}^n)$ coincide with the fractional Sobolev spaces $W^{s,p}(\mathbb{R}^n)$, while for $s=k\in \mathbb{N}$,
$$B^{k,p}(\mathbb{R}^n)\subsetneq W^{k,p}(\mathbb{R}^n).$$
For the continuous embedding, we refer to \cite[Theorem 17.66]{leoni-book-sobolev}.  The two spaces are not equivalent.  In the case $p>1$ this follows from \cite[Theorems 2.3.9 and 2.5.6]{triebel1973}. 
 When $p=1$, there is a simple example for $k=1$:  Assume by contradiction that 
$$\|f\|_{B^{1,1}(\mathbb{R}^n)}\lesssim \|f\|_{W^{1,1}(\mathbb{R}^n)} $$
for all $f\in W^{1,1}(\mathbb{R}^n)$. It follows by a mollification argument that 
$$\|f\|_{B^{1,1}(\mathbb{R}^n)}\lesssim \|f\|_{L^1(\mathbb{R}^n)}+|Df|(\mathbb{R}^n) $$
for all $f\in BV(\mathbb{R}^n)$. 
Take $f = \chi_{[0,1]^n} \in BV(\mathbb{R}^n)$. 
Given $x\in\mathbb{R}^{n}$, we write
$$x=(x',x_n)\in \mathbb{R}^{n-1}\times \mathbb{R}.$$ Then, by the change of variables $h'=h_n z'$, we obtain
\begin{align*}
|f&|_{B^{1,1}(\mathbb{R}^{n})}
\\
&\geq   \int_{[0,1]^{n-1}\times [0,1/2]} \int_{[0,1]^{n-1}\times [x_n,1/2]}  \frac
{|\Delta^2_h\chi_{[0,1]^n}(x-h)|}{|h|^{n+1}} dh'dh_ndx'dx_n\\
&= \int_{[0,1/2]} \int_{[x_n,1/2]} \int_{[0,1]^{n-1}}  \frac
{dh'}{(|h'|^2+h_n^2)^{(n+1)/2}}dh_ndx_n\\
&\geq \int_{[0,2]^{n-1}}  \frac{dz'}{(|z'|^2+1)^{(n+1)/2}}   \int_{[0,1/2]} \int_{[x_n,1/2]}\frac{1}{h_n^2} dh_ndx_n \\
&=\infty.
\end{align*}
When $k>p=1$, this example can be modified as follows.  Let $\varphi \in C^\infty_c([-1/2,3/2]^n)$ be a function such that $\varphi\equiv 1$ on $[0,1]^n$.  Define
\begin{align*}
    \psi(x):= \varphi(x) \int_{-1}^{x_n} \int_{-1}^{s_{k-2}}\dots \int_{-1}^{s_2} \chi_{\{x_n>0\}}(s_1)\varphi(x',s_1)\;ds_1\ldots ds_{k-1}.
\end{align*}
We claim $D (\nabla^{k-1}\psi) \in M_b(\mathbb{R}^n;\mathbb{R}^{n\times k})$.  In fact, for every $\alpha \neq (0,\ldots,k)$ such that $|\alpha|=k$
\begin{align*}
\frac{\partial^\alpha \psi}{\partial x^\alpha} 
\end{align*}
is a bounded, compactly supported function, so that it only remains to observe that
\begin{align*}
\left(D\frac{\partial^{k-1} \psi}{\partial x_n^{k-1}}\right)_k(x) = \varphi^2(x)\mathcal{H}^{n-1}|_{\{x_n=0\}} + \tilde{\psi} 
\end{align*}
for some bounded compactly supported function $\tilde{\psi}$ and the claim follows.  On the other hand, $\psi \notin B^{k,1}(\mathbb{R}^n)$ since
\begin{align*}
\left\|\frac{\partial^{k-1} \psi}{\partial x_n^{k-1}}\right\|_{B^{1,1}(\mathbb{R}^n)} \geq \|\chi_{\{x_n>0\}}\varphi^2\|_{B^{1,1}(\mathbb{R}^n)} - \|\tilde{\psi}\|_{B^{1,1}(\mathbb{R}^n)},
\end{align*}
and while
\begin{align*}
\|\tilde{\psi}\|_{B^{1,1}(\mathbb{R}^n)} <\infty,
\end{align*}
a computation similar to the preceding shows
\begin{align*}
\|\chi_{\{x_n>0\}}\varphi^2\|_{B^{1,1}(\mathbb{R}^n)}=\infty.
\end{align*}

The following result is well known (see, e.g., \cite[Theorem 17.24]{leoni-book-sobolev} for a proof that uses abstract interpolation).

\begin{proposition}
\label{proposition embedding}Let $0<s\leq1$. Then $W^{\lfloor s\rfloor
+1,1}(\mathbb{R}^{n})$ is continuously embedded in $B^{s,1}(\mathbb{R}^{n})$.
\end{proposition}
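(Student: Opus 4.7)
The plan is to treat the two cases $0<s<1$ and $s=1$ in parallel, as both reduce to a standard small-$h$/large-$h$ splitting in the Besov seminorm integral. In both cases the $L^1$ piece of the $B^{s,1}$ norm is trivially bounded by the $W^{\lfloor s\rfloor+1,1}$ norm, so the whole issue is to control
\[
|f|_{B^{s,1}(\mathbb{R}^n)}=\int_{\mathbb{R}^n}\|\Delta_h^{\lfloor s\rfloor+1}f\|_{L^1(\mathbb{R}^n)}\frac{dh}{|h|^{n+s}}.
\]

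First I would decompose the integral as $\int_{|h|\le 1}+\int_{|h|>1}$. For $|h|>1$ the order of the difference is irrelevant: by translation invariance $\|\Delta_h^{\lfloor s\rfloor+1}f\|_{L^1}\le 2^{\lfloor s\rfloor+1}\|f\|_{L^1}$, and the remaining integral $\int_{|h|>1}|h|^{-n-s}\,dh$ converges since $s>0$. This reduces the problem to the small-$h$ region, where the required gain in powers of $|h|$ must come from the differentiability of $f$.

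For $0<s<1$ we have $\lfloor s\rfloor+1=1$, and the fundamental theorem of calculus gives
\[
\Delta_h f(x)=\int_0^1\nabla f(x+th)\cdot h\,dt,
\]
so Fubini yields $\|\Delta_h f\|_{L^1}\le |h|\,\|\nabla f\|_{L^1}$, and $\int_{|h|\le 1}|h|^{1-n-s}\,dh<\infty$ because $s<1$. For $s=1$ we have $\lfloor s\rfloor+1=2$, and I would iterate the same identity: writing $\Delta_h^2 f=\Delta_h(\Delta_h f)$ and applying the FTC twice,
\[
\Delta_h^2 f(x)=\int_0^1\!\!\int_0^1 \nabla^2 f\bigl(x+(t+\tau)h\bigr)[h,h]\,d\tau\,dt,
\]
giving $\|\Delta_h^2 f\|_{L^1}\le |h|^2\,\|\nabla^2 f\|_{L^1}$, after which $\int_{|h|\le 1}|h|^{2-n-1}\,dh<\infty$. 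Collecting the four pieces and recalling that $\|f\|_{L^1}+\|\nabla^{\lfloor s\rfloor+1}f\|_{L^1}\le \|f\|_{W^{\lfloor s\rfloor+1,1}}$ finishes the proof.

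No step is genuinely hard. The only subtlety is justifying the pointwise FTC identities for an arbitrary $f\in W^{\lfloor s\rfloor+1,1}(\mathbb{R}^n)$ rather than a smooth one; this is handled by a routine mollification argument, since smooth functions are dense in $W^{k,1}(\mathbb{R}^n)$ and both sides of the desired inequality $\|\Delta_h^{\lfloor s\rfloor+1}f\|_{L^1}\le |h|^{\lfloor s\rfloor+1}\|\nabla^{\lfloor s\rfloor+1}f\|_{L^1}$ are continuous with respect to $W^{\lfloor s\rfloor+1,1}$ convergence.
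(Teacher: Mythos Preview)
Your proof is correct and follows essentially the same approach as the paper: split the seminorm integral into $|h|\le 1$ and $|h|>1$, use the trivial bound $\|\Delta_h^{\lfloor s\rfloor+1}f\|_{L^1}\le 2^{\lfloor s\rfloor+1}\|f\|_{L^1}$ on the outer piece, and use the fundamental theorem of calculus to gain $|h|^{\lfloor s\rfloor+1}$ on the inner piece. The paper writes out only the case $s=1$ and omits the details for $0<s<1$, whereas you spell out both; otherwise the arguments are identical.
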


\begin{proof}
Assume that $s=1$ and let $f\in W^{2,1}(\mathbb{R}^{n})$. By the fundamental
theorem of calculus,%
\[
\Vert\Delta_{h}^{2}f\Vert_{L^{1}(\mathbb{R}^{n})}\leq|h|^{2}\Vert\nabla
^{2}_xf\Vert_{L^{1}(\mathbb{R}^{n})},
\]
and so%
\begin{align*}
\int_{\mathbb{R}^{n}}\Vert\Delta_{h}^{2}f\Vert_{L^{1}(\mathbb{R}^{n})}%
\frac{dh}{|h|^{n+1}}  & =\int_{B(0,1)}\Vert\Delta_{h}^{2}f\Vert_{L^{1}%
(\mathbb{R}^{n})}\frac{dh}{|h|^{n+1}}+\int_{\mathbb{R}^{n}\setminus
B(0,1)}\Vert\Delta_{h}^{2}f\Vert_{L^{1}(\mathbb{R}^{n})}\frac{dh}{|h|^{n+1}%
}\\
& \leq\Vert\nabla^{2}_xf\Vert_{L^{1}(\mathbb{R}^{n})}\int_{B(0,1)}\frac
{dh}{|h|^{n-1}}+2^{2}\Vert f\Vert_{L^{1}(\mathbb{R}^{n})}\int_{\mathbb{R}%
^{n}\setminus B(0,1)}\frac{dh}{|h|^{n+1}}\\
& \lesssim\Vert\nabla^{2}_xf\Vert_{L^{1}(\mathbb{R}^{n})}+\Vert f\Vert
_{L^{1}(\mathbb{R}^{n})}.
\end{align*}
The case $0<s<1$ is similar. We omit the details.
\end{proof}

\section{The Homogeneous Case}

\label{section traces}
In this section, we prove Theorems \ref{theorem higher order} and \ref{theorem p}. For simplicity of exposition, we present a version of Theorem
\ref{theorem higher order} in the second order case.

\begin{theorem}
\label{theorem trace B11}Let $f\in B^{1,1}(\mathbb{R}^{n})$ and let $u$ be defined as in \eqref{function u}. Then 
\begin{equation}
\Vert \nabla^2 u\Vert_{L^{1}(\mathbb{R}_{+}^{n+1})}\lesssim|f|_{B^{1,1}%
(\mathbb{R}^{n})}. \label{trace estimate B11}%
\end{equation}

\end{theorem}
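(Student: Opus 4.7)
The plan is to bound each entry of $\nabla^{2}u$ separately, splitting into three types: the pure spatial second derivatives $\frac{\partial^{2}u}{\partial x_{i}\partial x_{j}}$, the mixed derivatives $\frac{\partial^{2}u}{\partial x_{i}\partial t}$, and the pure time derivative $\frac{\partial^{2}u}{\partial t^{2}}$. The three main tools are Uspenski\u{\i}'s ansatz in the form \eqref{usp-prime}, the parabolic identity $\partial_{t}u=2t\Delta u$ coming from \eqref{heat equation}, and the semigroup identity $W_{t}=W_{t/\sqrt{2}}\ast W_{t/\sqrt{2}}$.

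For the pure spatial case I would directly invoke \eqref{usp-prime} to write
\begin{equation*}
\frac{\partial^{2}u}{\partial x_{i}\partial x_{j}}(x,t)=\frac{1}{2t^{n+2}}\int_{\mathbb{R}^{n}}\frac{\partial^{2}W_{1}}{\partial x_{i}\partial x_{j}}(h/t)\,\Delta_{h}^{2}f(x-h)\,dh,
\end{equation*}
using that $\frac{\partial^{2}W_{1}}{\partial x_{i}\partial x_{j}}$ is even and has mean zero. Applying Fubini (absorbing the $x$-integration into $\|\Delta_{h}^{2}f\|_{L^{1}(\mathbb{R}^{n})}$) and the change of variables $h=ts$, the $L^{1}(\mathbb{R}_{+}^{n+1})$ norm reduces to
\begin{equation*}
\int_{\mathbb{R}^{n}}\left|\frac{\partial^{2}W_{1}}{\partial x_{i}\partial x_{j}}(s)\right|\int_{0}^{\infty}\frac{\|\Delta_{ts}^{2}f\|_{L^{1}(\mathbb{R}^{n})}}{t^{2}}\,dt\,ds.
\end{equation*}
The inner integral, after the further change $r=t|s|$, is at most $|s|\,|f|_{B^{1,1}(\mathbb{R}^{n})}^{\infty}$, and the outer integral is finite by rapid decay of the Gaussian.

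For the mixed and pure time derivatives I would use \eqref{heat equation} to rewrite
\begin{equation*}
\frac{\partial^{2}u}{\partial x_{i}\partial t}=2t\,\frac{\partial}{\partial x_{i}}\Delta u,\qquad\frac{\partial^{2}u}{\partial t^{2}}=2\Delta u+4t^{2}\Delta^{2}u.
\end{equation*}
The $2\Delta u$ summand falls under the first case. For the remaining two terms I apply the semigroup identity to decompose $u=W_{t/\sqrt{2}}\ast W_{t/\sqrt{2}}\ast f$ and distribute the resulting three or four spatial derivatives so that exactly two of the form $\partial^{2}/\partial x_{j}^{2}$ land on one Gaussian factor (making Uspenski\u{\i}'s ansatz applicable to that factor), while the remaining one (in the mixed case) or two (in the $t^{2}\Delta^{2}u$ case) derivatives land on the other Gaussian factor. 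I then bound this second factor in $L^{1}(\mathbb{R}^{n})$ using $\|\partial_{x}^{\beta}W_{t/\sqrt{2}}\|_{L^{1}(\mathbb{R}^{n})}\lesssim t^{-|\beta|}$. The $t$-powers $t$ and $t^{2}$ produced by \eqref{heat equation} exactly cancel these negative powers of $t$, and the argument reduces to the same scaling and change of variables as in the pure spatial case.

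The main technical point is this bookkeeping of $t$-powers: the factor $t^{|\beta|}$ coming from \eqref{heat equation} must precisely match the $t^{-|\beta|}$ coming from the $L^{1}$-norm of the Gaussian factor that absorbs the extra derivatives, so that after the change $h=ts$ the residual $t$-integrand is exactly $t^{-2}\|\Delta_{ts}^{2}f\|_{L^{1}(\mathbb{R}^{n})}$ and the equivalent seminorm $|f|_{B^{1,1}(\mathbb{R}^{n})}^{\infty}$ emerges. Beyond this, the remaining steps are routine applications of Fubini and Gaussian decay.
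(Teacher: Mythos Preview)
Your proposal is correct and follows essentially the same strategy as the paper: Uspenski\u{\i}'s ansatz for the pure spatial derivatives, and the combination of \eqref{heat equation} with the semigroup identity $W_t=W_{t/\sqrt2}\ast W_{t/\sqrt2}$ for the mixed term. The only minor difference is that for $\partial_t^2 u$ the paper applies the ansatz directly to $\partial_t^2 W_t$ (which is even in $x$ with mean zero), whereas you first convert via \eqref{heat equation} to $2\Delta u+4t^2\Delta^2 u$ and then invoke the semigroup; both routes work, the paper's being slightly more direct here while yours is closer in spirit to the general higher-order argument.
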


\begin{lemma}
\label{lemma heat integral}Let $\alpha\in\mathbb{N}_{0}^{n}$ and
$b\in\mathbb{R}$ be such that $n+|\alpha|-b-1>0$. Then for every
$x\in\mathbb{R}^{n}\setminus\{0\}$,%
\[
\int_{0}^{\infty}t^{b}\left\vert \frac{\partial^{\alpha}W_{t}}{\partial
x^{\alpha}}(x)\right\vert \,dt\lesssim\frac{1}{|x|^{n+|\alpha|-b-1}}%
\]
and%
\[
\int_{\mathbb{R}^{n}}\left\vert \frac{\partial^{\alpha}W_{t}}{\partial
x^{\alpha}}(x)\right\vert \,dx\lesssim\frac{1}{t^{|\alpha|}}.
\]

\end{lemma}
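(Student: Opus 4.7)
The plan is to exploit the explicit Gaussian structure of $W$ together with the scaling $\frac{\partial^{\alpha} W_{t}}{\partial x^{\alpha}}(x) = t^{-n-|\alpha|}\,\frac{\partial^{\alpha} W}{\partial x^{\alpha}}(x/t)$, which follows directly from the definition \eqref{gaussian}.

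First, I would observe that since $W(y)$ is a constant multiple of $e^{-|y|^{2}/4}$, each derivative $\frac{\partial^{\alpha}W}{\partial x^{\alpha}}(y)$ equals $P_{\alpha}(y)\,e^{-|y|^{2}/4}$ for some polynomial $P_{\alpha}$ of degree $|\alpha|$. Absorbing the polynomial into the exponential gives the pointwise bound
\[
\left|\frac{\partial^{\alpha}W}{\partial x^{\alpha}}(y)\right|\lesssim e^{-|y|^{2}/8}\quad\text{for all }y\in\mathbb{R}^{n},
\]
and therefore, by the scaling identity above,
\[
\left|\frac{\partial^{\alpha}W_{t}}{\partial x^{\alpha}}(x)\right|\lesssim t^{-n-|\alpha|}\,e^{-|x|^{2}/(8t^{2})}.
\]

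For the second inequality I would simply change variables $y=x/t$ in $\int_{\mathbb{R}^{n}}\bigl|\frac{\partial^{\alpha}W_{t}}{\partial x^{\alpha}}(x)\bigr|\,dx$, which produces a factor $t^{-|\alpha|}$ times the finite integral $\int_{\mathbb{R}^{n}}|\partial^{\alpha}W(y)|\,dy$. For the first inequality, I would insert the pointwise bound and substitute $u=|x|/t$, so that $dt=-|x|u^{-2}\,du$, obtaining
\[
\int_{0}^{\infty}t^{b}\left|\frac{\partial^{\alpha}W_{t}}{\partial x^{\alpha}}(x)\right|dt\;\lesssim\;|x|^{b-n-|\alpha|+1}\int_{0}^{\infty}u^{n+|\alpha|-b-2}\,e^{-u^{2}/8}\,du.
\]
The Gaussian guarantees convergence at $u=\infty$, while the hypothesis $n+|\alpha|-b-1>0$ is exactly what is needed to make the exponent $n+|\alpha|-b-2$ strictly greater than $-1$, ensuring integrability at $u=0$. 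This yields the claimed bound.

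Neither step presents a real obstacle; the only point requiring any care is tracking the exponents correctly in the scaling and confirming that the hypothesis $n+|\alpha|-b-1>0$ is precisely the condition that makes the transformed one-dimensional integral converge near the origin.
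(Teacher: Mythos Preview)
Your proof is correct and follows essentially the same approach as the paper: both use the scaling identity $\frac{\partial^{\alpha}W_{t}}{\partial x^{\alpha}}(x)=t^{-n-|\alpha|}\frac{\partial^{\alpha}W}{\partial x^{\alpha}}(x/t)$, the change of variables $y=x/t$ for the spatial integral, and the substitution $u=|x|/t$ (equivalently the paper's $t=|x|r^{-1}$) for the $t$-integral, with the hypothesis $n+|\alpha|-b-1>0$ ensuring integrability at the origin and Gaussian decay handling infinity. The only cosmetic difference is that you make explicit the bound $|\partial^{\alpha}W(y)|\lesssim e^{-|y|^{2}/8}$, whereas the paper simply invokes that $W$ is a Gaussian.
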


\begin{proof}
The change of variables $t=|x|r^{-1}$, $dt=-|x|r^{-2}dr$ yields%
\begin{align*}
\int_{0}^{\infty}t^{b}\left\vert \frac{\partial^{\alpha}W_{t}}{\partial
x^{\alpha}}(x)\right\vert \,dt &  =\int_{0}^{\infty}\frac{t^{b}}%
{t^{n+|\alpha|}}\left\vert \frac{\partial^{\alpha}W}{\partial x^{\alpha}%
}(xt^{-1})\right\vert \,dt\\
&  =\frac{1}{|x|^{n+|\alpha|-b-1}}\int_{0}^{\infty}r^{n+|\alpha|-b-2}%
\left\vert \frac{\partial^{\alpha}W}{\partial x^{\alpha}}(rx/|x|)\right\vert
\,dr.
\end{align*}
Since $n+|\alpha|-b-2>-1$ we have that $r^{n+|\alpha|-b-2}$ is integrable near
$0$, which together with the fact that $W$ is a Gaussian, gives convergence of
the integral on the right-hand side.

The facts that $\frac{\partial^{\alpha}W_{t}}{\partial x^{\alpha}}(x)=\frac
{1}{t^{n+|\alpha|}}\frac{\partial^{\alpha}W}{\partial x^{\alpha}}(xt^{-1})$,
that $W$ is a Gaussian, and the change of variables $y=xt^{-1}$, $dy=t^{-n}dx$ imply
\[
\int_{\mathbb{R}^{n}}\left\vert \frac{\partial^{\alpha}W_{t}}{\partial
x^{\alpha}}(x)\right\vert \,dx=\frac{1}{t^{|\alpha|}}\int_{\mathbb{R}^{n}%
}\left\vert \frac{\partial^{\alpha}W}{\partial x^{\alpha}}(y)\right\vert
\,dy\lesssim\frac{1}{t^{|\alpha|}}.
\]

\end{proof}

We turn to the proof of Theorem \ref{theorem trace B11}

\begin{proof}[Proof of Theorem \ref{theorem trace B11}] Given $f\in B^{1,1}(\mathbb{R}^{n})$, let $u:=W_t*f$, where $W_t$ is defined in \eqref{gaussian}. 
\textbf{Step 1: } In this step we estimate the $L^1$ norms of $\frac{\partial^{2}u}{\partial x_{i}\partial x_{j}}$ and $\frac{\partial^{2}u}{\partial t^2}$. For any $i,j=1,\ldots n$, one has
\[
\frac{\partial^{2}u}{\partial x_{i}\partial x_{j}}(x,t)=\int_{\mathbb{R}^{n}%
}\frac{\partial^{2}W_{t}}{\partial x_{i}\partial x_{j}}(x-h)f(h)\,dh=\int%
_{\mathbb{R}^{n}}\frac{\partial^{2}W_{t}}{\partial x_{i}\partial x_{j}%
}(h)f(x-h)\,dh.
\]
Making use of the fact that
\[
\frac{\partial^{2}W_{t}}{\partial x_{i}\partial x_{j}}(-h)=\frac{\partial
^{2}W_{t}}{\partial x_{i}\partial x_{j}}(h)
\]
(the second order partial derivatives purely in the trace variable of the Gaussian 
kernel, even mixed, are even functions), by a change of variables, one also
has
\[
\frac{\partial^{2}u}{\partial x_{i}\partial x_{j}}(x,t)=\int_{\mathbb{R}^{n}%
}\frac{\partial^{2}W_{t}}{\partial x_{i}\partial x_{j}}(h)f(x+h)\,dh,
\]
Since
\[
\int_{\mathbb{R}^{n}}\frac{\partial^{2}W_{t}}{\partial x_{i}\partial x_{j}%
}(h)f(x)\,dh=0,
\]
this means one can write
\begin{equation}
\frac{\partial^{2}u}{\partial x_{i}\partial x_{j}}(x,t)=\frac{1}{2}%
\int_{\mathbb{R}^{n}}\frac{\partial^{2}W_{t}}{\partial x_{i}\partial x_{j}%
}(h)[f(x+h)+f(x-h)-2f(x)]\,dh.\label{100}%
\end{equation}
In particular, we can estimate
\begin{align*}
\int_{\mathbb{R}_{+}^{n+1}} &  \left\vert \frac{\partial^{2}%
u}{\partial x_{i}\partial x_{j}}(x,t)\right\vert \,dxdt\\
&  \leq\frac{1}{2}\int_{0}^{\infty}\int_{\mathbb{R}^{n}}\int_{\mathbb{R}^{n}%
}\left\vert \frac{\partial^{2}W_{t}}{\partial x_{i}\partial x_{j}%
}(h)\right\vert |f(x+h)+f(x-h)-2f(x)|\,dhdxdt\\
&  =\frac{1}{2}\int_{\mathbb{R}^{n}}\int_{\mathbb{R}^{n}}\int_{0}^{\infty
}\left\vert \frac{\partial^{2}W_{t}}{\partial x_{i}\partial x_{j}%
}(h)\right\vert \,dt|f(x+h)+f(x-h)-2f(x)|\,dhdx.
\end{align*}
By Lemma \ref{lemma heat integral}, 
\begin{equation}
\int_{0}^{\infty}\left\vert \frac{\partial^{2}W_{t}}{\partial x_{i}\partial
x_{j}}(h)\right\vert \,dt\lesssim\frac{1}{|h|^{n+1}}.\label{102}%
\end{equation}
Therefore
\begin{align*}
\int_{\mathbb{R}_{+}^{n+1}} &  \left\vert \frac{\partial^{2}%
u}{\partial x_{i}\partial x_{j}}(x,t)\right\vert \,dxdt\\
&  \lesssim\int_{\mathbb{R}^{n}}\int_{\mathbb{R}^{n}}\frac
{|f(x+h)+f(x-h)-2f(x)|}{|h|^{n+1}}\,dhdx.
\end{align*}
Since $\frac{\partial^{2}W_{t}}{\partial t^{2}}$ is even and integrates to zero, reasoning as before, we can write
\[
\frac{\partial^{2}u}{\partial t^{2}}(x,t)=\frac{1}{2}\int_{\mathbb{R}^{n}%
}\frac{\partial^{2}W_{t}}{\partial t^{2}}(h)[f(x+h)+f(x-h)-2f(x)]\,dh.
\]
Since $\frac{\partial^{2}W_{t}}{\partial t^{2}}$ can be written as a linear combination of 
$t^{-2}W_t$, $t^{-1}(x_it^{-1})\frac{\partial W_{t}}{\partial x_i}$, and 
$(x_it^{-1})(x_jt^{-1})\frac{\partial^{2}W_{t}}{\partial x_ix_j}$, reasoning as in the proof of Lemma \ref{lemma heat integral}, we have that
\begin{equation}
\int_{0}^{\infty}\left\vert \frac{\partial^{2}W_{t}}{\partial t^2}(h)\right\vert \,dt\lesssim\frac{1}{|h|^{n+1}}.\label{102a}%
\end{equation}
We can now continue as before to obtain the estimate for this derivative.

\textbf{Step 2: } In this step we estimate the $L^1$ norm of $\frac{\partial^{2}u}{\partial t\partial x_{j}}$. 
For the mixed derivatives involving $t$, one computes
\[
\frac{\partial^{2}u}{\partial t\partial x_{j}}(x,t)=\int_{\mathbb{R}^{n}}%
\frac{\partial W_{t}}{\partial t}(h)\frac{\partial f}{\partial x_{j}%
}(x-h)\,dh.
\]
For fixed $x$, define $g(h):=f(x-h)$. Then
\[
\frac{\partial f}{\partial x_{j}}(x-h)=-\frac{\partial g}{\partial h_{j}}(h),
\]
an integration by parts yields
\[
\frac{\partial^{2}u}{\partial t\partial x_{j}}(x,t)=\int_{\mathbb{R}^{n}}%
\frac{\partial^{2}W_{t}}{\partial t\partial x_{j}}(h)f(x-h)\,dh.
\]
Here, $\frac{\partial^{2}W_{t}}{\partial t\partial x_{j}}$ is not an even
function (and, in fact, it is odd). 
Since
\[
\frac{\partial W_{t}}{\partial t}=2t\Delta W_{t},
\]
we can write%
\[
\frac{\partial^{2}W_{t}}{\partial t\partial x_{i}}=2t\sum_{j=1}^{n}%
\frac{\partial^{3}W_{t}}{\partial x_{i}\partial^{2}x_{j}}.
\]
The semi-group property of the heat extension, which is just a manipulation of
the Fourier transform (see (\ref{gaussian})), leads to the identity\footnote{We have
\[
\mathcal{F}(W_{t})(\xi)=e^{-4t^{2}\pi^{2}|\xi|^{2}}=e^{-4(t/\sqrt{2})^{2}%
\pi^{2}|\xi|^{2}}e^{-4(t/\sqrt{2})^{2}\pi^{2}|\xi|^{2}}=\mathcal{F}%
(W_{t/\sqrt{2}})\mathcal{F}(W_{t/\sqrt{2}}).
\]
}%
\begin{equation}\label{semi-group}
\frac{\partial^{3}W_{t}}{\partial x_{i}\partial^{2}x_{j}}=\frac{\partial
W_{t/\sqrt{2}}}{\partial x_{i}}\ast\frac{\partial^{2}W_{t/\sqrt{2}}}{\partial
x_{j}^{2}}.
\end{equation}
Since $\frac{\partial^{2}W_{t/\sqrt{2}}}{\partial x_{j}^{2}}$ is even and has
zero average in the trace variable $x$, we can write%
\begin{equation}
\left(  \frac{\partial^{2}W_{t/\sqrt{2}}}{\partial x_{j}^{2}}\ast f\right)
(x)=\frac{1}{2}\int_{\mathbb{R}^{n}}\frac{\partial^{2}W_{t/\sqrt{2}}}{\partial
x_{j}^{2}}(h)\Delta_{h}^{2}f(x-h)\,dh,\label{c4}%
\end{equation}
where $\Delta^2_hf$ is as defined in \eqref{difference}.  In turn,%
\begin{equation}\label{u1}
\left(  \frac{\partial W_{t/\sqrt{2}}}{\partial x_{i}}\ast\frac{\partial
^{2}W_{t/\sqrt{2}}}{\partial x_{j}^{2}}\ast f\right)  (x)=\frac{1}{2}%
\int_{\mathbb{R}^{n}}\frac{\partial W_{t/\sqrt{2}}}{\partial x_{i}}%
(x-y)\int_{\mathbb{R}^{n}}\frac{\partial^{2}W_{t/\sqrt{2}}}{\partial x_{j}%
^{2}}(h)\Delta_{h}^{2}f(y-h)\,dhdy.
\end{equation}
In conclusion, we have%
\[
\frac{\partial^{2}u}{\partial t\partial x_{i}}(x,t)=\sum_{j=1}^{n}%
\int_{\mathbb{R}^{n}}\frac{\partial W_{t/\sqrt{2}}}{\partial x_{i}}%
(x-y)\int_{\mathbb{R}^{n}}t\frac{\partial^{2}W_{t/\sqrt{2}}}{\partial
x_{j}^{2}}(h)\Delta_{h}^{2}f(y-h)\,dhdy.
\]
Integrating this quantity over $\mathbb{R}_{+}^{n+1}$ and using Fubini's
theorem yields%
\begin{align*}
\int_{\mathbb{R}_{+}^{n+1}}&\left\vert \frac{\partial^{2}u}{\partial t\partial
x_{i}}(x,t)\right\vert \,dxdt\\
& \lesssim\sum_{j=1}^{n}\int_{\mathbb{R}^{n}}\left\vert \frac{\partial
W_{t/\sqrt{2}}}{\partial x_{i}}(z)\right\vert \,dz\int_{\mathbb{R}^{n}}%
\int_{\mathbb{R}^{n}}\int_{0}^{\infty}t\left\vert \frac{\partial^{2}%
W_{t/\sqrt{2}}}{\partial x_{j}^{2}}(h)\right\vert \,dt\left\vert \Delta
_{h}^{2}f(y-h)\right\vert \,dhdy\\
& \lesssim\int_{\mathbb{R}^{n}}\int_{\mathbb{R}^{n}}
\frac{\left\vert \Delta_{h}^{2}f(y-h)\right\vert}{|h|^{n+1}} \,dhdy\\
&=\int_{\mathbb{R}^{n}}\int_{\mathbb{R}^{n}}
\frac{\left\vert f(y+h)-2f(y)+f(y-h)\right\vert}{|h|^{n+1}} \,dhdy,
\end{align*}
where in the last inequality we applied twice Lemma \ref{lemma heat integral}.
\end{proof}

\begin{remark}\label{remark pure derivatives}
The proof in Step 1 is classical and follows Uspenski\u{\i}'s ansatz \eqref{usp}. Note that in Step 1, we only used the fact that the kernel $W_t$ is even and decays sufficiently fast at infinity for \eqref{102} and \eqref{102a} to hold. In particular, in this step, we could replace the Gaussian function $W$ with the Poisson kernel $P$ or with an even mollifier $\varphi\in C^\infty_c(\mathbb{R}^n)$. 
In contrast, Step 2 uses the properties of the Gaussian kernel $W_t$ in a crucial way.
\end{remark}

We next prove a preliminary version of Theorem \ref{theorem higher order} in the inhomogeneous case.

\begin{theorem}
\label{theorem higher order inhomogeneous}Let $m\in\mathbb{N}_{0}$ and $-1<a<m$. Suppose
that $f\in B^{m-a,1}(\mathbb{R}^{n})$ and let $u$ be given by
\eqref{function u}. Then
\begin{equation}
\int_{\mathbb{R}_{+}^{n+1}}t^{a}|\nabla^{m+1}u(x,t)|\;dtdx\lesssim\left\vert
f\right\vert _{B^{m-a,1}(\mathbb{R}^{n})}.\label{main estimate1}%
\end{equation}
\end{theorem}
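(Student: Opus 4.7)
The plan is to bound each entry of $t^{a}\nabla^{m+1}u$ in $L^{1}(\mathbb{R}_{+}^{n+1})$ separately, following the three-step scheme of the proof of Theorem~\ref{theorem trace B11} but iterated to higher order. Since $f\in B^{m-a,1}(\mathbb{R}^{n})\subset L^{1}(\mathbb{R}^{n})$, the convolution \eqref{function u} is smooth on $\mathbb{R}_{+}^{n+1}$, so all differentiations and interchanges of integration below are justified. A typical entry has the form $\frac{\partial^{m+1}u}{\partial t^{j}\partial x^{\gamma}}$ with $j+|\gamma|=m+1$, and by linearity it suffices to control the weighted $L^{1}$ norm of each such term by $|f|_{B^{m-a,1}(\mathbb{R}^{n})}$.

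The first step eliminates the normal derivatives via the PDE identity \eqref{heat equation}. Since $\partial_{t}u=2t\Delta u$, a straightforward induction yields
\[
\frac{\partial^{j}u}{\partial t^{j}}=\sum_{s=\lceil j/2\rceil}^{j}c_{j,s}\,t^{\,2s-j}\,\Delta^{s}u,
\]
so every entry of $\nabla^{m+1}u$ becomes a linear combination of terms of the form $t^{r}\,\partial_{x}^{\eta}u$ with $r=2s-j\ge 0$ and $|\eta|=m+1+r$. Multiplying by $t^{a}$ reduces \eqref{main estimate1} to the family of estimates
\[
\int_{\mathbb{R}_{+}^{n+1}}t^{a+r}\bigl|\partial_{x}^{\eta}u(x,t)\bigr|\,dx\,dt\lesssim|f|_{B^{m-a,1}(\mathbb{R}^{n})}.
\]

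The second step combines the semigroup identity $W_{t}=W_{t/\sqrt{2}}\ast W_{t/\sqrt{2}}$, already used in \eqref{semi-group}, with an allocation of derivatives that isolates one pure second-order factor $\partial^{2}_{x_{i}x_{j}}W_{t/\sqrt{2}}$ (even and mean-zero in $h$), puts the maximal admissible number $|\alpha|$ of derivatives on $f$, and places the remaining $|\gamma'|=|\eta|-2-|\alpha|$ on the other Gaussian factor. Choosing $|\alpha|=m-a-1$ when $m-a\in\mathbb{N}$ and $|\alpha|=\lfloor m-a\rfloor$ otherwise makes the residual Besov index $s:=m-a-|\alpha|$ lie in $(0,1]$, so that by definition of the Besov seminorm $\partial_{x}^{\alpha}f$ is controlled in $B^{s,1}(\mathbb{R}^{n})$ by $|f|_{B^{m-a,1}(\mathbb{R}^{n})}$. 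Uspenski\u{\i}'s ansatz, exactly as in \eqref{100} and \eqref{c4}--\eqref{u1}, then rewrites the middle factor as
\[
\partial^{2}_{x_{i}x_{j}}W_{t/\sqrt{2}}\ast\partial_{x}^{\alpha}f(y)=\tfrac{1}{2}\int_{\mathbb{R}^{n}}\partial^{2}_{x_{i}x_{j}}W_{t/\sqrt{2}}(h)\,\Delta_{h}^{2}(\partial_{x}^{\alpha}f)(y-h)\,dh.
\]
Taking absolute values, applying Fubini, and invoking Lemma~\ref{lemma heat integral} twice (first $\int_{\mathbb{R}^{n}}|\partial_{x}^{\gamma'}W_{t/\sqrt{2}}(z)|\,dz\lesssim t^{-|\gamma'|}$ in $x$, then $\int_{0}^{\infty}t^{\,a+r-|\gamma'|}|\partial^{2}_{x_{i}x_{j}}W_{t/\sqrt{2}}(h)|\,dt\lesssim|h|^{-(n+s)}$ in $t$, with $s=1-(a+r-|\gamma'|)$) reduces the estimate to $\int_{\mathbb{R}^{n}}|h|^{-(n+s)}\|\Delta_{h}^{2}\partial_{x}^{\alpha}f\|_{L^{1}(\mathbb{R}^{n})}\,dh$, which---using $\|\Delta_{h}^{2}g\|_{L^{1}}\le 2\|\Delta_{h}g\|_{L^{1}}$ to pass to first differences when $s<1$---equals $|\partial_{x}^{\alpha}f|_{B^{s,1}(\mathbb{R}^{n})}\lesssim|f|_{B^{m-a,1}(\mathbb{R}^{n})}$.

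The main obstacle is the multi-index bookkeeping: for each $(j,\gamma,r)$ one must verify both that $|\gamma'|\ge 0$ and that the time exponent $b:=a+r-|\gamma'|$ lies in the range $b<n+1$ required by Lemma~\ref{lemma heat integral}; with the above allocation one in fact has $b\in[0,1)$. The only case in which $|\gamma'|$ becomes negative is the pure-spatial term ($r=0$) with $-1<a\le 0$, where $|\alpha|=m$ and $|\eta|=m+1$ leave only a single derivative to redistribute. In that subcase I would instead split off an odd, mean-zero factor $\partial_{x_{i}}W_{t/\sqrt{2}}$, apply a first-order Uspenski\u{\i} ansatz to produce $\Delta_{h}(\partial_{x}^{\alpha}f)$, and conclude with the analogous Fubini-plus-Lemma~\ref{lemma heat integral} computation, now yielding the kernel $|h|^{-(n-a)}$, which is exactly what is needed to recognize $|\partial_{x}^{\alpha}f|_{B^{-a,1}(\mathbb{R}^{n})}\lesssim|f|_{B^{m-a,1}(\mathbb{R}^{n})}$.
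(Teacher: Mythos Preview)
Your proposal is correct and follows essentially the same route as the paper: eliminate $t$-derivatives via \eqref{heat equation}, use the semigroup identity \eqref{semi-group} to isolate an even mean-zero second-order factor, apply Uspenski\u{\i}'s ansatz, and finish with two applications of Lemma~\ref{lemma heat integral}. The bookkeeping (in particular your claim that $b=a+r-|\gamma'|\in[0,1)$ and the identification of the exceptional subcase $r=0$, $-1<a<0$) is accurate.

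The only organizational difference worth noting is that the paper splits into the cases $m-a\in\mathbb{N}$ and $m-a\notin\mathbb{N}$ from the outset and, in the non-integer case, dispenses with the semigroup split entirely: since any $\partial_x^{\gamma}W_t$ with $|\gamma|\ge 1$ already has mean zero, one can introduce a \emph{first}-order difference $\Delta_{-h}(\partial_x^{\alpha}f)$ directly from the full kernel $\partial_x^{\gamma}W_t$ and conclude via a single application of Lemma~\ref{lemma heat integral}. This avoids both your detour through $\|\Delta_h^2 g\|_{L^1}\le 2\|\Delta_h g\|_{L^1}$ and the need for your exceptional subcase (which arises precisely because the second-order ansatz demands two spare derivatives that are not available when $|\alpha|=m$ and $r=0$). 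Your exceptional-subcase fix is in fact exactly the paper's non-integer argument specialized to that situation, so the two proofs are equivalent in content; the paper's split is simply a bit cleaner.
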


\begin{proof}
[Proof of Theorem \ref{theorem higher order inhomogeneous}] Let $s:=m-a>0$ and $f\in
B^{s,1}(\mathbb{R}^{n})$.

\textbf{Step 1: } Assume that $s=k\in\mathbb{N}$. Then for every $\alpha
\in\mathbb{N}_{0}^{n}$ with $|\alpha|=k-1$, we have that $\frac{\partial
^{\alpha}f}{\partial x^{\alpha}}\in B^{1,1}(\mathbb{R}^{n})$. The goal is
then to show an $L^{1}$ bound for the entries of the tensor
\[
t^{a}\nabla^{m+1}u,
\]
where we recall that $u=W_{t}\ast f$. To reduce to the case where all the
derivatives are in the trace variable $x$, consider a multi-index
$(\beta,l)\in\mathbb{N}_{0}^{n}\times\mathbb{N}_{0}$, with $|\beta|+l=m+1$.
Since $|\beta|+2l=m+1+l>k-1$, by applying $l$ times formula (\ref{heat equation}) , we can write%
\[
t^{a}\left(  \frac{\partial^{\beta}}{\partial x^{\beta}}\left(  \frac
{\partial^{l}W_{t}}{\partial t^{l}}\right)  \right)  \ast f
\]
as linear combinations
of
\[
t^{a+l}\frac{\partial^{\gamma}W_{t}}{\partial x^{\gamma}}\ast\frac
{\partial^{\alpha}f}{\partial x^{\alpha}}%
\]
for $l=0,\ldots,m+1$, and where the multi-indices $\alpha,\gamma\in
\mathbb{N}_{0}^{n}$ satisfy $|\alpha|=k-1$ and $|\gamma|=m+l-k+2$.
As in \eqref{semi-group}, the
semi-group property of the heat extension leads to the identity
\[
\frac{\partial^{\gamma}W_{t}}{\partial x^{\gamma}}=\frac
{\partial^{\gamma^{\prime}}W_{t/\sqrt{2}}}{\partial x^{\gamma^{\prime}}}\ast
\frac{\partial^{2}W_{t/\sqrt{2}}}{\partial x_{i}\partial x_{j}}%
\]
for some $i,j=1,\ldots,n$ and $\gamma^{\prime}\in\mathbb{N}_{0}^{n}$ such that
$|\gamma^{\prime}|=|\gamma|-2=m+l-k$.\ This shows that one actually estimates
\[
t^{a+l}\frac{\partial^{\gamma^{\prime}}W_{t/\sqrt{2}}}{\partial x^{\gamma^{\prime}}%
}\ast\frac{\partial^{2}W_{t/\sqrt{2}}}{\partial x_{i}\partial x_{j}}\ast
\frac{\partial^{\alpha}f}{\partial x^{\alpha}}%
\]
for some $l\in\{0,\ldots,m+1\}$, and where the multi-indices $\alpha,\gamma
\in\mathbb{N}_{0}^{n}$ satisfy $|\alpha|=k-1$ and $|\gamma^{\prime}|=m+l-k$.

Since 
$\frac{\partial^{2}W_{t/\sqrt{2}}}{\partial x_{i}\partial x_{j}}$ is even and has
zero average in the trace variable $x$, as in \eqref{u1} we can write%
\begin{align*}
t^{a+l}&\left(  \frac{\partial^{\gamma^{\prime}}W_{t/\sqrt{2}}}{\partial
x^{\gamma^{\prime}}}\ast\frac{\partial^{2}W_{t/\sqrt{2}}}{\partial x_{i}\partial
x_{j}}\ast\frac{\partial^{\alpha}f}{\partial x^{\alpha}}\right)
(x) \\
&=\frac{t^{a+l}}{2}\int_{\mathbb{R}^{n}}\frac{\partial^{\gamma^{\prime}%
}W_{t/\sqrt{2}}}{\partial x^{\gamma^{\prime}}}(x-y)\int_{\mathbb{R}^{n}}%
\frac{\partial^{2}W_{t/\sqrt{2}}}{\partial x_{i}\partial x_{j}}(h)\Delta_{h}^{2}%
\frac{\partial^{\alpha}f}{\partial x^{\alpha}}(y-h)\,dhdy.
\end{align*}
The point is that when one integrates this quantity over $\mathbb{R}_{+}%
^{n+1}$, by\ Lemma \ref{lemma heat integral},
\[
\int_{\mathbb{R}^{n}}\left\vert \frac{\partial^{\gamma^{\prime}}W_{t/\sqrt{2}}%
}{\partial x^{\gamma^{\prime}}}(x)\right\vert dx\lesssim\frac{1}{t^{|\gamma
^{\prime}|}}%
\]
and therefore, by Tonelli's theorem, one has as an upper bound some universal
constant times
\[
\int_{\mathbb{R}^{n}}\int_{\mathbb{R}^{n}}\int_{0}^{\infty}t^{a+l-|\gamma
^{\prime}|}\left\vert \frac{\partial^{2}W_{t/\sqrt{2}}}{\partial x_{i}\partial x_{j}%
}(h)\right\vert \left\vert \Delta_{h}^{2}\frac{\partial^{\alpha}f}{\partial
x^{\alpha}}(w-h)\right\vert \,dtdhdw.
\]
By Lemma \ref{lemma heat integral} again,
\[
\int_{0}^{\infty}t^{a+l-|\gamma^{\prime}|}\left\vert \frac{\partial^{2}%
W_{t/\sqrt{2}}}{\partial x_{i}\partial x_{j}}(h)\right\vert \;dt\lesssim\frac
{1}{|h|^{n+2-a-l+|\gamma^{\prime}|-1}}=\frac{1}{|h|^{n+1}}.
\]
\textbf{Step 2: }Assume that $s\notin\mathbb{N}$ and let $k:=\lfloor s\rfloor
$. Then for every $\alpha\in\mathbb{N}_{0}^{n}$ with $|\alpha|=k$, we have
that $\frac{\partial^{\alpha}f}{\partial x^{\alpha}}\in B^{s-k,1}%
(\mathbb{R}^{n})$. As  in Step 1, to estimate the $L^{1}$ norm of $t^{a}%
\nabla^{m+1}u$, it suffices to estimate the $L^{1}$ norm of
\[
t^{a+l}\frac{\partial^{\gamma}W_{t}}{\partial x^{\gamma}}\ast\frac
{\partial^{\alpha}f}{\partial x^{\alpha}}%
\]
for $l=0,\ldots,m+1$, and where the multi-indices $\alpha,\gamma\in
\mathbb{N}_{0}^{n}$ satisfy $|\alpha|=k$ and $|\gamma|=m+1+l-k$. Since
$\int_{\mathbb{R}^{n}}\frac{\partial^{\gamma}W_{t}}{\partial x^{\gamma}%
}(h)\,dh=0$, we can write%
\begin{align}\label{1000}
\left(  \frac{\partial^{\gamma}W_{t}}{\partial x^{\gamma}}\ast\frac
{\partial^{\alpha}f}{\partial x^{\alpha}}\right)  (x)=\int_{\mathbb{R}^{n}%
}\frac{\partial^{\gamma}W_{t}}{\partial x^{\gamma}}(h)\Delta_{-h}%
\frac{\partial^{\alpha}f}{\partial x^{\alpha}}(x)\,dh.
\end{align}
Multiplying both sides by $t^{a+l}$ and integrating in $(x,t)$ gives%
\[
\int_{\mathbb{R}_{+}^{n+1}}t^{a+l}\left\vert \left(  \frac
{\partial^{\gamma}W_{t}}{\partial x^{\gamma}}\ast\frac{\partial^{\alpha}%
f}{\partial x^{\alpha}}\right)  (x)\right\vert \,dxdt\leq\int_{\mathbb{R}^{n}%
}\int_{\mathbb{R}^{n}}\int_{0}^{\infty}t^{a+l}\left\vert \frac{\partial
^{\gamma}W_{t}}{\partial x^{\gamma}}(h)\right\vert \left\vert \Delta_{-h}%
\frac{\partial^{\alpha}f}{\partial x^{\alpha}}(x)\right\vert \,dtdhdx,
\]
where we used Tonelli's theorem. By Lemma \ref{lemma heat integral},
\[
\int_{0}^{\infty}t^{a+l}\left\vert \frac{\partial^{\gamma}W_{t}}{\partial
x^{\gamma}}(h)\right\vert \,dt\lesssim\frac{1}{|h|^{n+|\gamma|-a-l-1}}=\frac
{1}{|h|^{n+s-k}}.
\]
In turn,%
\[
\int_{\mathbb{R}_{+}^{n+1}}t^{a+l}\left\vert \left(  \frac
{\partial^{\gamma}W_{t}}{\partial x^{\gamma}}\ast\frac{\partial^{\alpha}%
f}{\partial x^{\alpha}}\right)  (x)\right\vert dxdt\lesssim\int_{\mathbb{R}%
^{n}}\int_{\mathbb{R}^{n}}\left\vert \Delta_{-h}\frac{\partial^{\alpha}%
f}{\partial x^{\alpha}}(x)\right\vert \frac{dh}{|h|^{n+s-k}}dx.
\]

\end{proof}

We next prove a preliminary version of Theorem \ref{theorem p} in the inhomogeneous case.

\begin{theorem}
\label{theorem p inhomogeneous}Let $m\in\mathbb{N}_{0}$, $1\leq p<\infty$, and
$-1<a<p(m+1)-1$. Suppose that $f\in B^{m+1-(a+1)/p,p}(\mathbb{R}^{n})$
and let $u$ be given by \eqref{function u}. Then
\[
\int_{\mathbb{R}_{+}^{n+1}}t^{a}|\nabla^{m+1}u(x,t)|^{p}dxdt\lesssim\left\vert
f\right\vert _{B^{m+1-(a+1)/p,p}(\mathbb{R}^{n})}^{p}.
\]

\end{theorem}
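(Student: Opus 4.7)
The plan is to follow the proof of Theorem \ref{theorem higher order inhomogeneous} essentially line by line, using Jensen's inequality in place of the Fubini swap that works for $p=1$. Set $s:=m+1-(a+1)/p$; by hypothesis $0<s<m+1$. As in the $p=1$ case, \eqref{heat equation} together with the semigroup identity \eqref{semi-group} lets me write every entry of $\nabla^{m+1}u$ as a finite linear combination of terms $t^\ell (\partial^\gamma W_t/\partial x^\gamma)*f$ with $\ell\in\{0,\ldots,m+1\}$ and $|\gamma|=m+1+\ell$. Distributing some derivatives onto $f$, this reduces the claim to bounding, for every admissible $\ell,\gamma,\alpha$,
$$I:=\int_{\mathbb{R}^{n+1}_+} t^{a+p\ell}\bigl|(K_t*g)(x)\bigr|^p\,dx\,dt,\qquad K_t:=\frac{\partial^{\gamma-\alpha}W_t}{\partial x^{\gamma-\alpha}},\quad g:=\frac{\partial^\alpha f}{\partial x^\alpha},$$
by $|f|_{B^{s,p}(\mathbb{R}^n)}^p$, where the choice of $|\alpha|$ depends on whether $s$ is an integer.

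When $s\notin\mathbb{N}$, I take $k:=\lfloor s\rfloor$ and $|\alpha|=k$, so $g\in\dot{B}^{s-k,p}(\mathbb{R}^n)$ with $s-k\in(0,1)$ and $|\gamma-\alpha|\geq 1$. Since $\int K_t=0$, I rewrite $(K_t*g)(x)=\int K_t(h)\Delta_{-h}g(x)\,dh$, and Jensen's inequality applied with the probability measure $|K_t|/\|K_t\|_{L^1}$ gives
$$|(K_t*g)(x)|^p\leq \|K_t\|_{L^1}^{p-1}\int |K_t(h)|\,|\Delta_{-h}g(x)|^p\,dh.$$
By Lemma \ref{lemma heat integral}, $\|K_t\|_{L^1}\lesssim t^{-|\gamma-\alpha|}$. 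Integrating in $x$, then in $t$ against $t^{a+p\ell}\,dt$ using Fubini, a second application of Lemma \ref{lemma heat integral} converts the resulting $t$-integral into a bound by $|h|^{-(n+p(s-k))}$, with the exponent arithmetic collapsing cleanly because $s=m+1-(a+1)/p$. Therefore $I\lesssim |g|_{B^{s-k,p}(\mathbb{R}^n)}^p\lesssim |f|_{B^{s,p}(\mathbb{R}^n)}^p$.

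When $s=k+1\in\mathbb{N}$, I take $|\alpha|=k$, so $g\in\dot{B}^{1,p}(\mathbb{R}^n)$ and $|\gamma-\alpha|\geq 2$. Following Step 1 of Theorem \ref{theorem higher order inhomogeneous}, the semigroup identity \eqref{semi-group} lets me factor
$$K_t=\frac{\partial^{\gamma'}W_{t/\sqrt{2}}}{\partial x^{\gamma'}}\ast \frac{\partial^2 W_{t/\sqrt{2}}}{\partial x_i\partial x_j}$$
with $|\gamma'|=|\gamma-\alpha|-2$, and the even/zero-mean property of $\partial^2_{ij}W_{t/\sqrt{2}}$ allows me to introduce second differences of $g$ as in \eqref{c4}. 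Young's inequality combined with Minkowski then gives
$$\|K_t*g\|_{L^p}\lesssim \|\partial^{\gamma'}W_{t/\sqrt{2}}\|_{L^1}\int \left|\frac{\partial^2 W_{t/\sqrt{2}}}{\partial x_i\partial x_j}(h)\right|\|\Delta_h^2 g\|_{L^p}\,dh.$$
Raising to the $p$-th power, using $\|\partial^{\gamma'}W_{t/\sqrt{2}}\|_{L^1}\lesssim t^{-|\gamma'|}$, applying Jensen to the $h$-integral against the probability measure proportional to $|\partial^2_{ij}W_{t/\sqrt{2}}|$ (whose $L^1$ mass is $\lesssim t^{-2}$), and integrating in $t$ against $t^{a+p\ell}\,dt$ by Fubini, Lemma \ref{lemma heat integral} produces precisely the exponent $n+p$ in $|h|$. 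Hence $I\lesssim |g|_{B^{1,p}(\mathbb{R}^n)}^p$; summing over $|\alpha|=s-1$ bounds this by $|f|_{B^{s,p}(\mathbb{R}^n)}^p$.

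The main obstacle is purely algebraic bookkeeping: verifying that the $t$-integral of the Gaussian derivative kernel yields exactly the $|h|$-weight $|h|^{-(n+p(s-k))}$ (respectively $|h|^{-(n+p)}$) required to match the Besov seminorm of $g$. The balancing of exponents is what forces, and is made possible by, the relation $s=m+1-(a+1)/p$.
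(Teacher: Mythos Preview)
Your proposal is correct and follows essentially the same approach as the paper's proof: the same reduction via \eqref{heat equation} to pure spatial derivatives of $W_t$, the same semigroup factorization in the integer case to isolate an even second-order kernel, and the same introduction of first/second differences. Your use of Jensen's inequality with the probability measure $|K_t|/\|K_t\|_{L^1}$ is exactly the paper's H\"older splitting $|K_t|=|K_t|^{1/p'}|K_t|^{1/p}$ in different clothing, and the exponent bookkeeping you describe matches the paper's computations line for line.
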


\begin{proof}
[Proof of Theorem \ref{theorem p inhomogeneous}]Let $s:=m+1-(a+1)/p>0$ and $f\in B^{s,p}(\mathbb{R}^{n})$. 
\textbf{Step 1: }
Assume that $s=k\in\mathbb{N}$. Then for every $\alpha
\in\mathbb{N}_{0}^{n}$ with $|\alpha|=k-1$, we have that $\frac{\partial
^{\alpha}f}{\partial x^{\alpha}}\in B^{1,p}(\mathbb{R}^{n})$. As in Step
1 of the proof of Theorem \ref{theorem higher order}, to obtain an $L^{p}$
bound for the entries of the tensor $t^{a/p}\nabla^{m+1}u$, it suffices to
estimate the $L^{p}$ norm of
\[
t^{a/p+l}\frac{\partial^{\gamma^{\prime}}W_{t/\sqrt{2}}}{\partial x^{\gamma^{\prime}%
}}\ast\frac{\partial^{2}W_{t/\sqrt{2}}}{\partial x_{i}\partial x_{j}}\ast
\frac{\partial^{\alpha}f}{\partial x^{\alpha}}%
\]
for some $l\in\{0,\ldots,m+1\}$, and where the multi-indices $\alpha,\gamma
\in\mathbb{N}_{0}^{n}$ satisfy $|\alpha|=k-1$ and $|\gamma^{\prime}|=m+l-k$.

Let
\begin{align*}
g(x):=\frac{1}{2}\int_{\mathbb{R}^{n}}\frac{\partial^{2}W_{t/\sqrt{2}}}{\partial
x_{j}^{2}}(h)\Delta_{h}^{2}f(x-h)\,dh.
\end{align*} 
By Young's
inequality for convolutions,%
\begin{align}
t^{a/p+l} &  \left\Vert \frac{\partial^{\gamma^{\prime}}W_{t/\sqrt{2}}}{\partial
x^{\gamma^{\prime}}}\ast\frac{\partial^{2}W_{t/\sqrt{2}}}{\partial x_{i}\partial
x_{j}}\ast\frac{\partial^{\alpha}f}{\partial x^{\alpha}}\right\Vert
_{L^{p}(\mathbb{R}^{n})}=t^{a/p+l}\left\Vert \frac{\partial^{\gamma^{\prime}%
}W_{t/\sqrt{2}}}{\partial x^{\gamma^{\prime}}}\ast g\right\Vert _{L^{p}%
(\mathbb{R}^{n})}\label{c2}\\
&  \leq t^{a/p+l}\left\Vert \frac{\partial^{\gamma^{\prime}}W_{t/\sqrt{2}}}{\partial
x^{\gamma^{\prime}}}\right\Vert _{L^{1}(\mathbb{R}^{n})}\left\Vert
g\right\Vert _{L^{p}(\mathbb{R}^{n})}\lesssim t^{a/p+l-|\gamma^{\prime}%
|}\left\Vert g\right\Vert _{L^{p}(\mathbb{R}^{n})},\nonumber
\end{align}
where we used Lemma \ref{lemma heat integral}. To estimate $\left\Vert
g\right\Vert _{L^{p}(\mathbb{R}^{n})}$, we write $\left\vert \frac
{\partial^{2}W_{t/\sqrt{2}}}{\partial x_{i}\partial x_{j}}\right\vert =\left\vert
\frac{\partial^{2}W_{t/\sqrt{2}}}{\partial x_{i}\partial x_{j}}\right\vert
^{1/p^{\prime}}\left\vert \frac{\partial^{2}W_{t/\sqrt{2}}}{\partial x_{i}\partial
x_{j}}\right\vert ^{1/p}$. By H\"{o}lder's inequality%
\begin{align*}
&  \left\vert g(x)\right\vert \leq\left(  \int_{\mathbb{R}^{n}}\left\vert
\frac{\partial^{2}W_{t/\sqrt{2}}}{\partial x_{i}\partial x_{j}}(h)\right\vert
dh\right)  ^{1/p^{\prime}}\left(  \int_{\mathbb{R}^{n}}\left\vert
\frac{\partial^{2}W_{t/\sqrt{2}}}{\partial x_{i}\partial x_{j}}(h)\right\vert
\left\vert \Delta_{h}^{2}\frac{\partial^{\alpha}f}{\partial x^{\alpha}%
}(x-h)\right\vert ^{p}dh\right)  ^{1/p}\\
&  \lesssim t^{-2/p^{\prime}}\left(  \int_{\mathbb{R}^{n}}\left\vert
\frac{\partial^{2}W_{t/\sqrt{2}}}{\partial x_{i}\partial x_{j}}(h)\right\vert
\left\vert \Delta_{h}^{2}\frac{\partial^{\alpha}f}{\partial x^{\alpha}%
}(x-h)\right\vert ^{p}dh\right)  ^{1/p},
\end{align*}
where we used again Lemma \ref{lemma heat integral}. Hence,%
\begin{equation}
\int_{\mathbb{R}^{n}}|g(x)|^{p}dx\lesssim t^{-2(p-1)}\int_{\mathbb{R}^{n}}%
\int_{\mathbb{R}^{n}}\left\vert \frac{\partial^{2}W_{t/\sqrt{2}}}{\partial
x_{i}\partial x_{j}}(h)\right\vert \left\vert \Delta_{h}^{2}\frac
{\partial^{\alpha}f}{\partial x^{\alpha}}(x-h)\right\vert ^{p}dhdx.\label{c3}%
\end{equation}
Raising both sides of (\ref{c2}) to power $p$, integrating in $t$, and
using (\ref{c3}) gives%
\begin{align*}
&  \int_{\mathbb{R}_{+}^{n+1}}t^{a+lp}\left\vert \left(
\frac{\partial^{\gamma^{\prime}}W_{t/\sqrt{2}}}{\partial x^{\gamma^{\prime}}}%
\ast\frac{\partial^{2}W_{t/\sqrt{2}}}{\partial x_{i}\partial x_{j}}\ast\frac
{\partial^{\alpha}f}{\partial x^{\alpha}}\right)  (x)\right\vert ^{p}dxdt\\
&  \lesssim\int_{\mathbb{R}^{n}}\int_{\mathbb{R}^{n}}\int_{0}^{\infty
}t^{a+lp-|\gamma^{\prime}|p-2(p-1)}\left\vert \frac{\partial^{2}W_{t/\sqrt{2}}%
}{\partial x_{i}\partial x_{j}}(h)\right\vert \left\vert \Delta_{h}^{2}%
\frac{\partial^{\alpha}f}{\partial x^{\alpha}}(x-h)\right\vert ^{p}dtdhdx.
\end{align*}
By Lemma \ref{lemma heat integral},
\[
\int_{0}^{\infty}t^{a+lp-|\gamma^{\prime}|p-2(p-1)}\left\vert \frac
{\partial^{2}W_{t/\sqrt{2}}}{\partial x_{i}\partial x_{j}}(h)\right\vert
\,dt\lesssim\frac{1}{|h|^{n+2-a-lp+|\gamma^{\prime}|p+2(p-1)-1}}=\frac
{1}{|h|^{n+p}}.
\]
In turn,%
\begin{align*}
\int_{\mathbb{R}_{+}^{n+1}}t^{a+lp}&\left\vert \left(
\frac{\partial^{\gamma^{\prime}}W_{t/\sqrt{2}}}{\partial x^{\gamma^{\prime}}}%
\ast\frac{\partial^{2}W_{t/\sqrt{2}}}{\partial x_{i}\partial x_{j}}\ast\frac
{\partial^{\alpha}f}{\partial x^{\alpha}}\right)  (x)\right\vert
^{p}dxdt\\
&\lesssim\int_{\mathbb{R}^{n}}\int_{\mathbb{R}^{n}}\left\vert \Delta
_{h}^{2}\frac{\partial^{\alpha}f}{\partial x^{\alpha}}(x-h)\right\vert ^{p}%
\frac{dh}{|h|^{n+p}}dx.
\end{align*}
\textbf{Step 2: } Assume that $s\notin\mathbb{N}$ and let $k:=\lfloor s\rfloor
$. Then for every $\alpha\in\mathbb{N}_{0}^{n}$ with $|\alpha|=k$, we have
that $\frac{\partial^{\alpha}f}{\partial x^{\alpha}}\in B^{s-k,p}%
(\mathbb{R}^{n})$. As in Step 2 of the proof of Theorem
\ref{theorem higher order}, to obtain $L^{p}$ bound for the entries of the
tensor $t^{a/p}\nabla^{m+1}u$, it suffices to estimate
\[
t^{a/p+l}\frac{\partial^{\gamma}W_{t}}{\partial x^{\gamma}}\ast\frac
{\partial^{\alpha}f}{\partial x^{\alpha}}%
\]
for $l=0,\ldots,m+1$, and where the multi-indices $\alpha,\gamma\in
\mathbb{N}_{0}^{n}$ satisfy $|\alpha|=k$ and $|\gamma|=m+1+l-k$. Writing
$\left\vert \frac{\partial^{\gamma}W_{t}}{\partial x^{\gamma}}\right\vert
=\left\vert \frac{\partial^{\gamma}W_{t}}{\partial x^{\gamma}}\right\vert
^{1/p^{\prime}}\left\vert \frac{\partial^{\gamma}W_{t}}{\partial x^{\gamma}%
}\right\vert ^{1/p}$, by H\"{o}lder's inequality and \eqref{1000}%
\begin{align*}
&  \left\vert \left(  \frac{\partial^{\gamma}W_{t}}{\partial x^{\gamma}}%
\ast\frac{\partial^{\alpha}f}{\partial x^{\alpha}}\right)  (x)\right\vert \\
&  \leq\left(  \int_{\mathbb{R}^{n}}\left\vert \frac{\partial^{\gamma}W_{t}%
}{\partial x^{\gamma}}(h)\right\vert dh\right)  ^{1/p^{\prime}}\left(
\int_{\mathbb{R}^{n}}\left\vert \frac{\partial^{\gamma}W_{t}}{\partial
x^{\gamma}}(h)\right\vert \left\vert \Delta_{-h}\frac{\partial^{\alpha}%
f}{\partial x^{\alpha}}(x)\right\vert ^{p}dh\right)  ^{1/p}\\
&  \lesssim t^{-|\gamma|/p^{\prime}}\left(  \int_{\mathbb{R}^{n}}\left\vert
\frac{\partial^{\gamma}W_{t}}{\partial x^{\gamma}}(h)\right\vert \left\vert
\Delta_{-h}\frac{\partial^{\alpha}f}{\partial x^{\alpha}}(x)\right\vert
^{p}dh\right)  ^{1/p},
\end{align*}
where we used Lemma \ref{lemma heat integral}. Multiplying both sides by
$t^{a/p+l}$, raising to power $p$, and integrating in $(x,t)$ gives%
\begin{align*}
&  \int_{\mathbb{R}_{+}^{n+1}}t^{a+lp}\left\vert \left(
\frac{\partial^{\gamma}W_{t}}{\partial x^{\gamma}}\ast\frac{\partial^{\alpha
}f}{\partial x^{\alpha}}\right)  (x)\right\vert ^{p}dxdt\\
&  \lesssim\int_{\mathbb{R}^{n}}\int_{\mathbb{R}^{n}}\int_{0}^{\infty
}t^{a+lp-(p-1)|\gamma|}\left\vert \frac{\partial^{\gamma}W_{t}}{\partial
x^{\gamma}}(h)\right\vert \left\vert \Delta_{-h}\frac{\partial^{\alpha}%
f}{\partial x^{\alpha}}(x)\right\vert ^{p}dtdhdx.
\end{align*}
By Lemma \ref{lemma heat integral},
\[
\int_{0}^{\infty}t^{a+lp-(p-1)|\gamma|}\left\vert \frac{\partial^{\gamma}%
W_{t}}{\partial x^{\gamma}}(h)\right\vert \,dt\lesssim\frac{1}{|h|^{n+|\gamma
|-a-lp+(p-1)|\gamma|-1}}=\frac{1}{|h|^{n+(s-k)p}}.
\]
In turn,
\[
\int_{\mathbb{R}_{+}^{n+1}}t^{a+lp}\left\vert \left(
\frac{\partial^{\gamma}W_{t}}{\partial x^{\gamma}}\ast\frac{\partial^{\alpha
}f}{\partial x^{\alpha}}\right)  (x)\right\vert ^{p}dxdt\lesssim\int%
_{\mathbb{R}^{n}}\int_{\mathbb{R}^{n}}\left\vert \Delta_{-h}\frac
{\partial^{\alpha}f}{\partial x^{\alpha}}(x)\right\vert ^{p}\frac
{dh}{|h|^{n+(s-k)p}}dx.
\]

\end{proof}

%

We conclude this section with the proofs of Theorems \ref{theorem higher order} and \ref{theorem p}.

\begin{proof}[Proof of Theorems \ref{theorem higher order} and \ref{theorem p}]

The only place that the assumption $f\in B^{m+1-(a+1)/p,p}(\mathbb{R}^{n})$ was used in the proofs of Theorems  \ref{theorem higher order} and  \ref{theorem p} was to ensure that $W_t*f$ is well-defined.  The proofs of Theorems \ref{theorem higher order} and \ref{theorem p} will therefore be complete if we can show this convolution is well-defined for $f\in \dot{B}^{m+1-(a+1)/p,p}(\mathbb{R}^{n})$. 

By \cite[Remark 17.27 on p.~556]{leoni-book-sobolev}, for $f\in \dot{B}^{m+1-(a+1)/p,p}(\mathbb{R}^{n})$ one has
\begin{align*}
f=u+v
\end{align*}
for $u \in L^p(\mathbb{R}^{n})$ and $v \in \dot{W}^{l,p}(\mathbb{R}^{n})$
 where we can take $l\in \mathbb{N}$ so large that $lp>n$.  As $W_t*u$ is well-defined for $u \in L^p(\mathbb{R}^{n})$, it remains to show that $W_t*v$ is well-defined. This follows from the fact that $v$ has polynomial growth by Theorem \ref{theorem growth} in the appendix below.

\end{proof}

\section{The Inhomogeneous Case}
\label{section non-homogeneous}
We now turn to the proof of Theorem \ref{theorem non-homogeneous}. The proof of Step 2 is an adaptation of Mironescu's
argument who studied the case $m=0$ \cite{mironescu2015}.

\begin{proof}
[Proof of Theorem \ref{theorem non-homogeneous}] \textbf{Step 1: } Assume that $a<m$. Let $\psi\in C^{\infty
}([0,\infty))$ be a nonnegative decreasing function such that $\psi=1$ in
$[0,1]$, $\psi(t)=0$ for $t\geq2$ and define $F(x,t)=\psi(t)u(x,t)$, where
$u=W_{t}\ast f$. By Tonelli's theorem and the change of variables
$z=yt^{-1}$,%
\[
\int_{\mathbb{R}^{n}}|u(x,t)|\,dx\leq\int_{\mathbb{R}^{n}}|f(x)|\,dx\int%
_{\mathbb{R}^{n}}W(z)\,dz=\int_{\mathbb{R}^{n}}|f(x)|\,dx.
\]
Since $0\leq\psi\leq1$ and $\psi(t)=0$ for $t\geq2$, it follows%
\begin{equation}
\int_{\mathbb{R}_{+}^{n+1}}t^{a}|F(x,t)|\,dxdt=\int_{0}^{\infty
}t^{a}\psi(t)\int_{\mathbb{R}^{n}}|u(x,t)|\,dxdt\leq\int_{0}^{2}t^{a}%
dt\int_{\mathbb{R}^{n}}|f(x)|\,dx.\label{be 400}%
\end{equation}
Observe that the first integral on the right-hand side is finite because
$a>-1$.

Since $\psi=1$ in $[0,1]$, $F(x,t)=u(x,t)$ for $t\leq1$, and so
$\operatorname*{Tr}(F)=\operatorname*{Tr}(u)=f$. It remains to estimate the
derivatives of $F$. Consider a multi-index $(\beta,l)\in\mathbb{N}_{0}%
^{n}\times\mathbb{N}_{0}$, with $|\beta|+l=m+1$. By the product rule%
\[
\frac{\partial^{\beta}}{\partial x^{\beta}}\left(  \frac{\partial^{l}%
F}{\partial t^{l}}\right)
\]
can be written as a linear combination of $\psi^{(l-j)}(t)\frac{\partial
^{\beta}}{\partial x^{\beta}}\left(  \frac{\partial^{j}u}{\partial t^{j}%
}\right)  $ for $j=0,\ldots,l$. As in Step 1 of the proof of Theorem
\ref{theorem higher order} we can use (\ref{heat equation}) to write
$\frac{\partial^{\beta}}{\partial x^{\beta}}\left(  \frac{\partial^{j}%
u}{\partial t^{j}}\right)  $ as a linear combination of $t^{i}\frac
{\partial^{\gamma}u}{\partial x^{\gamma}}$ with $i \in \{0,\ldots, j\}$, where the multi-index $\gamma
\in\mathbb{N}_{0}^{n}$ satisfies $|\gamma|=|\beta|+2i=m+1-l+2i$. There are now
two cases. If $-l+2i<0$, we use the Gagliardo--Nirenberg interpolation
inequality \cite[Theorem 12.85]{leoni-book-sobolev} to estimate%
\[
\int_{\mathbb{R}^{n}}|\nabla_x^{m+1-l+2i}u(x,t)|\,dx\lesssim\int_{\mathbb{R}^{n}%
}|u(x,t)|\,dx+\int_{\mathbb{R}^{n}}|\nabla_x^{m+1}u(x,t)|\,dx.
\]
In turn,%
\begin{align*}
\int_{\mathbb{R}_{+}^{n+1}}&t^{a+i}|\psi^{(l-j)}(t)||\nabla
_x^{m+1-l+2i}u(x,t)|\,dxdt \\ & \lesssim\int_{0}^{2}t^{a+i}\int_{\mathbb{R}^{n}%
}|u(x,t)|\,dxdt+\int_{0}^{2}t^{a+i}\int_{\mathbb{R}^{n}}|\nabla_x^{m+1}%
u(x,t)|\,dxdt\\
& \lesssim\int_{0}^{2}t^{a}\int_{\mathbb{R}^{n}}|u(x,t)|\,dxdt+\int_{0}%
^{2}t^{a}\int_{\mathbb{R}^{n}}|\nabla_x^{m+1}u(x,t)|\,dxdt\\
& \lesssim\int_{\mathbb{R}^{n}}|f(x)|\,dx+\left\vert f\right\vert
_{B^{m-a,1}(\mathbb{R}^{n})}%
\end{align*}
by (\ref{main estimate}) and (\ref{be 400}). 

If $-l+2i\geq0$, we use (\ref{main estimate}) with $a$ replaced by $a+i$ and
$m$  by $m-l+2i$ to obtain%
\[
\int_{\mathbb{R}_{+}^{n+1}}t^{a+i}|\nabla^{m+1-l+2i}u(x,t)|\;dtdx\lesssim
\left\vert f\right\vert _{B^{m-a-l+i,1}(\mathbb{R}^{n})}.
\]
If $i=l$, we are done since $f\in B^{m-a,1}(\mathbb{R}^{n})$. Otherwise, we
use the fact that by Proposition \ref{proposition embedding},
\[
\left\vert f\right\vert _{B^{m-a-l+i,1}(\mathbb{R}^{n})}\lesssim\Vert
f\Vert_{L^{1}(\mathbb{R}^{n})}+\Vert\nabla_x^{\lfloor m-a-l+i\rfloor+1}f\Vert_{L^{1}%
(\mathbb{R}^{n})}\le \Vert f\Vert_{B^{m-a,1}(\mathbb{R}^{n})}.
\]
Similar estimates hold when $1\le |\beta|+l<m+1$. We omit the details. 

\textbf{Step 2: }Assume that $a=m$ and let
$f\in C_{c}^{\infty}(\mathbb{R}^{n})$. Consider a function $\varphi\in
C^{\infty}([0,\infty))$ such that $0\leq\varphi\leq1$, $\varphi=1$ in
$[0,1]$, and $\varphi=0$ in $[2,\infty)$. For $l\in\mathbb{N}$ define
$v_{l}(x,t):=f(x)\varphi(lt)$. By the properties of $\varphi$, $v_{l}%
(x,0)=f(x)$. The product rule implies that the entries of the tensor
$\nabla^{m+1}v_{l}$ can be written as linear combinations of the functions
\[
l^{i}\varphi^{(i)}(lt)\frac{\partial^{\beta}f}{\partial x^{\beta}}(x)
\]
for multi-indices $\beta\in\mathbb{N}_{0}^{n}$ and $i=0,\ldots,m+1-|\beta|$ if
$\beta\neq0$ or $i=1,\ldots,m+1$ if $\beta=0$. This, along with the change of
variables $r=lt$ leads to the estimate
\begin{align}
\int_{\mathbb{R}_{+}^{n+1}} &  t^{m}|\nabla^{m+1}v_{l}(x,t)|\,dxdt\nonumber\\
&  \lesssim\sum_{|\beta|=1}^{m+1}\sum_{i=0}^{m+1-|\beta|}\frac{1}{l^{m-i+1}%
}\int_{\mathbb{R}^{n}}\left\vert \frac{\partial^{\beta}f}{\partial x^{\beta}%
}(x)\right\vert \,dx\int_{0}^{\infty}r^{m}|\varphi^{(i)}(r)|\,dr\label{m100}\\
&  \;\;+\int_{\mathbb{R}^{n}}|f(x)|\,dx\int_{0}^{\infty}r^{m}|\varphi
^{(m+1)}(r)|\,dr.\label{m101}%
\end{align}
One observes that the quantity numbered by equation \eqref{m100} tends to zero as $l\rightarrow\infty$. Thus, if $f\neq0$, by taking $l$ large enough, we
can majorize the quantity \eqref{m100} by $\Vert f\Vert_{L^{1}(\mathbb{R}%
^{n})}$, while the quantity numbered by equation \eqref{m101} is just a
constant multiple of this norm. If $f=0$, we take $u=0$. This proves
\eqref{z1} in the case $f\in C_{c}^{\infty}(\mathbb{R}^{n})$. The general case
follows from the density of $C_{c}^{\infty}(\mathbb{R}^{n})$ in $L^{1}%
(\mathbb{R}^{n})$. We omit the details.
\end{proof}

\begin{remark} By taking $a=0$ in \eqref{be 400}, we have that $F\in \dot{W}_{}^{m+1,1}(\mathbb{R}_{+}^{n+1})\cap
L^{1}(\mathbb{R}^{n+1}_+)$, with 
$$|F|_{W_{}^{m+1,1}(\mathbb{R}_{+}^{n+1})}+\Vert F\Vert_{L^1(\mathbb{R}^{n+1}_+)}\lesssim\Vert f\Vert_{B^{m-a,1}(\mathbb{R}^{n})}.$$    
\end{remark}
\begin{remark}
Using the fact that for $v\in W_{m}^{m+1,1}(\mathbb{R}_{+})\cap C^{\infty
}([0,\infty))$, we have%
\[
v(0)=c\int_{0}^{\infty}t^{m}\frac{d^{(m+1)}v}{dt^{m+1}}(t)\,dt,
\]
for all $u\in W_{m}^{m+1,1}(\mathbb{R}_{+}^{n+1})\cap C^{\infty}(\mathbb{R}^{n}\times[0,\infty))$, we can write%
\[
u(x,0)=c\int_{0}^{\infty}t^{m}\frac{\partial^{(m+1)}u}{\partial t^{m+1}%
}(x,t)\,dt,
\]
and so,%
\[
\int_{\mathbb{R}^{n}}|u(x,0)|\,dx\lesssim\int_{0}^{\infty}\int_{\mathbb{R}^{n}%
}t^{m}\left\vert \frac{\partial^{(m+1)}u}{\partial t^{m+1}}(x,t)\right\vert
\,dxdt.
\]
By a reflection (see, e.g., \cite[Exercise 13.3]{leoni-book-sobolev}) and mollification argument, we have that for every function $u\in W_{m}^{m+1,1}%
(\mathbb{R}_{+}^{n+1})$, the trace of $u$ belongs to $L^{1}(\mathbb{R}^{n})$.
Together with the previous theorem, this shows that%
\[
\operatorname*{Tr}(W_{m}^{m+1,1}(\mathbb{R}_{+}^{n+1}))=L^{1}(\mathbb{R}^{n}).
\]

\end{remark}

When $k=0$, the proof of the following lemma is due to
Gmeineder, Raita, and Van Schaftingen
\cite{gmeineder-raita-van-schaftingen2022} and is an adaptation of Mironescu's
argument in Step 2 of the proof of Theorem \ref{theorem non-homogeneous} above. See also the paper of Demengel \cite{demengel-1984} and \cite[Theorem 18.43]{leoni-book-sobolev} for an alternative proof based on Gagliardo's original proof \cite{gagliardo1957}.
\begin{lemma}
\label{lemma normal L1}Let $m\in\mathbb{N}$ and $k\in\mathbb{N}_{0}$ with
$k<m$. Suppose that $g\in L^{1}(\mathbb{R}^{n})$. Then there exists $G\in
\dot{W}_{k}^{m+1,1}(\mathbb{R}_{+}^{n+1})$ such that $\operatorname*{Tr}%
(\,G)=0$, $\operatorname*{Tr}(\,\frac{\partial^{j}G}{\partial t^{j}})=0$ for
$j=1,\ldots,m-k-1$, $\operatorname*{Tr}(\,\frac{\partial^{m-k}G}{\partial
t^{m-k}})=g$, and%
\begin{equation}
\int_{\mathbb{R}_{+}^{n+1}}t^{k}|\nabla^{m+1}G(x,t)|\,dxdt\lesssim\Vert
g\Vert_{L^{1}(\mathbb{R}^{n})}.\label{estimate L1}%
\end{equation}

\end{lemma}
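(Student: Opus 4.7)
My plan is to mirror the adaptation of Mironescu's argument used in Step 2 of the proof of Theorem \ref{theorem non-homogeneous}: first establish the lemma for $g \in C_c^\infty(\mathbb{R}^n)$ via an explicit product ansatz, then pass to $L^1(\mathbb{R}^n)$ by a telescoping density argument.

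For $g \in C_c^\infty(\mathbb{R}^n)$, I would take
\[
G(x,t) := \frac{t^{m-k}}{(m-k)!}\, g(x)\, \varphi(lt),
\]
where $\varphi \in C^\infty([0,\infty))$ satisfies $0 \le \varphi \le 1$, $\varphi \equiv 1$ on $[0,1]$, and $\varphi \equiv 0$ on $[2,\infty)$, and $l \ge 1$ is to be chosen large, depending on $g$. The trace conditions follow immediately from the Leibniz rule: for $j \le m-k$, every term appearing in $\partial_t^j G(x,0)$ carries a factor of the form $\partial_t^i(t^{m-k}/(m-k)!)$ evaluated at $t=0$, which vanishes unless $i = m-k$. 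Hence $\partial_t^j G(x,0) = 0$ for $j < m-k$, while $\partial_t^{m-k} G(x,0) = g(x)\,\varphi(0) = g(x)$.

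For the weighted seminorm bound, another application of Leibniz writes each entry of $\nabla^{m+1}G$ as a linear combination of terms
\[
\frac{t^{m-k-i}}{(m-k-i)!}\, l^{j-i}\, \varphi^{(j-i)}(lt)\, \frac{\partial^\beta g}{\partial x^\beta}(x),
\]
with $|\beta|+j = m+1$ and $0 \le i \le \min(j, m-k)$. Integrating against $t^k\,dx\,dt$ and making the change of variables $r = lt$, each such term contributes $l^{j-m-1}$ times a constant depending only on $\varphi$ and on $\|\partial^\beta g/\partial x^\beta\|_{L^1(\mathbb{R}^n)}$. The terms with $j = m+1$ (hence $\beta = 0$) give an $l$-independent constant multiple of $\|g\|_{L^1(\mathbb{R}^n)}$, while all remaining terms have scaling factor at most $l^{-1}$; choosing $l$ sufficiently large to absorb these remainders into $\|g\|_{L^1(\mathbb{R}^n)}$ yields \eqref{estimate L1}.

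The passage from $C_c^\infty$ to $L^1$ is then a standard telescoping density argument. Given $g \in L^1(\mathbb{R}^n)$, decompose $g = \sum_{n \ge 1}(g_n - g_{n-1})$ with $g_n \in C_c^\infty(\mathbb{R}^n)$, $g_0 = 0$, and $\|g_n - g_{n-1}\|_{L^1(\mathbb{R}^n)} \le 2^{-n}\|g\|_{L^1(\mathbb{R}^n)}$; apply the $C_c^\infty$ construction to each difference to obtain $G_n$ whose $(m-k)$-th normal trace is $g_n - g_{n-1}$ and which satisfies $\int t^k|\nabla^{m+1}G_n|\,dx\,dt \lesssim 2^{-n}\|g\|_{L^1(\mathbb{R}^n)}$; and set $G := \sum_n G_n$. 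The main obstacle I anticipate lies in this final step: because the scaling parameter $l_n$ used for each $G_n$ depends on the $C^{m+1}$-norm of $g_n - g_{n-1}$, the map $g \mapsto G$ is not linear, and one must directly verify that the pointwise sum (which is locally finite away from $\{t=0\}$ once the $l_n$ are chosen to diverge sufficiently fast) defines a function in $\dot W_k^{m+1,1}(\mathbb{R}_+^{n+1})$ whose prescribed boundary traces are realised as $L^1$-limits of those of the partial sums.
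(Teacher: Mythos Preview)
Your proposal is correct and follows essentially the same approach as the paper: the ansatz $G(x,t)=\tfrac{t^{m-k}}{(m-k)!}g(x)\varphi(lt)$ for $g\in C_c^\infty$, the Leibniz expansion, and the rescaling $r=lt$ that isolates the $\beta=0$ terms as the only $l$-independent contributions are exactly what the paper does. Your treatment of the density step is in fact more explicit than the paper's (which simply writes ``we omit the details''), and you have correctly identified the nonlinearity of $g\mapsto G$ as the point requiring care.
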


\begin{proof}
Assume first that $g\in C_{c}^{\infty}(\mathbb{R}^{n})$ and let $\varphi\in
C_{c}^{\infty}([0,\infty))$ be such that $\varphi(0)=1$, $\varphi^{\prime
}(0)=\cdots=\varphi^{(m-k)}(0)=1$. For $n\in\mathbb{N}$ define $v_{l}%
(x,t):=g(x)\frac{t^{m-k}}{(m-k)!}\varphi(lt)$. By the properties of $\varphi$,
$v_{l}(x,0)=0$, $\frac{\partial^{j}v_{l}}{\partial t^{j}}(x,0)=0$ for
$j=1,\ldots,m-k-1$, and $\frac{\partial^{m-k}v_{l}}{\partial t^{m-k}%
}(x,0)=g(x)$. The product rule implies that the entries of the tensor
$\nabla^{m+1}v_{l}$ can be written as linear combinations of the functions
\[
l^{i} t^{(|\beta|-k-1+i)_{+}}\varphi^{(i)}(lt)\frac{\partial^{\beta}g}{\partial
x^{\beta}}(x)
\]
for multi-indices $\beta\in\mathbb{N}_{0}^{n}$ and $i=0,\ldots,m+1-|\beta|$ if
$\beta\neq0$ or $i=1,\ldots,m+1$ if $\beta=0$. Here, $s_{+}$ is the positive
part of $s$. This, along with the change of variables $r=lt$, leads to the
estimate
\begin{align}
&\int_{\mathbb{R}_{+}^{n+1}}   t^{k}|\nabla^{m+1}v_{l}(x,t)|\,dxdt\nonumber\\
&  \lesssim\sum_{|\beta|=1}^{m+1}\sum_{i=0}^{m+1-|\beta|}\frac{l^{i}%
}{l^{k+(|\beta|-k-1+i)_{+}+1}}\int_{\mathbb{R}^{n}}\left\vert \frac
{\partial^{\beta}g}{\partial x^{\beta}}(x)\right\vert \,dx\int_{0}^{\infty
}r^{k+(|\beta|-k-1+i)_{+}}|\varphi^{(i)}(r)|\,dr\label{m6}\\
&  \;\;+\int_{\mathbb{R}^{n}}|g(x)|\,dx\sum_{i=1}^{m+1}\frac{l^{i}%
}{l^{k+(i-1-k)_{+}+1}}\int_{0}^{\infty}r^{k+(i-1-k)_{+}}|\varphi
^{(i)}(r)|\,dr.\label{m6'}%
\end{align}
If $|\beta|-k-1+i\geq0$, then $\frac{l^{i}}{l^{k+(|\beta|-k-1+i)_{+}+1}}%
=\frac{1}{l^{|\beta|}}\rightarrow0$ since $|\beta|\geq1$, while if
$|\beta|-k-1+i<0$, then $\frac{l^{i}}{l^{k+(|\beta|-k-1+i)_{+}+1}}=\frac
{l^{i}}{l^{k+1}}\rightarrow0$. Hence, the quantity
numbered by equation \eqref{m6} tends to zero for as $l\rightarrow\infty$.
Thus, if $g\neq0$, by taking $l$ large enough, we can majorize the quantity
\eqref{m6} by $\Vert g\Vert_{L^{1}(\mathbb{R}^{n})}$. 

On the other hand, if $i-1-k\geq0$, $\frac{l^{i}}{l^{k+(i-1-k)_{+}+1}}=1$,
while if $i-1-k<0$, then $\frac{l^{i}}{l^{k+(i-1-k)_{+}+1}}\leq1$. Hence, the
quantity numbered by equation \eqref{m6'} is bounded from above by a constant
multiple of $\Vert g\Vert_{L^{1}(\mathbb{R}^{n})}$. If $g=0$, we take $G=0$.
This proves \eqref{estimate L1} in the case $g\in C_{c}^{\infty}%
(\mathbb{R}^{n})$. The general case follows from the density of $C_{c}%
^{\infty}(\mathbb{R}^{n})$ in $L^{1}(\mathbb{R}^{n})$. We omit the details.
\end{proof}

\begin{proof}
[Proof of Theorem \ref{theorem trace higher}] 
\textbf{Step 1: }If
$a\in\mathbb{N}_{0}$ let $l:=m-a-1$, while if $a\notin\mathbb{N}_{0}$ let
$l:=\lfloor m-a\rfloor$. Assume that $f_{j}\in B^{m-a-j,1}(\mathbb{R}^{n})$ for
$j=1,\ldots,l$. We first prove that there exists a function $u_{j}\in\dot
{W}_{a}^{m+1,1}(\mathbb{R}_{+}^{n+1})$ such that $\operatorname*{Tr}%
(\,u_{j})=0$, $\operatorname*{Tr}(\,\frac{\partial^{i}u_{j}}{\partial t^{i}%
})=0$ for all $i=1,\ldots,j-1$, and $\operatorname*{Tr}%
(\,\frac{\partial^{j}u_{j}}{\partial t^{j}})=f_{j}$, with%
\begin{equation}
\Vert\nabla^{m+1}u_{j}\Vert_{L_{a}^{1}(\mathbb{R}_{+}^{n+1}%
)}\lesssim
|f_{j}|_{B^{m-a-j,1}(\mathbb{R}^{n})}\label{m1}%
\end{equation}
(see \eqref{Lp weighted}).
Define%
\[
u_{j}(x,t):=\frac{t^{j}}{j!}(W_{t}\ast f_{j})(x),
\]
where $W$ is the Gaussian function (\ref{gaussian}). The desired properties
$\operatorname*{Tr}(\,\frac{\partial^{j}u_{j}}{\partial t^{j}})=f_{j}$,
$\operatorname*{Tr}(\,u_{j})=0$, $\operatorname*{Tr}(\,\frac{\partial^{i}%
u_{j}}{\partial t^{i}})=0$ for all $i=1,\ldots,j-1$ can be checked by the
properties of $W_{t}$. Concerning the estimate \eqref{m1}, one observes that
the product rule implies that the entries of the tensor $\nabla^{m+1}u_{j}$
are linear combinations of the entries of the tensor $t^{j-i}\nabla
^{m+1-i}W_{t}\ast f_{j}$. Thus the estimate \eqref{m1} is a consequence of
(\ref{main estimate}) applied to the function $u=W_{t}\ast f_{j}$, with $m$
replaced by $m-i$ and $a$ by $a+j-i$, which asserts that one has the
estimates
\[
\int_{\mathbb{R}_{+}^{n+1}}t^{a+j-i}|\nabla^{m+1-i}\left(W_{t}\ast f_{j}%
\right)|\;dtdx\lesssim\left\vert f_{j}\right\vert _{B^{m-a-j,1}(\mathbb{R}^{n})}%
\]
for $i=0,\ldots,j$.

 \textbf{Step 2: } We are now ready to prove the general case. Assume first that
$a=k\in\mathbb{N}_{0}$ let $l:=m-k-1$. We will use the fact that if $u\in
W_{k}^{m+1,1}(\mathbb{R}_{+}^{n+1})$, then $\operatorname*{Tr}(\,u)\in
B^{m-k,1}(\mathbb{R}^{n})$, $\operatorname*{Tr}(\,\frac{\partial^{j}%
u}{\partial t^{j}})\in B^{m-k-j,1}(\mathbb{R}^{n})$ for $j=1,\ldots,l$,
$\operatorname*{Tr}(\,\frac{\partial^{m-k}u}{\partial t^{m-k}})\in
L^{1}(\mathbb{R}^{n})$, with%
\begin{align}
|\operatorname*{Tr}(\,u)|_{B^{m-k,1}(\mathbb{R}^{n})} &  +\sum_{j=1}%
^{l}\left\vert \operatorname*{Tr}\left(  \frac{\partial^{j}u}{\partial t^{j}%
}\right)  \right\vert _{B^{m-k-j,1}(\mathbb{R}^{n})}+\left\Vert
\operatorname*{Tr}\left(  \frac{\partial^{m-k}u}{\partial t^{m-k}}\right)
\right\Vert _{L^{1}(\mathbb{R}_{+}^{n+1})}\label{m7}\\
&  \lesssim\Vert\nabla^{m+1}u_{m}\Vert_{L_{k}^{1}(\mathbb{R}_{+}^{n+1}%
)}\nonumber
\end{align}
(see \cite[Theorem 18.57]{leoni-book-sobolev}). By Theorem \ref{theorem non-homogeneous}, there exists $v_{0}\in
\dot{W}_{k}^{m+1,1}(\mathbb{R}_{+}^{n+1})$ such that $\operatorname*{Tr}%
(\,v_{0})=f_{0}$ and
\begin{equation}
\Vert\nabla^{m+1}v_{0}\Vert_{L_{k}^{1}(\mathbb{R}_{+}^{n+1})}\lesssim
|f_{0}|_{B^{m-k,1}(\mathbb{R}^{n})}.\label{m8}%
\end{equation}
In turn, (\ref{m7}) holds for $v_{0}$. Hence, we can apply Step 1, with
$f_{1}$ replaced by $f_{1}-\operatorname*{Tr}\left(  \frac{\partial v_{0}%
}{\partial t}\right)  $, to find a function $v_{1}\in\dot{W}^{m+1,1}%
(\mathbb{R}_{+}^{n+1})$ such that $\operatorname*{Tr}(\,v_{1})=0$, $\operatorname*{Tr}(\,\frac{\partial v_{1}%
}{\partial t})=f_{1}-\operatorname*{Tr}\left(  \frac{\partial v_{0}}{\partial
t}\right)  $, and
\begin{align*}
\Vert\nabla^{m+1}v_{1}\Vert_{L_{k}^{1}(\mathbb{R}_{+}^{n+1})}  & \lesssim
|f_{1}|_{B^{m-1,1}(\mathbb{R}^{n})}+\left\vert \operatorname*{Tr}\left(
\frac{\partial v_{0}}{\partial t}\right)  \right\vert _{B^{m-1,1}%
(\mathbb{R}^{n})}\\
& \lesssim|f_{1}|_{B^{m-1,1}(\mathbb{R}^{n})}+|f_{0}|_{B^{m,1}(\mathbb{R}^{n}%
)},
\end{align*}
where the last inequality follows from (\ref{m7}), with $v_{0}$ in place of $u$,
and (\ref{m8}).

Inductively, assume that $v_{j}\in\dot{W}_{k}^{m+1,1}(\mathbb{R}_{+}^{n+1})$
has been constructed with $\operatorname*{Tr}(\,v_{j})=0$, $\operatorname*{Tr}%
(\,\frac{\partial^{i}v_{j}}{\partial t^{i}})=0$ for all $i=1,\ldots,j-1$,%
\[
\operatorname*{Tr}\left(\,\frac{\partial^{j}v_{j}}{\partial t^{j}}\right)=f_{j}%
-\sum_{i=0}^{j-1}\operatorname*{Tr}\left(  \frac{\partial^{j}v_{i}}{\partial
t^{j}}\right)  ,
\]
and
\[
\Vert\nabla^{m+1}v_{j}\Vert_{L_{k}^{1}(\mathbb{R}_{+}^{n+1})}\lesssim\sum
_{i=0}^{j}|f_{i}|_{B^{m-k-i,1}(\mathbb{R}^{n})}.
\]
By (\ref{m7}), we can apply Step 1, with $f_{j+1}$ replaced by $f_{j+1}%
-\sum_{i=0}^{j}\operatorname*{Tr}\left(  \frac{\partial^{j+1}v_{i}}{\partial
t^{j+1}}\right)  $, to find a function $v_{j+1}\in\dot{W}_{k}^{m+1,1}%
(\mathbb{R}_{+}^{n+1})$ such that $\operatorname*{Tr}(\,v_{j+1})=0$,
$\operatorname*{Tr}(\,\frac{\partial^{i}v_{j+1}}{\partial t^{i}})=0$ for all
$i=1,\ldots,j$, and $\operatorname*{Tr}(\,\frac{\partial^{j+1}v_{j}}{\partial
t^{j+1}})=f_{j+1}-\sum_{i=0}^{j}\operatorname*{Tr}\left(  \frac{\partial
^{j+1}v_{i}}{\partial t^{j+1}}\right)$, with
\begin{align*}
\Vert\nabla^{m+1}v_{j+1}\Vert_{L_{k}^{1}(\mathbb{R}_{+}^{n+1})}  &
\lesssim|f_{j+1}|_{B^{m-k-j-1,1}(\mathbb{R}^{n})}+\sum_{i=0}^{j}\left\vert
\operatorname*{Tr}\left(  \frac{\partial^{j+1}v_{i}}{\partial t^{j+1}}\right)
\right\vert _{B^{m-k-i,1}(\mathbb{R}^{n})}\\
& \lesssim\sum_{i=0}^{j+1}|f_{i}|_{B^{m-k-i,1}(\mathbb{R}^{n})}.
\end{align*}
This induction process gives functions $v_{0}$, \ldots, $v_{l}\in\dot{W}%
_{k}^{m+1,1}(\mathbb{R}_{+}^{n+1})$. Again by (\ref{m7}), we can apply Lemma \ref{lemma normal L1} construct a function $v_{m-k}\in\dot{W}_{k}^{m+1,1}(\mathbb{R}_{+}^{n+1})$
such that $\operatorname*{Tr}(\,v_{m})=0$, $\operatorname*{Tr}(\,\frac
{\partial^{i}v_{m-k}}{\partial t^{i}})=0$ for all $i=1,\ldots,l$,
$\operatorname*{Tr}(\,\frac{\partial^{m-k}v_{m}-k}{\partial t^{m-k}}%
)=f_{m-k}-\sum_{i=0}^{l}\operatorname*{Tr}\left(  \frac{\partial^{m-k}v_{i}%
}{\partial t^{m-k}}\right)  $ with
\begin{align*}
\Vert\nabla^{m+1}v_{m-k}\Vert_{L_{k}^{1}(\mathbb{R}_{+}^{n+1})}  &
\lesssim\Vert f_{m-k}\Vert_{L^{1}(\mathbb{R}^{n})}+\sum_{i=0}^{l}\left\Vert
\operatorname*{Tr}\left(  \frac{\partial^{m-k}v_{i}}{\partial t^{m-k}}\right)
\right\Vert _{L^{1}(\mathbb{R}^{n})}\\
& \lesssim\Vert f_{m-k}\Vert_{L^{1}(\mathbb{R}^{n})}+\sum_{i=0}^{l}%
|f_{i}|_{B^{m-k-i,1}(\mathbb{R}^{n})}.
\end{align*}
We now define $u=v_{0}+\cdots+v_{m}\in\dot{W}_{k}^{m+1,1}(\mathbb{R}_{+}%
^{n+1})$. By construction $\operatorname*{Tr}(\,u)=f_{0}$, $\operatorname*{Tr}%
(\,\frac{\partial^{j}u}{\partial t^{j}})=f_{j}$ for all $j=1,\ldots,m$, and
\[
\Vert\nabla^{m+1}u\Vert_{L_{k}^{1}(\mathbb{R}_{+}^{n+1})}\lesssim\sum_{j=0}%
^{l}|f_{j}|_{B^{m-a-j,1}(\mathbb{R}^{n})}+\Vert f_{m-k}\Vert_{L^{1}%
(\mathbb{R}^{n})}.
\]
To obtain a function in $W^{m+1}_k(\mathbb{R}_{+}^{n+1})$ we proceed as in
Theorem \ref{theorem non-homogeneous}.

The case $a\notin\mathbb{N}_{0}$ is similar but simpler. We omit the details.
\end{proof}

\begin{remark}
Note that when $a=0$ we construct a function $u\in\dot{W}^{m+1,1}%
(\mathbb{R}_{+}^{n+1})$ such that $\operatorname*{Tr}(\,u)=f_{0}$,
$\operatorname*{Tr}(\,\frac{\partial^{j}u}{\partial t^{j}})=f_{j}$ for
$j=1,\ldots,m$, and%
\[
\Vert\nabla^{m+1}u\Vert_{L^{1}(\mathbb{R}_{+}^{n+1})}\lesssim\sum_{j=0}%
^{m-1}|f_{j}|_{B^{m-j,1}(\mathbb{R}^{n})}+\Vert f_{m}\Vert_{L^{1}%
(\mathbb{R}^{n})}.
\]
This estimate was used by Gmeineder, Raita, and Van Schaftingen
\cite{gmeineder-raita-van-schaftingen2022}.\end{remark}

Next, we prove Theorem \ref{theorem a>m}. The proof follows the approach of Grisvard \cite{grisvard1963}, who considered the case $p>1$, $m=0$ and $a\ge p-1$.

\begin{proof}
[Proof of Theorem \ref{theorem a>m}] Let $a>m$ and let $u\in W_{a}^{m+1,1}(\mathbb{R}_{+}^{n+1})$.

\textbf{Step 1:} Assume first that $u\in C^{\infty}(\mathbb{R}_{+}^{n+1})$ with $u=0$ outside
$B_{n}(0,r)\times(0,r)$ for some large $r>0$. Consider a function $\varphi\in
C^{\infty}([0,\infty))$ such that $0\leq\varphi\leq1$, $\varphi=0$ in
$[0,1]$ and $\varphi=1$ in $[2,\infty)$. For $n\in\mathbb{N}$ define
$v_{j}(x,t):=u(x,t)\varphi(jt)$. Given a
multi-index $\alpha=(\beta,l)\in\mathbb{N}_{0}^{n}\times\mathbb{N}_{0}$, with
$|\beta|+l=|\alpha|=m+1$, the product rule implies that
\[
\partial^{\alpha}v_{j}(x,t)=\sum_{i=0}^{l}\binom{l}{i}j^{i}\varphi
^{(i)}(jt)\frac{\partial^{l-i}\partial^{\beta}u}{\partial t^{l-i}\partial
x^{\beta}}(x,t).
\]
If $i=0$, we can use the Lebesgue dominated convergence theorem to show that
\[
\varphi(jt)\partial^{\alpha}u(x,t)\rightarrow\partial^{\alpha}u(x,t)\text{ in
}L_{a}^{1}(\mathbb{R}_{+}^{n+1}).
\]
On the other hand, if $i\geq1$, then by the change of variables $r=jt$, we
have the estimate
\begin{align*}
j^{i}\int_{\mathbb{R}_{+}^{n+1}}t^{a}\left\vert \varphi^{(i)}(jt)\frac
{\partial^{l-i}\partial^{\beta}u}{\partial t^{l-i}\partial x^{\beta}%
}(x,t)\right\vert \,dxdt  & \lesssim\left\Vert \nabla^{l-i+|\beta|}u\right\Vert
_{\infty}\mathcal{L}^{n}(B_n(0,r))j^{i}\int_{1/j}^{2/j}t^{a}dt\\
& \lesssim\frac{1}{j^{a+1-i}}\rightarrow0
\end{align*}
since $a>m$ and $1\leq i\leq m+1$. 

\textbf{Step 2:}  The general case $u\in W_{a}^{m+1,1}%
(\mathbb{R}_{+}^{n+1})$ can be obtained by a density argument. By reflecting  (see, e.g., \cite[Exercise 13.3]{leoni-book-sobolev}) and mollifying $u$, we can assume that 
$u\in W_{a}^{m+1,1}
(\mathbb{R}_{+}^{n+1})\cap C^{\infty}
(\mathbb{R}^{n+1})$. Consider a cut-off function $
\phi\in C^{\infty}_{c}(\mathbb{R}^{n+1})$ such that $\phi=1$ in $B(0,1)$ and $\phi=0$ outside $B(0,2)$. The function $u_j$, given by  $u_j(x,t):=\phi (j^{-1}(x,t))u(x,t)$, satisfies the hypotheses of Step 1 and converges to $u$ in $W^{1,1}_{a}(\mathbb{R}_{+}^{n+1})$ as $j\to\infty$. We omit the details  (see \cite[Lemma 1.2]{grisvard1963} for the case $m=1$).

\end{proof}

\section{Harmonic Extension}\label{section poisson}
The initial goal of this paper was to give a straightforward proof of the estimate for the missing cross terms in \cite{mazya-book2011}, where the following idea emerged.  Following Uspenski\u{\i}, we introduce the harmonic extension of a function $f\in B^{1,1}%
(\mathbb{R}^{n})$:
\begin{equation}
u(x,t):=(P_{t}\ast f)(x)=\int_{\mathbb{R}^{n}}P_{t}%
(x-y)f(y)\,dy,\label{function F harmonic}%
\end{equation}
where $P_{t}$ is the Poisson kernel (cf \cite[p.~61]{stein-book1970})%
\begin{equation}
P(x):=\frac{c_{n}}{(|x|^{2}+1)^{(n+1)/2}},\quad P_{t}(x):=\frac{1}{t^{n}%
}P(xt^{-1})=\frac{c_{n}t}{(|x|^{2}+t^{2})^{(n+1)/2}}\label{kernel harmonic}%
\end{equation}
and%
\begin{equation}
c_{n}=\frac{1}{\int_{\mathbb{R}^{n}}\frac{1}{(|x|^{2}+1)^{(n+1)/2}}dx}%
=\Gamma((n+1)/2)/\pi^{(n+1)/2},\label{constant c_n}%
\end{equation}
where $\Gamma$ is the Gamma function.  

As mentioned in the introduction, Uspenski\u{\i}'s argument \cite{uspenskii1970} on p.~137-138 shows that for $i,j=1,\ldots, n$ one has %
\begin{align}
\frac{\partial^{2}u}{\partial x_{i}\partial x_{j}}(x,t)=\frac{1}{2}%
t^{-n-2}\int_{\mathbb{R}^{n}}\frac{\partial^{2}P_1}{\partial x_{i}\partial
x_{j}}(ht^{-1})[f(x+h)+f(x-h)-2f(x)]\,dh,\label{usp}
\end{align}
which relies on the fact that $\frac{\partial
^{2}P_1}{\partial x_{i}\partial x_j}$ is even and has mean zero.  This is sufficient to estimate the pure second order derivatives in the trace variable.  Meanwhile, harmonicity allows one to reduce the pure second order derivatives in the normal variable to this case, as 
\begin{align*}
\frac{\partial
^{2}u}{\partial t^2}(x,t) = -\sum_{i=1}^n\frac{\partial
^{2}u}{\partial x_i^2}(x,t).
\end{align*}
We then observed that for the mixed case one can simply use the identities 
\begin{equation}
R_{i}\left(  \frac{\partial^{2}P_{t}}{\partial t\partial x_{j}}\right)
=\frac{\partial^{2}P_{t}}{\partial x_{i}\partial x_{j}}, \quad P_{t}\ast f=\sum_{i=1}^{n}R_{i}(P_{t})\ast R_{i}(f),\label{201}%
\end{equation}
where $R_i$ is the Riesz transform, to write 
\begin{align*}
\frac{\partial^{2}u}{\partial t\partial x_{j}}&(x,t)\\&=\frac{1}{2t^2}%
\sum_{i=1}^n\int_{\mathbb{R}^{n}}\frac{\partial^{2}P_t}{\partial x_{i}\partial
x_{j}}(h)[R_i(f)(x+h)+R_i(f)(x-h)-2R_i(f)(x)]\,dh.
\end{align*}
The estimate for the pure second order derivatives can then be applied, using the fact that
for every $f\in B^{1,1}(\mathbb{R}^{n})$,
\begin{equation}
|R_{j}(f)|_{B^{1,1}(\mathbb{R}^{n})}\lesssim|f|_{B^{1,1}(\mathbb{R}^{n})}.
\end{equation}
This estimate is well-known and its classical proof makes use of the
Littlewood--Paley theory (see, e.g., \cite{sawano2020} or \cite[Section
5.2.2]{triebel-book2010}).  We refer to \cite{LeoniSpector2024} for a 
different proof that relies on the intrinsic seminorm of $\dot{B}%
^{1,1}(\mathbb{R}^{n})$ and is based on an argument of Devore,
Riemenschneider, Sharpley \cite{devore-riemenschneider-sharpley1979}.

This argument yields a third proof of the following theorem\footnote{\cite[Theorem 3 on p.~ 135]{uspenskii1970} is accomplished via the harmonic extension, \cite[Theorem 1.9 on p.~356]%
{mironescu-russ-2015} is accomplished via Littlewood--Paley theory, while Burenkov \cite[Theorem 3 in Section 5.4]{burenkov-book1998} gives a different
proof of \eqref{trace space} that covers the case $p=1$ with $a=0$. However, a
crucial point in his proof is the ability to factor the derivative of a
mollifier as a linear combination of another integrable function, which in the
context of the Poisson kernel in the second order case essentially amounts to
showing the existence of an integrable function $\nu$ such that
\[
\frac{\partial^{2}P_{1}}{\partial t \partial x_{j}}(\xi)=2\nu(\xi)-\frac
{1}{2^{n}}\nu(\xi/2).
\]
This is a step we have not been able to verify in the demonstration of
Corollary 7 in Section 5.4 of \cite[Theorem 3 in Section 5.4]%
{burenkov-book1998}.}.
\begin{theorem}
\label{theorem trace}Let $f\in B^{1,1}(\mathbb{R}^{n})$ and let $u$ be defined as in \eqref{function F harmonic}. Then 
\begin{equation}
\Vert \nabla^2 u\Vert_{L^{1}(\mathbb{R}_{+}^{n+1})}\lesssim|f|_{B^{1,1}%
(\mathbb{R}^{n})}. \label{trace estimate}%
\end{equation}

\end{theorem}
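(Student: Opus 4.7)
The strategy is to split the entries of $\nabla^2 u$ into three families—pure spatial derivatives $\partial^2 u/(\partial x_i\partial x_j)$, the pure normal derivative $\partial^2 u/\partial t^2$, and mixed derivatives $\partial^2 u/(\partial t\partial x_j)$—and handle them separately, reducing the last two to the first via harmonicity and the Riesz transform identities \eqref{201}. The guiding principle mirrors the proof of Theorem \ref{theorem trace B11}, but with the Gaussian $W_t$ replaced by the Poisson kernel $P_t$.

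For the pure spatial derivatives, I would use Uspenski\u{\i}'s ansatz \eqref{usp}, which is legitimate because $\partial^2 P_1/(\partial x_i \partial x_j)$ is even with mean zero. The required analog of Lemma \ref{lemma heat integral} is
\[
\int_0^{\infty}\left|\frac{\partial^2 P_t}{\partial x_i\partial x_j}(h)\right|dt \lesssim \frac{1}{|h|^{n+1}},
\]
which follows from the scaling $\partial^2 P_t/(\partial x_i \partial x_j)(h) = t^{-n-2}\,\partial^2 P/(\partial x_i \partial x_j)(h/t)$ together with the substitution $r = |h|/t$, using that $P$ and its derivatives decay like $|y|^{-(n+1)}$ at infinity and are smooth near the origin. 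Taking absolute values in \eqref{usp}, applying Fubini, and using this bound gives
\[
\int_{\mathbb{R}_+^{n+1}}\left|\frac{\partial^2 u}{\partial x_i \partial x_j}(x,t)\right|dxdt \lesssim \int_{\mathbb{R}^n}\int_{\mathbb{R}^n}\frac{|f(x+h)+f(x-h)-2f(x)|}{|h|^{n+1}}\,dh\,dx \lesssim |f|_{B^{1,1}(\mathbb{R}^n)}.
\]

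For $\partial^2 u/\partial t^2$, I would invoke harmonicity: since $u$ is the Poisson extension, $\Delta_{x,t}u=0$, so $\partial^2 u/\partial t^2 = -\sum_{i=1}^n \partial^2 u/\partial x_i^2$, and the previous estimate applies. For the mixed derivatives, I would use both identities in \eqref{201}: applying the second identity to $P_t*f$ and then the first to commute a Riesz transform past the $t$-derivative yields
\[
\frac{\partial^2 u}{\partial t\partial x_j}(x,t) = \sum_{i=1}^n\frac{\partial^2 P_t}{\partial x_i\partial x_j}\ast R_i(f)(x),
\]
which reduces the estimate to the pure spatial case applied to $R_i(f)$ in place of $f$.

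The main obstacle is then the $B^{1,1}$-boundedness of each Riesz transform $R_i$, which is needed to control $|R_i(f)|_{B^{1,1}(\mathbb{R}^n)}$ by $|f|_{B^{1,1}(\mathbb{R}^n)}$. This is the non-trivial input, and I would import it as the cited consequence of Littlewood--Paley theory (see \cite{sawano2020}, \cite[Section 5.2.2]{triebel-book2010}) or, alternatively, via the argument in \cite{LeoniSpector2024}. Combining the three cases yields \eqref{trace estimate}.
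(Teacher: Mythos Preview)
Your proposal is correct and coincides with the paper's \emph{second} proof of the theorem: the pure spatial derivatives are handled via Uspenski\u{\i}'s ansatz \eqref{usp} and the Poisson analog of Lemma \ref{lemma heat integral}, the pure normal derivative via harmonicity, and the mixed derivatives via the Riesz transform identities \eqref{201} together with the $B^{1,1}$-boundedness of the Riesz transforms (Theorem \ref{theorem riesz}).

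It is worth noting that the paper also gives a \emph{first} proof that differs from yours precisely in the treatment of the mixed derivatives. Instead of invoking Riesz transforms and their boundedness on $B^{1,1}(\mathbb{R}^n)$, the first proof applies Taibleson's theorem \cite[Theorem 1 on p.~420]{taibleson1964} to the harmonic function $\partial^2 u/(\partial x_i\partial t)$ to obtain directly
\[
\int_{\mathbb{R}_+^{n+1}}\left|\frac{\partial^2 u}{\partial x_i\partial t}\right|dxdt \lesssim \sum_{j=1}^n\int_{\mathbb{R}_+^{n+1}}\left|\frac{\partial^2 u}{\partial x_i\partial x_j}\right|dxdt,
\]
thus reducing to the pure spatial case without ever appealing to the $B^{1,1}$-boundedness of singular integrals. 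The trade-off is that your route (the paper's second proof) is more elementary in its manipulations but imports a nontrivial external fact, whereas the Taibleson route keeps the argument self-contained within harmonic function theory. The paper in fact observes that the two approaches are essentially dual: one can conversely deduce the Riesz transform bound on $B^{1,1}$ from the harmonic extension characterization (Theorem \ref{theorem riesz}).
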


After this paper was completed, Mironescu directed us to yet another approach to deal with the cross derivatives, which works under the additional assumption that $u(x,t)\to 0$ as $t\to\infty$ and relies on Taibleson's \cite[Theorem 1 on p.~420]{taibleson1964} (see also \cite[Lemma 4.1 and formula (5.8)]{mironescu-russ-2015}) to estimate the cross term via the pure trace derivatives: 
\begin{align*}
\int_0^\infty \int_{\mathbb{R}^n}  \left|\frac{\partial
^{2}u}{\partial x_{i}\partial t}(x,t) \right|\;dxdt \lesssim    \max_{j=1,\ldots,n} \int_0^\infty \int_{\mathbb{R}^n} \left|\frac{\partial
^{2}u}{\partial x_{i}\partial x_j}(x,y) \right|\;dxdt
\end{align*}
Actually, this is just a concise presentation of the original argument of Uspenski\u{\i}: a combination of Hardy's inequality (\cite[Theorem 1]{uspenskii1970} in his paper or \cite[equation (2.3) in Proposition 2.1 on p.~358]{mironescu-russ-2015} in Mironescu and Russ's), the semi-group property of the Poisson kernel that allows one to express the lifting as a double convolution, and easy estimates of derivatives for the Poisson kernel.  

The following is a more rigorous reiteration of the preceding discussion.  To this end we recall some basic properties of the Riesz transform. Given $j\in\{1,\ldots,n\}$ and a
locally integrable function $f:\mathbb{R}^{n}\rightarrow\mathbb{R}$, the
\emph{Riesz transform} of $f$ is defined formally as%
\begin{equation}
R_{j}(f)(x)=c_{n}\lim_{\varepsilon\rightarrow0^{+}}\int_{\mathbb{R}%
^{n}\setminus B(0,\varepsilon)}f(x-y)\frac{y_{j}}{|y|^{n+1}}%
\,dy,\label{riesz transform}%
\end{equation}
provided the limit exists. The constant $c_{n}$ here is the same as in \eqref{constant c_n}. 
\bigskip

\bigskip

\begin{proposition}
\label{proposition riesz even}Let $P_{t}$ be the Poisson kernel
\eqref{kernel harmonic}. Then%
\[
R_{j}\left(  \frac{\partial P_{t}}{\partial t}\right)  =\frac{\partial P_{t}%
}{\partial x_{j}}.
\]

\end{proposition}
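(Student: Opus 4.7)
The identity is cleanest on the Fourier side, since both the Poisson kernel and the Riesz transform have explicit multipliers there. Adopting the convention $\widehat{f}(\xi)=\int f(x)e^{-2\pi i x\cdot\xi}\,dx$ consistent with the paper's footnote on the Gaussian, one has the classical formula $\widehat{P_t}(\xi)=e^{-2\pi t|\xi|}$. Since $P_t$ and $\partial P_t/\partial t$ both lie in $L^1(\mathbb{R}^n)$ for every $t>0$ (for the latter, the pointwise bound $|\partial_t P_t(x)|\lesssim (|x|^2+t^2)^{-(n+1)/2}$ suffices), differentiating under the integral sign yields
\[
\widehat{\tfrac{\partial P_t}{\partial t}}(\xi)=-2\pi|\xi|e^{-2\pi t|\xi|},\qquad \widehat{\tfrac{\partial P_t}{\partial x_j}}(\xi)=2\pi i\xi_j e^{-2\pi t|\xi|}.
\]

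Next I would invoke the fact that the constant $c_n$ appearing in \eqref{constant c_n} is precisely the constant for which the singular integral \eqref{riesz transform} has Fourier multiplier $-i\xi_j/|\xi|$ (see, e.g., \cite[Chapter III]{stein-book1970}; here the coincidence of the normalizing constants is a direct calculation of the symbol of $\mathrm{p.v.}\,y_j/|y|^{n+1}$). Applying $R_j$ to $\partial P_t/\partial t$ and passing to Fourier transforms gives
\[
\widehat{R_j\bigl(\tfrac{\partial P_t}{\partial t}\bigr)}(\xi)=\Bigl(-i\tfrac{\xi_j}{|\xi|}\Bigr)\bigl(-2\pi|\xi|e^{-2\pi t|\xi|}\bigr)=2\pi i\xi_j e^{-2\pi t|\xi|}=\widehat{\tfrac{\partial P_t}{\partial x_j}}(\xi).
\]
Since $R_j(\partial P_t/\partial t)$ and $\partial P_t/\partial x_j$ are both continuous and integrable functions in $x$ for each fixed $t>0$, Fourier inversion yields the claimed pointwise identity.

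The step I expect to require the most care is justifying that the principal-value integral in \eqref{riesz transform}, applied to $g=\partial P_t/\partial t$, converges and agrees with the Fourier-multiplier realization of $R_j$. This is routine once written out: $g$ is smooth, so the cancellation $\int_{\varepsilon<|y|<1}[g(x-y)-g(x)]\,y_j|y|^{-n-1}\,dy$ converges as $\varepsilon\to 0^+$ by the mean value theorem, and the tail at infinity is absolutely integrable because $|g(x-y)|\lesssim (1+|y|)^{-(n+1)}$ uniformly on compact sets in $x$; a truncation argument combined with the Plancherel identity on the $L^2$-valid multiplier then identifies the singular integral with the multiplier operator. An alternative, entirely Fourier-free approach would be to differentiate the semi-group identity $P_t\ast f=\sum_i R_i(P_t)\ast R_i(f)$ in $t$ using Proposition \ref{proposition riesz even}, but the calculation above is more transparent.
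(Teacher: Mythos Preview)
Your proof is correct and follows essentially the same route as the paper's: compute the Fourier transform of $P_t$, differentiate in $t$ and in $x_j$, apply the Riesz multiplier, and invert. The only differences are cosmetic---you use the multiplier $-i\xi_j/|\xi|$ while the paper writes $i\xi_j/|\xi|$ (a sign convention), and you supply additional justification (integrability of $\partial_t P_t$, agreement of the principal-value and multiplier realizations of $R_j$) that the paper omits.
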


\begin{proof}
Taking the Fourier transform in the $x$ variables gives (see \cite[p.~125]%
{stein-book1970})%
\[
\left(  R_{j}\left(  \frac{\partial P_{t}}{\partial t}\right)  \right)
^{\wedge}(\xi)=i\frac{\xi_{j}}{|\xi|}\left(  \frac{\partial P_{t}}{\partial
t}\right)  ^{\wedge}(\xi)=i\frac{\xi_{j}}{|\xi|}\frac{\partial\hat{P_{t}}%
}{\partial t}(\xi).
\]
As $\hat{P}_{t}(\xi)=e^{-2\pi|\xi|t}$, we have
\[
\frac{\partial\hat{P_{t}}}{\partial t}(\xi)=-2\pi|\xi|e^{-2\pi|\xi|t},
\]
and therefore
\[
\left(  R_{j}\left(  \frac{\partial P_{t}}{\partial t}\right)  \right)
^{\wedge}(\xi)=i\frac{\xi_{j}}{|\xi|}\frac{\partial\hat{P_{t}}}{\partial
t}(\xi)=-2\pi i\xi_{j}e^{-2\pi|\xi|t}=\left(  \frac{\partial P_{t}}{\partial
x_{j}}\right)  ^{\wedge}(\xi).
\]
The claim follows by inverting the Fourier transform.
\end{proof}

\bigskip

\begin{proposition}
\label{proposition product riesz}Let $f\in L^{p}(\mathbb{R}^{n})$ and $g\in
L^{p^{\prime}}(\mathbb{R}^{n})$, where $1<p<\infty$. Then%
\[
\int_{\mathbb{R}^{n}}fg\,dx=\sum_{j=1}^{n}\int_{\mathbb{R}^{n}}R_{j}%
(f)R_{j}(g)\,dx.
\]

\end{proposition}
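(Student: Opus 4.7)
My plan is to reduce the identity to Schwartz functions via density and the $L^p$-boundedness of the Riesz transform, and to handle the Schwartz case by Fourier analysis. The key algebraic fact driving the identity is the pointwise relation $\sum_{j=1}^{n}\xi_{j}^{2}/|\xi|^{2}=1$ for $\xi\neq 0$, which encodes the Pythagorean-like decomposition $\sum_{j}R_{j}^{2}=-I$ induced by the symbols of the $R_{j}$'s.

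First, I would verify the identity for $f,g\in\mathcal{S}(\mathbb{R}^{n})$. Using the Fourier multiplier representation $\widehat{R_{j}f}(\xi)=i(\xi_{j}/|\xi|)\hat{f}(\xi)$ (as in \cite[p.~125]{stein-book1970}) together with Parseval's identity for real-valued Schwartz functions, I compute
\[
\int_{\mathbb{R}^{n}}R_{j}(f)R_{j}(g)\,dx=\int_{\mathbb{R}^{n}}\frac{i\xi_{j}}{|\xi|}\hat{f}(\xi)\,\overline{\frac{i\xi_{j}}{|\xi|}\hat{g}(\xi)}\,d\xi=\int_{\mathbb{R}^{n}}\frac{\xi_{j}^{2}}{|\xi|^{2}}\hat{f}(\xi)\,\overline{\hat{g}(\xi)}\,d\xi.
\]
Summing over $j=1,\ldots,n$ and using $\sum_{j}\xi_{j}^{2}/|\xi|^{2}=1$ a.e.\ in $\xi$ gives $\sum_{j}\int R_{j}(f)R_{j}(g)\,dx=\int\hat{f}\overline{\hat{g}}\,d\xi=\int fg\,dx$ by Parseval again. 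This settles the Schwartz case.

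For the general case, I would approximate: choose sequences $(f_{k})\subset\mathcal{S}(\mathbb{R}^{n})$ with $f_{k}\to f$ in $L^{p}(\mathbb{R}^{n})$ and $(g_{k})\subset\mathcal{S}(\mathbb{R}^{n})$ with $g_{k}\to g$ in $L^{p^{\prime}}(\mathbb{R}^{n})$, which is possible since $1<p,p^{\prime}<\infty$. The classical Calder\'on--Zygmund theorem gives that $R_{j}$ is bounded on $L^{p}(\mathbb{R}^{n})$ and on $L^{p^{\prime}}(\mathbb{R}^{n})$, so $R_{j}(f_{k})\to R_{j}(f)$ in $L^{p}$ and $R_{j}(g_{k})\to R_{j}(g)$ in $L^{p^{\prime}}$. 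By H\"{o}lder's inequality both sides of the identity applied to $(f_{k},g_{k})$ converge to the corresponding quantities for $(f,g)$, completing the proof.

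The main subtlety is simply making sure one has $R_j$ bounded on $L^p$ for $1<p<\infty$ (so both factors remain in dual Lebesgue spaces under approximation); beyond invoking this standard fact, the argument is routine, and I do not expect any genuine obstacle.
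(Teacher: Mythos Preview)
Your proof is correct and follows essentially the same approach as the paper: verify the identity for Schwartz functions via Parseval and the symbol relation $\sum_{j}\xi_{j}^{2}/|\xi|^{2}=1$, then pass to general $f\in L^{p}$, $g\in L^{p'}$ by density and the $L^{q}$-boundedness of the Riesz transforms. The only cosmetic difference is that the paper writes the chain of equalities starting from $\int fg\,dx$ rather than from the right-hand side.
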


\begin{proof}
Assume first that $f,g\in\mathcal{S}(\mathbb{R}^{n})$. By making use of
Parseval's identity and the facts that $\mathcal{F}(R_{j}(f))(\xi)=i\frac
{\xi_{j}}{|\xi|}\mathcal{F}(f)(\xi)$ and $\mathcal{F}(f)\in L^{2}%
(\mathbb{R}^{n})$, we have%
\begin{align*}
\int_{\mathbb{R}^{n}}fg\,dx  &  =\int_{\mathbb{R}^{n}}\mathcal{F}%
(f)\overline{\mathcal{F}(g)}\,d\xi=\sum_{j=1}^{n}\int_{\mathbb{R}^{n}}\left(
i\frac{\xi_{j}}{|\xi|}\right)  \mathcal{F}(f) \overline{\left(  i\frac{\xi
_{j}}{|\xi|}\right)  \mathcal{F}(g)}\,d\xi\\
&  =\sum_{j=1}^{n}\int_{\mathbb{R}^{n}}\mathcal{F}(R_{j}(f)) \overline
{\mathcal{F}(R_{j}(g))}\,d\xi=\sum_{j=1}^{n}\int_{\mathbb{R}^{n}}R_{j}%
(f)R_{j}(g)\,dx.
\end{align*}
If $f\in L^{p}(\mathbb{R}^{n})$ and $g\in L^{p^{\prime}}(\mathbb{R}^{n})$, we
use a density argument and the fact that the Riesz transform is bounded in
$L^{q}(\mathbb{R}^{n})$ for all $1<q<\infty$.
\end{proof}

\begin{proof}[First proof of Theorem \ref{theorem trace}] Assume that $f\in C^{\infty
}_c(\mathbb{R}^{n})$ and let $u=P_t*f$, where  $P_{t}$ is the Poisson kernel (\ref{kernel harmonic}).
Since $\left\vert \frac{\partial^{2}P_{1}}{\partial
x_{i}\partial x_{j}}(x)\right\vert \lesssim\frac{1}{|x|^{n+3}}$ for $|x|\geq1$, the estimate \eqref{102}
 holds with $W_t$ replaced by $P_t$. Hence, in view of Remark \ref{remark pure derivatives},  
we can estimate the $L^1$ norm of $\frac{\partial^2u}{\partial x_i\partial x_j}$ as in Step 1 of the proof of Theorem \ref{theorem trace B11}. 
Since
\[
\frac{\partial^{2}P_{t}}{\partial t^{2}}+\sum_{i=1}^{n}\frac{\partial^{2}%
P_{t}}{\partial x_{i}^{2}}=0,
\]
using (\ref{100}), we can write
\[
\frac{\partial^{2}u}{\partial t^{2}}(x,t)=-\sum_{i=1}^{n}\frac{\partial^{2}%
u}{\partial x_{i}^{2}}(x,t)=-\frac{1}{2}\sum_{i=1}^{n}\int_{\mathbb{R}^{n}%
}\frac{\partial^{2}P_{t}}{\partial x_{i}^{2}}(h)[f(x+h)+f(x-h)-2f(x)]\,dh
\]
and in the same way, argue the estimate for this derivative.

\textbf{Step 2: } To estimate the $L^1$ norm of $\frac{\partial^{2}%
u}{\partial t\partial x_{j}}$, we use Taibleson's \cite[Theorem 1 on p.~420]{taibleson1964} applied to the harmonic function $\frac{\partial
^{2}u}{\partial x_{i}\partial t}$:
\begin{align*}
\int_0^\infty \int_{\mathbb{R}^n}  \left|\frac{\partial
^{2}u}{\partial x_{i}\partial t}(x,t) \right|\;dxdt \lesssim   \sum_{j=1}^n \int_0^\infty \int_{\mathbb{R}^n} \left|\frac{\partial
^{2}u}{\partial x_{i}\partial x_j}(x,y) \right|\;dxdt,
\end{align*}
which reduces the argument again the previous case.

\textbf{Step 3: }
A standard density argument in $B^{1,1}(\mathbb{R}^{n})$ allows one to remove the
additional hypothesis that $f\in C^{\infty}_c(\mathbb{R}^{n})$. We omit the details.
\end{proof}

Interestingly, while the boundedness of the Riesz transforms gives a simple proof of the inclusion \eqref{inclusion hard}, i.e. the lifting estimate, the trace characterization of $W^{2,1}(\mathbb{R}^{n+1}_+)$ via harmonic extension itself yields a simple proof that Riesz transforms are bounded on the Besov space $B^{1,1}(\mathbb{R}^{n})$.  In particular, we next establish
\begin{theorem}
\label{theorem riesz}For every $f\in B^{1,1}(\mathbb{R}^{n})$,
\[
|R_{j}(f)|_{B^{1,1}(\mathbb{R}^{n})}\lesssim|f|_{B^{1,1}(\mathbb{R}^{n})}.
\]
\end{theorem}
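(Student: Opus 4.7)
The plan is to combine the identity $\partial_t(P_t*R_jf)=\partial_{x_j}(P_t*f)$—a direct consequence of Proposition~\ref{proposition riesz even}—with the classical characterization of $B^{1,1}(\mathbb{R}^n)$ in terms of the second time-derivative of the Poisson integral. Let $u(x,t):=(P_t*f)(x)$ and $v(x,t):=(P_t*R_jf)(x)$ denote the Poisson extensions of $f$ and $R_jf$. By Theorem~\ref{theorem trace} (applied first to $f\in C_c^\infty(\mathbb{R}^n)$ and extended by density), one has $\|\nabla^2u\|_{L^1(\mathbb{R}_+^{n+1})}\lesssim|f|_{B^{1,1}(\mathbb{R}^n)}$. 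Since the Riesz transform commutes with convolution in the trace variable and $R_j(\partial_tP_t)=\partial_{x_j}P_t$ by Proposition~\ref{proposition riesz even},
\[
\partial_tv=(\partial_tP_t)*R_jf=R_j(\partial_tP_t)*f=\partial_{x_j}u.
\]

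Differentiating once more in $t$ yields $\partial_t^2v=\partial_t\partial_{x_j}u$, and hence
\[
\int_{\mathbb{R}_+^{n+1}}|\partial_t^2v|\,dx\,dt=\int_{\mathbb{R}_+^{n+1}}|\partial_t\partial_{x_j}u|\,dx\,dt\leq\|\nabla^2u\|_{L^1(\mathbb{R}_+^{n+1})}\lesssim|f|_{B^{1,1}(\mathbb{R}^n)}.
\]
To finish, I would invoke the classical Taibleson equivalence \cite{taibleson1964},
\[
|g|_{B^{1,1}(\mathbb{R}^n)}\sim\int_{\mathbb{R}_+^{n+1}}\bigl|\partial_t^2(P_t*g)(x,t)\bigr|\,dx\,dt\qquad (g\in B^{1,1}(\mathbb{R}^n)),
\]
applied with $g=R_jf$, to conclude $|R_jf|_{B^{1,1}}\lesssim\|\partial_t^2v\|_{L^1(\mathbb{R}_+^{n+1})}\lesssim|f|_{B^{1,1}}$.

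The main obstacle is the reverse direction of Taibleson's characterization. The forward inequality $\|\partial_t^2(P_t*g)\|_{L^1}\lesssim|g|_{B^{1,1}}$ is immediate from Theorem~\ref{theorem trace}, since $|\partial_t^2(P_t*g)|\leq|\nabla^2(P_t*g)|$. The reverse inequality $|g|_{B^{1,1}}\lesssim\|\partial_t^2(P_t*g)\|_{L^1}$ is the delicate step and requires a separate classical argument. One route is to begin from the representation $g(x)=\int_0^\infty s\,\partial_t^2(P_t*g)(x,s)\,ds$ obtained via two integrations by parts (with vanishing boundary terms at $0$ and $\infty$ under standard decay assumptions), and then to estimate the second differences of $g$ by splitting the $s$-integral at $s=|h|$ and exploiting the semigroup identity $P_{s+t}=P_s*P_t$ together with Young's inequality; alternatively, the characterization can be cited directly from Taibleson's \cite[Theorem~1]{taibleson1964}.
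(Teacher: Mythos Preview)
Your argument is correct and shares the paper's overall architecture---both proofs hinge on the Poisson extension, the identity $R_j(\partial_t P_t)=\partial_{x_j}P_t$ from Proposition~\ref{proposition riesz even}, and the lifting estimate of Theorem~\ref{theorem trace}---but your chain is shorter. The paper starts from the trace inequality $|R_j f_\epsilon|_{B^{1,1}}\lesssim\int|\nabla^2 v|$ (the full Hessian, via \cite[Theorem~18.57]{leoni-book-sobolev}); since not every entry of $\nabla^2 v$ carries a $\partial_t$, it must insert one by writing $\nabla^2 v=-\int_t^\infty\partial_s\nabla^2 v\,ds$ and applying Hardy's inequality, then swap $\partial_t$ for $\partial_{x_j}$ via Proposition~\ref{proposition riesz even}, and finally strip the extra $\partial_{x_j}$ using Taibleson's \cite[Lemma~4(b)]{taibleson1964}. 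You instead invoke the finer characterization $|g|_{B^{1,1}}\lesssim\int|\partial_t^2(P_t\ast g)|$ directly; since $\partial_t^2 v=\partial_t\partial_{x_j}u$ is already a single entry of $\nabla^2 u$, Theorem~\ref{theorem trace} closes the loop immediately, bypassing the Hardy/Lemma~4(b) detour. The trade-off is that you rely on the $\partial_t^2$-only form of Taibleson's equivalence, which you rightly flag as the step requiring justification; the paper's route uses only the full-$\nabla^2$ trace inequality, which is perhaps more readily citable, at the cost of two extra reductions. One technical point worth tightening: the paper is careful to work with mollifications $f_\epsilon$ throughout (so that $R_j f_\epsilon$ is smooth and all intermediate objects are well-defined) and to pass to the limit via Fatou's lemma at the end; you invoke density for Theorem~\ref{theorem trace} but apply the reverse Taibleson inequality directly to $g=R_jf$, which is not yet known to lie in $B^{1,1}$---carrying the mollification through to the last step, or appealing explicitly to Remark~\ref{remark well-defined} to justify the Poisson-integral representation of $R_jf$, would make your argument airtight.
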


\begin{remark}
\label{remark well-defined}We observe that if $f\in B^{1,1}(\mathbb{R}^{n})$,
then $f\in W^{1,1}(\mathbb{R}^{n})$ (see \cite[Theorem 17.66]%
{leoni-book-sobolev}). If $n=1$, this implies that $f\in L^{1}(\mathbb{R})\cap
L^{\infty}(\mathbb{R})$, and in turn, $f\in L^{p}(\mathbb{R})$ for all $1\leq
p\leq\infty$. On the other hand, if $n\geq2$, then by the
Sobolev--Gagliardo--Nirenberg embedding theorem, we have $f\in L^{n/(n-1)}%
(\mathbb{R}^{n})$. In both cases, the Riesz transform of $f$ is well-defined.
\end{remark}

Theorem \ref{theorem riesz} is well-known and its classical proof makes use of the
Littlewood--Paley theory (see, e.g., \cite{sawano2020} or \cite[Section
5.2.2]{triebel-book2010}).  The simple proof proceeds as follows.

\begin{proof}[Proof of Theorem \ref{theorem riesz}]
Let $f_\epsilon := f \ast \rho_\epsilon$ for standard mollifiers $\rho_\epsilon$.  Then using the relation (which can be argued using Remark \ref{remark well-defined}, for example)
\begin{align*}
R_{j}(f_\epsilon) \equiv \rho_\epsilon \ast R_{j}(f),
\end{align*}
one observes that $R_{j}(f_\epsilon)$ is a smooth function.  Therefore \cite[Theorem 18.57 on p.~630]{leoni-book-sobolev} gives the inequality
\begin{align*}
    |R_{j}(f_\epsilon)|_{B^{1,1}(\mathbb{R}^{n})} &\lesssim \int_{\mathbb{R}_{+}^{n+1}}|\nabla^{2} P_t\ast R_j(f_\epsilon)| \, dxdt.
\end{align*}
Next observe that for every $s \geq t>0$, $ \nabla^{2} P_s\ast R_j(f_\epsilon)$ is a harmonic function such that
\begin{align*}
\int_{\mathbb{R}^n} |\nabla^{2} P_s\ast R_j(f_\epsilon)| \;dx \leq C_t.
\end{align*}
As a result Stein and Weiss's \cite[Theorem 2.6 on p.~51]{SteinWeiss} gives the bound
\begin{align*}
\|\nabla^{2} P_s\ast R_j(f_\epsilon)\|_{L^\infty(\mathbb{R}^n)} \leq \frac{C'_t}{s^n}.
\end{align*}
Thus one can apply the fundamental theorem of calculus to obtain
\begin{align*}
\nabla^{2} P_t\ast R_j(f_\epsilon) = -\int_t^\infty \frac{\partial }{\partial t} \nabla^{2} P_s\ast R_j(f_\epsilon)\;ds,
\end{align*}
which in combination with Hardy's inequality \cite[equation (2.3) in Proposition 2.1 on p.~358]{mironescu-russ-2015} and Proposition \ref{proposition riesz even} yields 
\begin{align*}
    \int_{\mathbb{R}_{+}^{n+1}}|\nabla^{2} P_t\ast R_j(f_\epsilon)| \, dxdt &\lesssim \int_{\mathbb{R}_{+}^{n+1}} t\left|\nabla^{2} \frac{\partial P_t}{\partial t} \ast R_j(f_\epsilon)\right|\, dxdt\\
     &= \int_{\mathbb{R}_{+}^{n+1}} t\left|\nabla^{2} \frac{\partial P_t}{\partial x_j} \ast f_\epsilon\right|\, dxdt.
\end{align*}
Next Taibleson's \cite[Lemma 4(b) on p.~419]{taibleson1964} and the argument presented in the introduction give
\begin{align*}
\int_{\mathbb{R}_{+}^{n+1}} t\left|\nabla^{2} \frac{\partial P_t}{\partial x_j} \ast f_\epsilon\right| dxdt &\lesssim \int_{\mathbb{R}_{+}^{n+1}} |\nabla^{2} P_t \ast f_\epsilon| \,dxdt \\
&\lesssim  |f_\epsilon|_{B^{1,1}(\mathbb{R}^{n})}.
\end{align*}
These inequalities, the definition of the semi-norm on $B^{1,1}(\mathbb{R}^{n})$, and two change of variables yields
\begin{align*}
    |\rho_\epsilon \ast R_{j}(f)|_{B^{1,1}(\mathbb{R}^{n})} &\lesssim |f_\epsilon|_{B^{1,1}(\mathbb{R}^{n})} \\
    &\leq |f|_{B^{1,1}(\mathbb{R}^{n})},
\end{align*}
so that the claim follows from sending $\epsilon \to 0$ and using Fatou's lemma.
\end{proof}

Conversely, taking for granted that the Riesz transforms are bounded on the Besov spaces, in place of Taibleson's argument in Step 2 in the proof of Theorem \ref{theorem trace}, one has

\begin{proof}[Second proof of Theorem \ref{theorem trace}]

\textbf{Step 2$'$: } To estimate the $L^1$ norm of $\frac{\partial^{2}%
u}{\partial t\partial x_{j}}$, one can alternatively use Proposition \ref{proposition riesz even}. In particular,
by differentiating the equality asserted in the proposition by $x_{j}$, we obtain
\begin{equation}
R_{i}\left(  \frac{\partial^{2}P_{t}}{\partial t\partial x_{j}}\right)
=\frac{\partial^{2}P_{t}}{\partial x_{i}\partial x_{j}}. \label{101}%
\end{equation}
This relation, in combination with Proposition \ref{proposition product riesz}%
, yields the identity
\begin{align*}
\frac{\partial^{2}u}{\partial t\partial x_{j}}(x,t)  &  =\int_{\mathbb{R}^{n}%
}\sum_{i}R_{i}\left(  \frac{\partial^{2}P_{t}}{\partial t\partial x_{j}%
}\right)  (h)R_{i}(f)(x-h)\,dh\\
&  =\int_{\mathbb{R}^{n}}\sum_{i}\frac{\partial^{2}P_{t}}{\partial
x_{i}\partial x_{j}}(h)R_{i}(f)(x-h)\,dh.
\end{align*}
An estimate for this mixed partial derivative of $u$ can therefore be made by
the same argument in Step 1 of the proof of Theorem \ref{theorem trace B11}, which results in the estimate
\begin{align*}
\int_{\mathbb{R}_{+}^{n+1}}  &  \left\vert \frac{\partial^{2}%
u}{\partial t\partial x_{j}}(x,t)\right\vert \,dxdt\\
&  \lesssim\sum_{i}\int_{\mathbb{R}^{n}}\int_{\mathbb{R}^{n}}\frac
{|R_{i}(f)(x+h)+R_{i}(f)(x-h)-2R_{i}(f)(x)|}{|h|^{n+1}}\,dhdx.
\end{align*}
Finally, by Theorem \ref{theorem riesz}, the right-hand side is bounded from
above by $|f|_{B^{1,1}(\mathbb{R}^{n})}$, up to a multiplicative constant.
\end{proof}

The following is the  weighted, higher-order version of Theorem \ref{theorem poisson}:

\begin{theorem}
\label{theorem poisson} Let $m\in\mathbb{N}_{0}$ and $-1<a<m$. Suppose
that $f\in B^{m-a,1}(\mathbb{R}^{n})$ and let $u$ be given by
defined in \eqref{function F harmonic}. Then, one has
\[
\int_{\mathbb{R}_{+}^{n+1}}t^{a}|\nabla^{m+1}u(x,t)|\;dtdx\lesssim
\vert f\vert _{B^{m-a,1}(\mathbb{R}^{n})}.
\]
\end{theorem}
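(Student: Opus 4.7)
The strategy is to adapt the proof of Theorem \ref{theorem higher order inhomogeneous} by replacing the Gaussian $W_t$ with the Poisson kernel $P_t$, using the harmonic-extension machinery from Section \ref{section poisson} to handle $t$-derivatives. The preliminary ingredient is a Poisson analog of Lemma \ref{lemma heat integral}: for every multi-index $\alpha \in \mathbb{N}_0^n$ and every $b$ with $-2 < b < n + |\alpha| - 1$,
\[
\int_0^\infty t^b \left|\frac{\partial^\alpha P_t}{\partial x^\alpha}(h)\right|\,dt \lesssim \frac{1}{|h|^{n+|\alpha|-b-1}}, \qquad \int_{\mathbb{R}^n} \left|\frac{\partial^\alpha P_t}{\partial x^\alpha}(x)\right|\,dx \lesssim \frac{1}{t^{|\alpha|}}.
\]
The proof is identical to that of Lemma \ref{lemma heat integral}; the only new constraint $b > -2$ is forced by the polynomial (rather than Gaussian) decay $|\partial^\alpha P(y)| \lesssim |y|^{-(n+1+|\alpha|)}$ at infinity, and a quick check shows that the values of $b$ appearing in the remainder of the argument all lie in $(-2, n + |\alpha| - 1)$.

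Next, I would reduce each mixed derivative $\partial_x^\beta \partial_t^l u$ with $|\beta| + l = m + 1$ to an expression involving only $x$-derivatives by iterated use of harmonicity $\partial_t^2 P_t = -\Delta_x P_t$. When $l = 2r$ is even, $\partial_x^\beta \partial_t^l u$ is a linear combination of $\partial_x^\gamma u$ with $|\gamma| = m + 1$. When $l = 2r+1$ is odd, one is left with a single $\partial_t$, which is handled by combining Proposition \ref{proposition riesz even} with the identity $\sum_j R_j^2 = -I$ to obtain
\[
\partial_t P_t = -\sum_{j=1}^n R_j\bigl(\partial_{x_j} P_t\bigr),
\]
and hence, since the Riesz transforms commute with convolution,
\[
\partial_t u(x,t) = -\sum_{j=1}^n \partial_{x_j} P_t \ast R_j(f)(x).
\]
In either parity of $l$, it therefore suffices to estimate $\|t^a \partial_x^\gamma (P_t \ast g)\|_{L^1(\mathbb{R}_+^{n+1})}$ for $|\gamma| = m + 1$ and $g \in \{f, R_1 f, \ldots, R_n f\}$. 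Since the Riesz transforms are bounded on $B^{m-a,1}(\mathbb{R}^n)$ (which follows from Theorem \ref{theorem riesz} applied to a sufficient number of derivatives of $f$, combined with the classical boundedness on $\dot B^{s,1}$ for $s \in (0,1)$, or alternatively by Littlewood--Paley theory), we have $|R_j f|_{B^{m-a,1}} \lesssim |f|_{B^{m-a,1}}$, so the problem reduces to the case $g = f$.

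The final step is to apply the Uspenskii-type argument of Theorem \ref{theorem higher order inhomogeneous} verbatim, with $W_t$ replaced by $P_t$. For $s := m - a = k \in \mathbb{N}$, the Poisson semigroup $P_t = P_{t/2} \ast P_{t/2}$ factors $\partial_x^\gamma P_t$ as $\partial^{\gamma'} P_{t/2} \ast \partial^2_{x_i x_j} P_{t/2}$ with $|\gamma'| = m - k$, and the ansatz \eqref{usp} applied to the even, mean-zero second derivative introduces second-order differences $\Delta_h^2 \partial_x^\alpha f$ with $|\alpha| = k - 1$. For non-integer $s$, the mean-zero property of $\partial^\gamma P_t$ (for $|\gamma| \geq 1$) introduces first-order differences $\Delta_{-h} \partial_x^\alpha f$ with $|\alpha| = \lfloor s \rfloor$. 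The Poisson analog of Lemma \ref{lemma heat integral} then yields the kernels $|h|^{-(n+1)}$ or $|h|^{-(n+s-k)}$ upon integration in $t$, matching the seminorm $|f|_{B^{m-a,1}(\mathbb{R}^n)}$. The principal obstacle is the odd-$l$ case, since the paper establishes boundedness of Riesz transforms on $B^{s,1}(\mathbb{R}^n)$ only for $s = 1$; one therefore either invokes the standard Littlewood--Paley result for general $s > 0$, or---following the remark at the end of Section \ref{section poisson}---replaces the Riesz-transform step by a weighted version of Taibleson's estimate controlling mixed $x$-$t$ derivatives by pure $x$-derivative norms, thereby sidestepping the need for $R_j$ altogether.
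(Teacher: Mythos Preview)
Your proposal is correct, but it follows a different path from the paper's own argument, and the comparison is instructive.

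The paper does \emph{not} use the semigroup factorization $P_t = P_{t/2}\ast P_{t/2}$ at all. Instead, after using harmonicity to reduce to at most one $t$-derivative, it invokes Taibleson's theorem \cite[Theorem~1 on p.~420]{taibleson1964} to adjust the \emph{parity} of the number of $x$-derivatives: depending on whether $m-k+1$ is even or odd, one arranges things so that the kernel hitting $\partial^\alpha f$ is either $\partial_t\partial_x^\delta P_t$ with $|\delta|$ even, or $\partial_x^{\delta'}P_t$ with $|\delta'|$ even. In either case the kernel is an even function of $x$ with mean zero, and the Uspenski\u{\i} ansatz applies directly---no factoring into two Poisson pieces is needed. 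Your approach, by contrast, mirrors the Gaussian proof of Theorem~\ref{theorem higher order inhomogeneous} more closely: you always peel off a fixed second-order piece $\partial_{x_ix_j}^2 P_{t/2}$ via the semigroup, regardless of parity.

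For the single leftover $t$-derivative, the paper's primary proof uses Taibleson directly, whereas you use the Riesz-transform identity $\partial_t P_t = -\sum_j R_j(\partial_{x_j}P_t)$ to push the problem onto $R_j f$. This is precisely the ``second proof'' the paper alludes to in the Remark immediately following the proof of Theorem~\ref{theorem poisson}. The cost of your route is that you need boundedness of $R_j$ on $B^{m-a,1}(\mathbb{R}^n)$ for all admissible $m-a$; the paper establishes this only for $s=1$ (Theorem~\ref{theorem riesz}), so the non-integer case forces you either to import the Littlewood--Paley result from outside or to fall back on Taibleson---at which point you have essentially reproduced the paper's argument. The paper's approach is thus more self-contained, while yours has the virtue of making the parallel with the Gaussian case transparent.
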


\begin{proof}
Let $s:=m-a>0$ and $f\in  B^{s,1}(\mathbb{R}^{n})$.

\textbf{Step 1: }Assume that $s=k\in\mathbb{N}$ and that $f\in C^\infty_c(\mathbb{R}^n)$. Then for every $\alpha
\in\mathbb{N}_{0}^{n}$ with $|\alpha|=k-1$, we have that $\frac{\partial
^{\alpha}f}{\partial x^{\alpha}}\in C^\infty_c(\mathbb{R}^n)$. We claim that properties of harmonic functions and Taibleson's results allow the reduction to a single estimate which depends on the parity of $m-k+1$:  When $m-k+1$ is even, we show that it suffices to prove that
\begin{equation}
\int_{\mathbb{R}_{+}^{n+1}}t^{a}\left\vert \frac{\partial}{\partial t}%
\frac{\partial^{\beta}u}{\partial x^{\beta}}(x,t)\right\vert \;dtdx\lesssim
|f|_{B^{k,1}(\mathbb{R}^{n})} \label{one y derivative}%
\end{equation}
for any multi-index $\beta\in\mathbb{N}_{0}^{n}$ such that $|\beta|=m$, while when $m-k+1$ is odd we show instead it suffices to prove that
\begin{equation}
\int_{\mathbb{R}_{+}^{n+1}}t^{a}\left\vert \frac{\partial^{\beta^{\prime}}%
u}{\partial x^{\beta^{\prime}}}(x,t)\right\vert \;dtdx\lesssim|f|_{B^{k,1}%
(\mathbb{R}^{n})} \label{no y derivative}%
\end{equation}
for any multi-index $\beta^{\prime}\in\mathbb{N}_{0}^{n}$ such that
$|\beta^{\prime}|=m+1$.

Indeed, any entries of the tensor $\nabla^{m+1}u(x,t)$
has either an odd or even number of derivatives in the normal variable $t$.
Therefore iteration of the relation
\[
\frac{\partial^{2}u}{\partial t^{2}}=-\sum_{i=1}^{n}\frac{\partial^{2}%
u}{\partial x_{i}^{2}}%
\]
reduces the estimate to the case where there are either zero or one derivatives in $t$.  In either case, Taibleson's Theorem \cite[Theorem 1 on p.~420]{taibleson1964} allows to correct the final parity of the number of derivatives in the trace variable:  For $m-k+1$ even, if there is no derivative in $t$ one applies 
\cite[Theorem 1 (a) on p.~420]{taibleson1964} to interchange a derivative in some $x_j$ for a derivative in $t$, or leaves the quantity unchanged if there is one derivative in $t$, which reduces the estimate to the proof of the inequality \eqref{one y derivative};  If $m-k+1$ is odd and there are no derivatives in $t$ one leaves the quantity unchanged, or if there is one derivative in $t$, one applies \cite[Theorem 1 (b) on p.~420]{taibleson1964} to interchange a derivative in some $x_j$ for a derivative in $t$, which reduces the estimate to the proof of the inequality \eqref{no y derivative}.

To show the estimate \eqref{one y derivative} for $m-k+1$ even, let $\beta=\gamma+\delta$ be a
decomposition of the multi-index $\beta$ with $|\gamma|=k-1\geq0$ and
$|\delta|=m-k+1=a+1\geq1$. Then
\begin{equation}
\frac{\partial}{\partial t}\frac{\partial^{\beta}u}{\partial x^{\beta}%
}(x,t)=\left(  \frac{\partial}{\partial t}\frac{\partial^{\delta}P_{t}%
}{\partial x^{\delta}}\ast\frac{\partial^{\gamma}f}{\partial x^{\gamma}%
}\right)  (x).\label{m10}%
\end{equation}
As $|\delta|=m-k+1$ is even, a repetition of the argument in Theorem
\ref{theorem trace} with the even function
\[
\frac{\partial}{\partial t}\frac{\partial^{\delta}P_{t}}{\partial x^{\delta}}%
\]
in place of the mixed second partial derivatives of the Poisson kernel leads
one to the desired bound, using the fact that (see the proof of Lemma \ref{lemma heat integral})
\[
\int_{0}^{\infty}t^{a}\left\vert \frac{\partial}{\partial t}\frac
{\partial^{\delta}P_{t}}{\partial x^{\delta}}(h)\right\vert \;dt\lesssim
\frac{1}{|h|^{n+1}}.
\]

Similarly, for the case \eqref{no y derivative}, let $\beta^{\prime}%
=\gamma^{\prime}+\delta^{\prime}$ be a decomposition of the multi-index
$\beta^{\prime}$ with $|\gamma^{\prime}|=k-1\geq0$ and $|\delta^{\prime
}|=m-k+2=a+2\geq2$,
\begin{equation}
\frac{\partial^{\beta^{\prime}}u}{\partial x^{\beta^{\prime}}}(x,t)=\left(
\frac{\partial^{\delta^{\prime}}P_{t}}{\partial x^{\delta^{\prime}}}\ast
\frac{\partial^{\gamma^{\prime}}f}{\partial x^{\gamma^{\prime}}}\right)
(x).\label{m11}%
\end{equation}
As in this case $|\delta^{\prime}|=m-k+1+1$ is even, the argument is as before, where one uses
\[
\int_{0}^{\infty}t^{a}\left\vert \frac{\partial^{\delta^{\prime}}P_{t}%
}{\partial x^{\delta^{\prime}}}(h)\right\vert \;dt\lesssim\frac{1}{|h|^{n+1}}.
\]
A standard density argument in $B^{s,1}(\mathbb{R}^n)$ allows one to remove the
additional hypothesis that $f\in C^{\infty}_c(\mathbb{R}^{n})$. We omit the details.

\textbf{Step 2: }Assume that $s\notin\mathbb{N}$ and let $k:=\lfloor s\rfloor
$. Then for every $\alpha\in\mathbb{N}_{0}^{n}$ with $|\alpha|=k$, we have
that $\frac{\partial^{\alpha}f}{\partial x^{\alpha}}\in\dot{B}^{s-k,1}%
(\mathbb{R}^{n})$. As in Step 1, to estimate the $L^{1}$ norm of $t^{a}%
\nabla^{m+1}u$, it suffices to prove the estimates \eqref{one y derivative}
and \eqref{no y derivative}. To show the estimate \eqref{one y derivative}, we
use \eqref{m10} but now with $|\gamma|=k\geq0$ and $|\delta|=m-k\geq1$. Since
$\int_{\mathbb{R}^{n}}\frac{\partial}{\partial t}\frac{\partial^{\delta}P_{t}%
}{\partial x^{\delta}}(h)\,dh=0$, we can write%
\[
\left(  \frac{\partial}{\partial t}\frac{\partial^{\delta}P_{t}}{\partial
x^{\delta}}\ast\frac{\partial^{\gamma}f}{\partial x^{\gamma}}\right)
(x)=\int_{\mathbb{R}^{n}}\frac{\partial}{\partial t}\frac{\partial^{\delta
}P_{t}}{\partial x^{\delta}}(h)\Delta_{-h}\frac{\partial^{\alpha}f}{\partial
x^{\alpha}}(x)\,dh.
\]
Multiplying both sides by $t^{a}$ and integrating in $(x,t)$ gives%
\[
\int_{\mathbb{R}_{+}^{n+1}}t^{a}\left\vert \left(  \frac{\partial
}{\partial t}\frac{\partial^{\delta}P_{t}}{\partial x^{\delta}}\ast
\frac{\partial^{\alpha}f}{\partial x^{\alpha}}\right)  (x)\right\vert
\,dxdt\leq\int_{\mathbb{R}^{n}}\int_{\mathbb{R}^{n}}\int_{0}^{\infty}%
t^{a}\left\vert \frac{\partial}{\partial t}\frac{\partial^{\delta}P_{t}%
}{\partial x^{\delta}}(h)\right\vert \left\vert \Delta_{-h}\frac
{\partial^{\alpha}f}{\partial x^{\alpha}}(x)\right\vert \,dtdhdx,
\]
where we used Tonelli's theorem. As in the proof of Lemma \ref{lemma heat integral}, we have
\[
\int_{0}^{\infty}t^{a}\left\vert \frac{\partial}{\partial t}\frac
{\partial^{\delta}P_{t}}{\partial x^{\delta}}(h)\right\vert \,dt\lesssim
\frac{1}{|h|^{n+|\delta|-a}}=\frac{1}{|h|^{n+s-k}}.
\]
In turn,%
\[
\int_{\mathbb{R}_{+}^{n+1}}t^{a}\left\vert \left(  \frac{\partial
}{\partial t}\frac{\partial^{\delta}P_{t}}{\partial x^{\delta}}\ast
\frac{\partial^{\alpha}f}{\partial x^{\alpha}}\right)  (x)\right\vert
dxdt\lesssim\int_{\mathbb{R}^{n}}\int_{\mathbb{R}^{n}}\left\vert \Delta
_{-h}\frac{\partial^{\alpha}f}{\partial x^{\alpha}}(x)\right\vert \frac
{dh}{|h|^{n+s-k}}dx.
\]
Similarly, for the case \eqref{no y derivative}, we use \eqref{m11}, with
$|\gamma^{\prime}|=k\geq0$ and $|\delta^{\prime}|=m+1-k\geq2$, to write%
\[
\left(  \frac{\partial^{\delta^{\prime}}P_{t}}{\partial x^{\delta^{\prime}}%
}\ast\frac{\partial^{\gamma^{\prime}}f}{\partial x^{\gamma^{\prime}}}\right)
(x)=\int_{\mathbb{R}^{n}}\frac{\partial^{\delta^{\prime}}P_{t}}{\partial
x^{\delta^{\prime}}}(h)\Delta_{-h}\frac{\partial^{\gamma^{\prime}}f}{\partial
x^{\gamma^{\prime}}}(x)\,dh.
\]
Since 
\[
\int_{0}^{\infty}t^{a}\left\vert \frac{\partial^{\delta^{\prime}}P_{t}%
}{\partial x^{\delta^{\prime}}}(h)\right\vert \,dt\lesssim\frac
{1}{|h|^{n+|\delta^{\prime}|-a-1}}=\frac{1}{|h|^{n+s-k}},
\]
as before we have that%
\[
\int_{\mathbb{R}_{+}^{n+1}}t^{a}\left\vert \left(  \frac
{\partial^{\delta^{\prime}}P_{t}}{\partial x^{\delta^{\prime}}}\ast
\frac{\partial^{\gamma^{\prime}}f}{\partial x^{\gamma^{\prime}}}\right)
(x)\right\vert dxdt\lesssim\int_{\mathbb{R}^{n}}\int_{\mathbb{R}^{n}}\left\vert
\Delta_{-h}\frac{\partial^{\gamma^{\prime}}f}{\partial x^{\gamma^{\prime}}%
}(x)\right\vert \frac{dh}{|h|^{n+s-k}}dx.
\]
\end{proof}
\begin{remark}
    It is possible to give a second proof of Theorem \ref{theorem poisson}, which makes use of the boundedness of the Riesz transform in $B^{1,1}(\mathbb{R}^n)$, Theorem \ref{theorem riesz}. The idea is similar to the second proof of Theorem \ref{theorem trace}.
\end{remark}

\appendix
\section{Appendix}
In this appendix, we show that when $mp>n$ the homogeneous Sobolev space $\dot{W}^{m,p}(\mathbb{R}^{n})$ is embedded into the homogeneous Besov space (or Zygmund space) $\dot B^{m-n/p,\infty}(\mathbb{R}^{n})$. We also prove that a function in $\dot B^{m-n/p,\infty}(\mathbb{R}^{n})$ has polynomial growth. While the latter result is probably known to experts, we have been unable to find a reference. 

\begin{theorem}\label{theorem growth}
Let $m\in\mathbb{N}$ and $1<p<\infty$ be such that $mp>n$. Then 
\begin{equation}
|u|_{B^{m-n/p,\infty}(\mathbb{R}^{n})}\lesssim\Vert\nabla^{m}_xu\Vert
_{L^{p}(\mathbb{R}^{n})} \label{embedding}%
\end{equation}
for all $u\in\dot{W}^{m,p}(\mathbb{R}^{n})$. Moreover, if $\bar{u}$ is a
representative of $u$, then $\bar{u}$ has polynomial growth.
\end{theorem}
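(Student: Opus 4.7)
The plan is to establish the embedding inequality \eqref{embedding} first, then deduce polynomial growth of a representative.

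For the embedding, set $s := m - n/p \in (0, m)$ and $k := \lfloor s \rfloor + 1$, so $1 \leq k \leq m$. By mollification, I can assume $u \in C^\infty(\mathbb{R}^n)$ with $\nabla^m u \in L^p(\mathbb{R}^n)$. The starting point is the integral representation
\[
\Delta^k_h u(x) = \int_{[0,1]^k} (h \cdot \nabla)^k u\bigl(x + (\sigma_1 + \cdots + \sigma_k)h\bigr)\, d\sigma_1 \cdots d\sigma_k.
\]
Writing $\Delta^k_h = \Delta_h \circ \Delta^{k-1}_h$ and applying this representation to the inner $\Delta^{k-1}_h u$, I obtain the bound $|\Delta^k_h u(x)| \lesssim |h|^{k-1} \sup_y |\Delta_h \nabla^{k-1}u(y)|$. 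Since $\nabla^{k-1} u \in \dot W^{m-k+1, p}(\mathbb{R}^n)$ with $(m-k+1)p > n$ when $s \notin \mathbb{N}$, iterating the Gagliardo--Nirenberg--Sobolev chain into a space $\dot W^{1, q}(\mathbb{R}^n)$ with $q > n$ and applying the global Morrey inequality there yields $|\Delta_h \nabla^{k-1}u(y)| \lesssim |h|^{s-k+1}\|\nabla^m u\|_{L^p}$, which in combination delivers the desired estimate on $\|\Delta^k_h u\|_{L^\infty}$.

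The main obstacle is the borderline integer case $s \in \mathbb{N}$, where $(m-k+1)p = n$ is critical and Morrey's inequality is no longer available. I would handle this by writing $\Delta^k_h = \Delta^2_h \circ \Delta^{k-2}_h$ and estimating $|\Delta^2_h \nabla^{k-2} u(y)|$ instead: since $(m-k+2)p = n + p > n$, the Gagliardo--Nirenberg chain now closes in $\dot W^{2, n}(\mathbb{R}^n)$, and the critical Sobolev--Zygmund embedding $\dot W^{2, n}(\mathbb{R}^n) \hookrightarrow \dot B^{1, \infty}_\infty(\mathbb{R}^n)$---which replaces the unattainable H\"older regularity at the forbidden exponent $1$ by Zygmund regularity---gives $|\Delta^2_h \nabla^{k-2} u(y)| \lesssim |h|\,\|\nabla^m u\|_{L^p}$. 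Combined with the factor $|h|^{k-2}$ from the peeling, this produces the exponent $s = k-1$ on the right-hand side, as required.

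For the polynomial growth, local Sobolev--Morrey (valid since $mp > n$) furnishes a continuous representative $\bar u$ of $u$, and the embedding just established provides $|\Delta^k_h \bar u(x)| \leq C|h|^s$ for all $x, h$. Fixing $\omega \in S^{n-1}$ and setting $g(t) := \bar u(t\omega)$, the function $g \in C(\mathbb{R})$ satisfies $|\Delta^k_h g(t)| \leq C|h|^s$ uniformly in $t, h$. Using the rescaled relation $|\Delta^k_{t/k} g(0)| \leq C' t^s$ together with the recursion
\[
g(t) = \Delta^k_{t/k} g(0) - \sum_{j=0}^{k-1}\binom{k}{j}(-1)^{k-j} g(jt/k),
\]
an iterative argument starting from the continuous (hence bounded) data of $g$ on $[-1, 1]$ yields $|g(t)| \leq C(1 + |t|^N)$ for some $N = N(k, s)$. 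Uniformity in $\omega$ via compactness of $S^{n-1}$ then gives $|\bar u(x)| \leq C(1 + |x|^N)$ globally, the asserted polynomial growth.
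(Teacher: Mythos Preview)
Your argument for the embedding has a genuine gap when $p<n$ (equivalently, when $k=\lfloor s\rfloor+1<m$). The homogeneous Gagliardo--Nirenberg--Sobolev chain $\dot W^{m-k+1,p}\to\cdots\to\dot W^{1,q}$ holds only modulo polynomials: at each step the inequality $\|g-c\|_{L^{p^*}}\lesssim\|\nabla g\|_{L^p}$ produces a constant, and once you have to feed $\nabla^{m-1}u\in L^{p_1}+\text{const}$ back into the next step as a \emph{gradient}, the chain breaks unless that constant vanishes. In effect, after $m-k$ steps you only know that $\nabla^{k-1}u$ agrees with a function having $\nabla\in L^q$ up to a polynomial of degree $\le m-k$, and a single $\Delta_h$ (or $\Delta_h^2$) does not kill such a polynomial when $m-k\ge 2$. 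Your asserted bound $|\Delta_h\nabla^{k-1}u(y)|\lesssim|h|^{s-k+1}\|\nabla^m u\|_{L^p}$ is therefore false in general: take $u(x)=x_1^{k}$ (which lies in $\dot W^{m,p}$ since $k<m$), so that $\|\nabla^m u\|_{L^p}=0$, yet $\Delta_h^k u=k!\,h_1^{k}\not\equiv 0$ and indeed $\sup_{h\neq0}|h|^{-s}\|\Delta_h^k u\|_{L^\infty}=\infty$. The integer case has exactly the same defect: the chain from $\dot W^{m-k+2,p}$ down to $\dot W^{2,n}$ again accumulates polynomial corrections that $\Delta_h^2$ cannot remove once $m-k\ge 2$. (Note also that the embedding $\dot W^{2,n}\hookrightarrow\dot B^{1,\infty}_\infty$ you invoke is precisely the $m=2$, $p=n$ instance of what you are proving, so you should indicate an independent argument for it.)

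The paper avoids this obstruction by working with $m$-th order differences $\Delta_h^m u$, which annihilate all polynomials of degree $<m$, and by an averaging identity that expresses $\Delta_h^m u(0)$ directly as an integral of $\nabla^m u$ against an explicit kernel, estimated by a single application of H\"older's inequality; no intermediate homogeneous Sobolev embeddings are needed. Your recursion for the polynomial growth is sound in spirit and close to the paper's dyadic argument, but it currently rests on the embedding you have not established for $p<n$.
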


\begin{proof}[First proof]
\textbf{Step 1:} Assume that $u\in C^{\infty}(\mathbb{R}^{n})$. For every
$x,h,y\in\mathbb{R}^{n}$ with $h\neq0$, we use the identity%
\[
\Delta_{h}^{m}u(0)=\sum_{k=1}^{m}(-1)^{m-k}\tbinom{m}{k}\Delta_{(k/m)y}%
^{m}u((m-k)h)-(-1)^{m}\Delta_{h-(k/m)y}^{m}u(ky),
\]
which can be proved using the binomial theorem (see the proof of Lemma 17.22
on p. 549 in \cite{leoni-book-sobolev}). Let $r=|h|$. Averaging in $y$ over
$Q_{r}$ gives%
\begin{align*}
|\Delta_{h}^{m}u(0)| &  \leq\sum_{k=1}^{m}\tbinom{m}{k}\frac{1}{r^{n}}%
\int_{Q_{r}}|\Delta_{(k/m)y}^{m}u((m-k)h)|\,dy\\
&  \quad+\frac{1}{r^{n}}\int_{Q_{r}}|\Delta_{h-(k/m)y}^{m}u(ky)|\,dy:=\sum
_{k=1}^{m}\tbinom{m}{k}\mathcal{A}_{k}+\mathcal{B}.
\end{align*}
By the fundamental theorem of calculus and an induction argument,%
\[
\Delta_{(k/m)y}^{m}u((m-k)h)=\sum_{|\alpha|=m}((k/m)y)^{\alpha}\int%
_{(0,1)^{m}}\frac{\partial^{\alpha}u}{\partial x^\alpha}((m-k)h+(t_{1}+\cdots+t_{m})(k/m)y)\,dt.
\]
Hence,%
\begin{align*}
|\mathcal{A}_{k}| &  \lesssim\frac{1}{r^{n}}\int_{(0,1)^{m}}\int_{Q_{r}}%
|y|^{m}|\nabla^{m}_xu((m-k)h+(t_{1}+\cdots+t_{m})(k/m)y)|\,dydt\\
&  \lesssim\frac{1}{r^{n}}\int_{(0,1)^{m}}\frac{1}{(t_{1}+\cdots+t_{m})^{n+m}}%
\int_{Q_{(t_{1}+\cdots+t_{m})(k/m)r}}|z|^{m}|\nabla^{m}_xu((m-k)h+z)|\,dzdt\\
&  \lesssim\frac{1}{r^{n}}\int_{B_{m}(0,\sqrt{m})}\frac{1}{|t|_{m}^{n+m}}%
\int_{Q_{|t|_{m}kr}}|z|^{m}|\nabla^{m}_xu((m-k)h+z)|\,dzdt
\end{align*}
where we made the change of variables $z=(t_{1}+\cdots+t_{m})(k/m)y$,
$dz=[(t_{1}+\cdots+t_{m})(k/m)^{n}dy$ and used the fact that $\sqrt{m}%
|t|_{m}\leq t_{1}+\cdots+t_{m}\leq m|t|_{m}$. Using Tonelli's theorem on the
right-hand side, we obtain%
\[
|\mathcal{A}_{k}|\lesssim\frac{1}{r^{n}}\int_{Q_{\sqrt{m}kr}}|z|^{m}|\nabla
^{m}_xu((m-k)h+z)|\left(  \int_{E_{z}}\frac{1}{|t|_{m}^{n+m}}\,dt\right)  dz,
\]
where
\[
E_{z}:=\{t\in B_{m}(0,\sqrt{m}):\,|z|<k\sqrt{n}|t|_{m}r\}.
\]
By the change of variables $t=\frac{|z|}{r}\xi$, we have
\[
\int_{E_{z}}\frac{1}{|t|_{m}^{n+m}}\,dt\leq\frac{r^{n}}{|z|^{n}}%
\int_{\mathbb{R}^{m}\setminus B_{m}(0,1/(k\sqrt{n}))}\frac{1}{|\xi|_{m}^{n+m}%
}\,d\xi\lesssim\frac{r^{n}}{|z|^{n}}.
\]
Hence, also, by H\"{o}lder's inequality%
\begin{align*}
|\mathcal{A}_{k}| &  \lesssim\int_{Q_{\sqrt{m}kr}}|z|^{m-n}|\nabla
^{m}_xu((m-k)h+z)|\,dz\\
&  \lesssim\left(  \int_{Q_{\sqrt{m}kr}}|z|^{(m-n)p^{\prime}}dz\right)
^{1/p^{\prime}}\left(  \int_{Q_{\sqrt{m}kr}}|\nabla^{m}_xu((m-k)h+z)|^{p}%
dz\right)  ^{1/p}\\
&  \lesssim r^{m-n/p}\Vert\nabla^{m}_xu\Vert_{L^{p}(Q_{cr})},
\end{align*}
since
\begin{align*}
\int_{\sqrt{m}kr}|z|^{(m-n)p^{\prime}}dz &  \leq\int_{B(0,\sqrt{nm}%
r)}|z|^{(m-n)p^{\prime}}dz=\beta_{n}\int_{0}^{\sqrt{nm}r}r^{n-1+(m-n)p^{\prime
}}dr\\
&  \lesssim r^{n+(m-n)p^{\prime}}%
\end{align*}
and $n+(m-n)p^{\prime}>0$ because\footnote{$N(p-1)+(m-N)p=mp-N>0$} $mp>n$.

The term $\mathcal{B}$ can be treated in a similar way. We omit the details.
In conclusion, we have shown that%
\[
|\Delta_{h}^{m}u(0)|\lesssim|h|^{m-n/p}\Vert\nabla^{m}_xu\Vert_{L^{p}%
(\mathbb{R}^{n})}.
\]
By a translation, it follows that
\[
|\Delta_{h}^{m}u(x)|\lesssim|h|^{m-n/p}\Vert\nabla^{m}_xu\Vert_{L^{p}%
(\mathbb{R}^{n})}%
\]
for all $x,h\in\mathbb{R}^{n}$ with $h\neq0$. This shows that%
\[
|u|_{B^{m-n/p,\infty}(\mathbb{R}^{n})}\lesssim\Vert\nabla^{m}_xu\Vert
_{L^{p}(\mathbb{R}^{n})}.
\]
The additional hypothesis that $u\in C^{\infty}(\mathbb{R}^{n})$ can be
removed using a mollification argument. Finally, observe that if $u\in\dot
{W}^{m,p}(\mathbb{R}^{n})$ and $\varphi\in C_{c}^{\infty}(\mathbb{R}^{n})$,
then $u\varphi\in W^{m,p}(\mathbb{R}^{n})$, and so by \cite[Theorem 12.46, p.
378]{leoni-book-sobolev}, $u\varphi\in L^{\infty}(\mathbb{R}^{n})$. In
particular, $u\in L_{\operatorname*{loc}}^{\infty}(\mathbb{R}^{n})$. Hence,
$u\in\dot{B}^{m-n/p,\infty}(\mathbb{R}^{n})$.

\textbf{Step 2:} It remains to show that a function in $\dot{B}^{m-n/p,\infty
}(\mathbb{R}^{n})$ has polynomial growth. Without loss of generality, we may
assume that $m-n/p\leq1$. Indeed, if $m-n/p>1$, let $\ell\in\mathbb{N}$ be
such that $\ell\leq\max\{i\in\mathbb{N}:~i<m-n/p\}$. Then by \cite[Theorem
17.69, p. 575]{leoni-book-sobolev}, $u$ belongs to $\dot{B}^{m-n/p,\infty
}(\mathbb{R}^{n})$ if and only if for every $\alpha\in\mathbb{N}_{0}^{n}$ with
$|\alpha|=\ell$, the weak derivative $\partial^{\alpha}u$ belongs to $\dot
{B}^{m-n/p-\ell,\infty}(\mathbb{R}^{n})$. Moreover, if $\nabla^{\ell}u$ has
polynomial growth, then so does $u$. Thus, in what follows we assume that
$0<m-n/p\leq1$.

If $0<m-n/p<1$, then $u$ has a representative $\bar{u}$ that is H\"{o}lder
continuous with exponent $m-n/p$. In particular, $\bar{u}$ has polynomial growth.

Assume next that $m-n/p=1$. Let $u\in\dot{B}^{1,\infty}(\mathbb{R}^{n})$ and
let $\bar{u}$ be a representative of $u$. Set%
\[
v(h):=\bar{u}(h)-\bar{u}(0),\quad h\in\mathbb{R}^{n}.
\]
Then for every $i\in\mathbb{Z}$ and every $h\in\mathbb{R}^{n}$,
\[
|2^{i}v(2^{-i}h)-2^{i-1}v(2^{-i+1}h)|=2^{i-1}|\Delta_{2^{-i}h}\bar{u}%
(0)|\leq|u|_{B^{1,\infty}(\mathbb{R}^{n})}|h|.
\]
It follows that for every $\ell\in\mathbb{Z}\setminus\{0\}$,
\[
|v(h)-2^{\ell}v(2^{-\ell}h)|\leq\sum_{i=1}^{\ell}|2^{i}v(2^{-i}h)-2^{i-1}%
v(2^{-i+1}h)|\leq|u|_{B^{1,\infty}(\mathbb{R}^{n})}\ell|h|
\]
if $\ell>0$, while
\[
|v(h)-2^{\ell}v(2^{-\ell}h)|\leq\sum_{i=\ell-1}^{0}|2^{i}v(2^{-i}%
h)-2^{i-1}v(2^{-i+1}h)|\leq|u|_{B^{1,\infty}(\mathbb{R}^{n})}|\ell||h|
\]
if $\ell<0$. Hence,
\begin{align*}
|v(2^{-\ell}h)|  &  \leq|v(h)|2^{-\ell}+|v(h)-2^{\ell}v(2^{-\ell}h)|2^{-\ell
}\\
&  \leq|v(h)|2^{-\ell}+|u|_{B^{1,\infty}(\mathbb{R}^{n})}|\ell|2^{-\ell}|h|.
\end{align*}
Given $x\in\mathbb{R}^{n}\setminus\{0\}$, let $\ell\in\mathbb{Z}$ be such that
$2^{-\ell-1}\leq|x|<2^{-\ell}$. Taking $h=2^{\ell}x$ gives%
\begin{equation}
|v(x)|\leq2|v(2^{\ell}x)||x|+\frac{1}{\log2}|u|_{B^{1,\infty}(\mathbb{R}^{n}%
)}|x|(1+|\log|x||). \label{ttt}%
\end{equation}
Hence,
\begin{align*}
|\bar{u}(x)|  &  \leq|\bar{u}(0)|+|v(x)|\\
&  \leq|\bar{u}(0)|+\sup_{B(0,1)}|\bar{u}-\bar{u}(0)||x|+\frac{1}{\log
2}|u|_{B^{1,\infty}(\mathbb{R}^{n})}|x|(1+|\log|x||),
\end{align*}
where we used the fact that $|2^{\ell}x|<1$.
\end{proof}

\begin{remark}
We refer to Peetre \cite[Theorem 8.2]{peetre1966} for the original proof of
Step 1, which relies on interpolation theory and on an identity of the type \eqref{representation} below. Step 2 is adapted from a paper of Krantz
\cite[Lemma 2.8]{krantz1983}. Note that by \eqref{ttt} and a translation,
\[
|\bar{u}(x_{0}+h)-\bar{u}(x_{0})|\leq\sup_{B(x_{0},1)}|\bar{u}-\bar{u}%
(x_{0})||h|+\frac{1}{\log2}|u|_{B^{1,\infty}(\mathbb{R}^{n})}|h|(1+|\log|h||).
\]

\end{remark}

It is possible to provide a shorter proof of the fact that a function in $u\in \dot W^{m,p}(\mathbb{R}^n)$ has polynomial growth. This proof uses a representation result in Mizuta's book \cite[Theorem 1.3 on p.~207]{mizuta-book-1996}, which makes use of the theory of singular integrals.

\begin{proof}[Second proof] 
By \cite[Theorem 1.3 on p.~207]{mizuta-book-1996}, for $v \in \dot W^{m,p}(\mathbb{R}^n)$ there exists a polynomial $P$ of degree at most $m-1$ such that one has for almost every $x \in \mathbb{R}^n$,
\begin{align}\label{representation}
v(x)-P(x) = \sum_{|\lambda| = m} a_\lambda \int k_{\lambda,l}(x,y) \frac{\partial^\lambda v}{\partial y^\lambda}(y)\;dy,
\end{align}
where  $l\leq m-n/p <l+1$  and 
\[
k_{\lambda,l}(x,y):=\left\{
\begin{array}
[c]{ll}%
k_{\lambda}(x-y) & \text{if }|y|<1,\\
k_{\lambda}(x-y)-\sum_{|\alpha|\leq l}\frac{x^{\alpha}}{\alpha!}%
\frac{\partial^{\alpha}k_{\lambda}}{\partial y^\alpha}(-y) & \text{if }|y|\geq1,
\end{array}
\right.%
\]
with $k_{\lambda}(x):=\frac{x^{\lambda}}{|x|^{n}}$. By \cite[Lemma 1.2 on p.~207]{mizuta-book-1996},
\begin{align}
|k_{\lambda,l}(x,y)| \lesssim |x|^{l+1} |y|^{|\lambda|-n-l-1}\label{estimate kernel}
\end{align}
for $|y|\geq 1$ and $|y|>2|x|$.

We claim that 
\begin{align*}
|v(x)-P(x)| \leq |\tilde{P}(x)|
\end{align*}
for some polynomial $\tilde{P}$.  
However, this is between the lines of Mizuta's argument \cite[Proof of Theorem 1.3 on p.~207]{mizuta-book-1996}.  If $|x|\leq 1/2$, one splits the integral into two pieces, uses the bound for $k_\lambda(x-y)$ for the local piece and (\ref{estimate kernel}) for the global piece, and H\"older's inequality to obtain
\begin{align*}
|v(x)-P(x)| &\lesssim \int_{B(0,1)}  |k_{\lambda,l}(x,y)| |\nabla^{m}_y v(y)|\;dy + \int_{\mathbb{R}^n\setminus B(0,1)}  |k_{\lambda,l}(x,y)| |\nabla^{m}_y v(y)|\;dy\\
& \lesssim \int_{B(0,1)}  |x-y|^{|\lambda|-n} |\nabla^{m}_y v(y)|\;dy +  |x|^{l+1} \int_{\mathbb{R}^n\setminus B(0,1)}  |y|^{|\lambda|-n-l-1} |\nabla^{m}_y v(y)|\;dy\\
& \lesssim |P_1(x)|.
\end{align*}
Here one uses that 
\begin{align*}
(|\lambda|-n-l-1)p' +n &=(m-n-l-1)p' +n<0
\end{align*}
because $m-n/p <l+1$.  

When $|x|>1/2$ we use a similar splitting, along with H\"older's inequality
\begin{align*}
|v(x)-P(x)| &\lesssim \int_{B(0, 2|x|)}  |k_{\lambda,l}(x,y)| |\nabla^{m}_y v(y)|\;dy + \int_{\mathbb{R}^n\setminus B(0, 2|x|)}  |k_{\lambda,l}(x,y)| |\nabla^{m}_y v(y)|\;dy\\
&\lesssim \int_{B(0, 2|x|)}  |k_{\lambda}(x-y)| |\nabla^{m}_y v(y)|\;dy +
 | P_2(x)|
\\
& \quad +|x|^{l+1}\int_{\mathbb{R}^n\setminus B(0, 2|x|)}  |y|^{|\lambda|-n-l-1} |\nabla^{m}_y v(y)|\;dy\lesssim |P_3(x)|,
\end{align*}
where we have also made use of Mizuta's observation that for $f\in L^p(\mathbb{R}^n)$,
\begin{align*}
 \int_{B(0, 2|x|)}  k_{\lambda,l}(x,y) f(y)\;dy =  \int_{B(0, 2|x|)}  k_{\lambda}(x-y) f(y)\;dy + \text{ a polynomial.}
\end{align*}
\end{proof}

\section*{Acknowledgements}
The research of G. Leoni was supported by the National Science Foundation
under grant No. DMS-2108784. G. Leoni started working on this problem more
than twenty years ago with Giuseppe Savar\'e and then abandoned it when they
discovered the references \cite[Theorem 1 in Section 10.1]{mazya-book2011} and
\cite[Theorem 3 in Section 5.4]{burenkov-book1998}. He resumed working on it
in 2016 with Irene Fonseca and Maria Giovanna Mora, after realizing the issues
in \cite[Theorem 1 in Section 10.1]{mazya-book2011} and \cite[Theorem 3 in
Section 5.4]{burenkov-book1998}, and abandoned it again after discovering
\cite{mironescu-russ-2015}. D. Spector is supported by the National Science and Technology Council of Taiwan under research grant numbers 110-2115-M-003-020-MY3/113-2115-M-003-017-MY3 and the Taiwan Ministry of Education under the Yushan Fellow Program.  
 The authors would like to thank Bogdan Rai\cb t\v a and Ian Tice for useful conversations on the subject of this paper.  

The authors would like to thank Petru Mironescu for sharing his deep insight into the problem of traces and liftings, which has greatly helped both the presentation and accuracy of our paper.  
%
\bibliographystyle{abbrv}
\bibliography{traces-ref}

\end{document}